\newcommand{\LoadPackagesNow}{}
\newcommand{\LoadPackageLater}[2][]{%
   \g@addto@macro{\LoadPackagesNow}{%
      \usepackage[#1]{#2}%
   }%
}
\g@addto@macro\bfseries{\boldmath}
\numberwithin{equation}{section}
\definecolor{pdfurlcolor}{rgb}{0,0,0.6}
\definecolor{pdffilecolor}{rgb}{0.7,0,0}
\definecolor{pdflinkcolor}{rgb}{0,0,0.6}
\definecolor{pdfcitecolor}{rgb}{0,0,0.6}
\newcommand{\ifargdef}[3][{}]{\ifthenelse{\equal{#2}{}}{#1}{#3}}
\newlength{\hangind}
\newcommand{\myhangindent}[1]{\settowidth{\hangind}{\widthof{#1}}\hangindent=\the\hangind}
\newenvironment{properties}[2][2em]
{\begin{enumerate}[label={\textsc{(#2\arabic*)}},leftmargin=#1]}
{\end{enumerate}}
\newenvironment{listing}
{\begin{itemize}[itemindent=0em,leftmargin=1.2em]}
{\end{itemize}}
\newenvironment{rmklist}
{\begin{enumerate}[label={(\arabic*)},itemindent=2em,leftmargin=0em]}
{\end{enumerate}}
\newenvironment{deflist}
{\begin{enumerate}[label={(\arabic*)}]}
{\end{enumerate}}
\renewcommand{\qedsymbol}{$_\blacksquare$}
\providecommand{\qedhere}{\hfill\qedsymbol}
\newtheoremstyle{claim}
	{\topsep}{\topsep}%
	{\itshape}
	{}
	{}
	{}
	{.5em}
	{{\bfseries\boldmath\thmname{#1} \thmnumber{#2}} \thmnote{(#3)}}
\newtheoremstyle{definition}
	{\topsep}{\topsep}%
	{}
	{}
	{}
	{}
	{.5em}
	{\textbf{\thmname{#1} \thmnumber{#2}} \thmnote{(#3)}}
\newtheoremstyle{algorithm}
	{\topsep}{\topsep}%
	{}
	{}
	{\bfseries\boldmath}
	{}
	{\newline}
	{\thmname{#1} \thmnumber{#2} \thmnote{(#3)}}
\declaretheorem[style=claim,numberwithin=section]{theorem}
\declaretheorem[style=claim,sibling=theorem]{lemma}
\declaretheorem[style=claim,sibling=theorem]{corollary}
\declaretheorem[style=claim,sibling=theorem]{proposition}
\declaretheorem[style=definition,sibling=theorem]{definition}
\declaretheorem[style=definition,sibling=theorem]{assumption}
\declaretheorem[style=definition,sibling=theorem,qed=$\Diamond$]{remark}
\declaretheorem[style=definition,sibling=theorem,qed=$\Diamond$]{example}
\declaretheorem[style=algorithm,sibling=theorem,%
	preheadhook={\begin{mdframed}[style=emphframe] \setcounter{mpfootnote}{\value{footnote}}},%
	postfoothook=\setcounter{footnote}{\value{mpfootnote}}\end{mdframed}]{experiment}
\declaretheorem[style=algorithm,sibling=theorem,%
	preheadhook={\begin{mdframed}[style=emphframe] \setcounter{mpfootnote}{\value{footnote}}},%
	postfoothook=\setcounter{footnote}{\value{mpfootnote}}\end{mdframed}]{algorithm}
\declaretheorem[style=definition,sibling=theorem,%
	preheadhook={\begin{mdframed}[style=boxed] \setcounter{mpfootnote}{\value{footnote}}},%
	postfoothook=\setcounter{footnote}{\value{mpfootnote}}\end{mdframed}]{recipe}
\newcommand{\opleft}[1]{\mathopen{}\left#1}
\newcommand{\opright}[1]{\right#1\mathclose{}}
\newcommandx{\braces}[4]{%
\ifstrequal{#3}{normal}{#1#4#2}{%
\ifstrequal{#3}{auto}{\left#1#4\right#2}{%
\ifstrequal{#3}{opauto}{\opleft#1#4\opright#2}{%
#3#1#4#3#2}}}%
}
\newcommandx{\opannot}[3][3=\downarrow]{\stackrel{\mathclap{\substack{#1 \\ #3 \vspace{2pt}}}}{#2}}
\newcommandx{\lineannot}[3][3=\rightarrow]{\mathllap{\boxed{\text{\textsmaller{#1}}} #3} #2}
\newcommandx{\multilineannot}[4][4=\rightarrow]{\mathllap{\boxed{\parbox{#1}{\RaggedRight\textsmaller{#2}}} #4} #3}
\newcommand{\R}{\mathbb{R}} 
\newcommand{\eps}{\varepsilon} 
\renewcommand{\iff}{\Leftrightarrow} 
\renewcommand{\implies}{\Rightarrow} 
\newcommand{\suchthat}[1][normal]{\ifstrequal{#1}{normal}{\mid}{#1|}} 
\newcommand{\setcompl}[1]{#1^c} 
\newcommand{\cardinality}[1]{\abs{#1}} 
\newcommand{\intersec}{\cap} 
\newcommand{\boundary}[1]{\partial#1} 
\newcommandx{\intvcl}[3][1=normal]{\braces{[}{]}{#1}{#2, #3}} 
\newcommandx{\intvop}[3][1=normal]{\braces{(}{)}{#1}{#2, #3}} 
\newcommandx{\intvclop}[3][1=normal]{\braces{[}{)}{#1}{#2, #3}} 
\newcommandx{\intvopcl}[3][1=normal]{\braces{(}{]}{#1}{#2, #3}} 
\DeclareMathOperator*{\argmin}{argmin} 
\DeclareMathOperator{\sign}{sign}
\newcommandx{\abs}[2][1=normal]{\braces{\lvert}{\rvert}{#1}{#2}} 
\newcommandx{\ceil}[2][1=normal]{\braces{\lceil}{\rceil}{#1}{#2}} 
\newcommandx{\floor}[2][1=normal]{\braces{\lfloor}{\rfloor}{#1}{#2}} 
\newcommandx{\round}[2][1=normal]{\braces{[}{]}{#1}{#2}} 
\newcommandx{\der}[1]{D^{#1}} 
\newcommandx{\gradient}{\nabla} 
\newcommandx{\partder}[4][1={},4={}]{\frac{\partial^{#4} #2}{\partial #3^{#4}}\ifargdef{#1}{\Big|_{#1}}} 
\newcommandx{\integ}[4][1={},2={}]{\int_{#1}^{#2} #3 \, #4} 
\newcommandx{\asympffaster}[2][1=normal]{o\braces{(}{)}{#1}{#2}} 
\newcommandx{\asympfaster}[2][1=normal]{O\braces{(}{)}{#1}{#2}} 
\newcommandx{\asympeq}[2][1=normal]{\Theta\braces{(}{)}{#1}{#2}} 
\newcommandx{\asympsslower}[2][1=normal]{\omega\braces{(}{)}{#1}{#2}} 
\newcommandx{\asympslower}[2][1=normal]{\Omega\braces{(}{)}{#1}{#2}} 
\DeclareMathOperator{\Id}{Id} 
\newcommandx{\norm}[2][1=normal]{\braces{\|}{\|}{#1}{#2}} 
\renewcommandx{\sp}[3][1=normal]{\braces{\langle}{\rangle}{#1}{#2, #3}} 
\newcommandx{\End}[2][2={}]{\mathcal{L}\opleft( #1 \ifargdef{#2}{, #2} \opright)} 
\newcommand{\orthcompl}[1]{{#1}^\perp} 
\DeclareMathOperator{\ran}{ran} 
\DeclareMathOperator{\spann}{\operatorname{span}} 
\newcommand{\T}{\mathsf{T}} 
\newcommand{\psinv}[1]{#1^{\dagger}} 
\renewcommand{\vec}[1]{\boldsymbol{#1}} 
\newcommandx{\measure}[2][1=normal]{\operatorname{vol}\braces{(}{)}{#1}{#2}} 
\DeclareMathOperator{\supp}{supp} 
\newcommandx{\Leb}[3][1={},3=normal]{L^{#2}\ifargdef{#1}{\braces{(}{)}{#3}{#1}}{}} 
\newcommandx{\Lebnorm}[4][1=normal,3={2},4={}]{\norm[#1]{#2}_{#3}} 
\renewcommandx{\l}[3][1={},3=normal]{\ell^{#2}\ifargdef{#1}{\braces{(}{)}{#3}{#1}}} 
\newcommandx{\lnorm}[4][1=normal,3={2},4={}]{\norm[#1]{#2}_{#3}} 
\newcommandx{\Smooth}[4][1={},3={},4=normal]{C_{#3}^{#2}\ifargdef{#1}{\braces{(}{)}{#4}{#1}}} 
\newcommandx{\Schwartz}[2][1={},2=normal]{\mathscr{S}\ifargdef{#1}{\braces{(}{)}{#2}{#1}}} 
\newcommandx{\Schwartzpoly}[2][1=normal]{\braces{\langle}{\rangle}{#1}{\abs[#1]{#2}} } 
\newcommandx{\Tempdistr}[2][1={},2=normal]{\mathscr{S}'\ifargdef{#1}{\braces{(}{)}{#2}{#1}}} 
\newcommandx{\distrinp}[3][1=normal]{\braces{\langle}{\rangle}{#1}{#2, #3}} 
\newcommandx{\ft}[3][1=default,2=auto]{
\ifstrequal{#1}{default}{\widehat{#3}}{
\ifstrequal{#1}{long}{{\braces{(}{)}{#2}{#3}}^{\wedge}}{}}} 
\newcommandx{\ift}[3][1=default,2=auto]{
\ifstrequal{#1}{default}{\check{#3}}{
\ifstrequal{#1}{long}{{\braces{(}{)}{#2}{#3}}^{\vee}}{}}} 
\renewcommand{\v}{\vec{v}}
\newcommand{\func}{f}
\newcommand{\funcclass}{\mathcal{F}}
\newcommand{\fobs}{g}
\newcommand{\Fobs}{G}
\newcommand{\scalfac}{\mu}
\newcommand{\x}{\vec{x}}
\newcommand{\lat}{\vec{s}}
\newcommand{\latsig}{{\vec{v}}}
\newcommand{\latnoise}{{\vec{n}}}
\newcommand{\latpv}{\vec{z}}
\newcommand{\targetlatpv}{\target{\latpv}}
\newcommand{\trulatpv}{\tru{\latpv}}
\newcommand{\xpv}{\vec{\beta}}
\newcommand{\atoms}{\vec{A}}
\newcommand{\atrafo}{\vec{M}}
\newcommand{\p}{{p}}
\renewcommand{\d}{{d}}
\newcommand{\U}{\vec{U}}
\newcommand{\D}{\vec{D}}
\newcommand{\tru}[1]{{#1}_0}
\newcommand{\solu}[1]{\hat{#1}}
\newcommand{\target}[1]{{#1}^\natural}
\newcommand{\soluexp}[1]{{#1}^\ast}
\newcommand{\actsupp}{\mathcal{S}}
\newcommand{\sset}{K}
\newcommand{\ssetalt}{L}
\newcommand{\tset}{T}
\newcommand{\modelcovar}[2][]{\rho_{#1}(#2)}
\newcommand{\modeldev}[2][]{\sigma_{#1}(#2)}
\newcommand{\loss}{\mathcal{L}}
\newcommand{\exloss}{\mathcal{E}}
\newcommand{\lossemp}[1][{}]{\bar{\mathcal{L}}_{#1}}
\newcommand{\multiplterm}[1]{\mathcal{M}(#1)}
\newcommand{\quadrterm}[1]{\mathcal{Q}(#1)}
\newcommand{\vdir}{\vec{v}} 
\newcommand{\I}[1]{\vec{I}_{#1}}
\newcommand{\vnull}{\vec{0}}
\newcommand{\h}{\vec{h}}
\newcommand{\proj}{\vec{P}}
\newcommand{\vunit}{\vec{e}}
\newcommandx{\prob}[2][1={},2=normal]{\mathbb{P}\ifargdef{#1}{\braces{[}{]}{#2}{#1}}}
\newcommandx{\mean}[2][1={},2=normal]{\mathbb{E}\ifargdef{#1}{\braces{[}{]}{#2}{#1}}}
\newcommandx{\var}[2][1={},2=normal]{\mathbb{V}\ifargdef{#1}{\braces{[}{]}{#2}{#1}}}
\newcommand{\distributed}{\sim}
\newcommandx{\Normdistr}[3][1=normal]{\mathcal{N}\braces{(}{)}{#1}{#2, #3}} 
\newcommandx{\normsubg}[2][1=normal]{\norm[#1]{#2}_{\psi_2}} 
\newcommand{\subgparam}{\kappa} 
\newcommand{\gaussian}{\vec{g}} 
\newcommand{\Covmatr}{\vec{\Sigma}} 
\newcommandx{\anorm}[3][1=normal,3={\sset}]{\norm[#1]{#2}_{#3}} 
\newcommandx{\opnorm}[2][1=normal]{\norm[#1]{#2}_{\operatorname{op}}} 
\newcommandx{\ball}[2][1={},2={}]{B_{#1}^{#2}} 
\renewcommand{\S}{\mathbb{S}} 
\newcommand{\meanwidth}[2][{}]{w_{#1}(#2)} 
\newcommand{\effdim}[2][{}]{w_{#1}^2(#2)} 
\newcommand{\conic}{\wedge} 
\newcommand{\cone}[1]{\operatorname{cone}(#1)} 
\begin{document}

\renewcommand*{\thefootnote}{\fnsymbol{footnote}}
\pagestyle{scrheadings}

\begin{center}
	\bfseries\larger[3]{The Mismatch Principle: The Generalized Lasso Under Large Model Uncertainties}
\end{center}

\vspace{1\baselineskip}
\begin{addmargin}[2em]{2em}
\begin{center}
\noindent{\normalsize\bfseries{Martin Genzel\footnote{Technische Universit\"at Berlin, Department of Mathematics, Berlin, Germany; E-Mail:~\href{mailto:genzel@math.tu-berlin.de}{\texttt{genzel@math.tu-berlin.de}}} \qquad Gitta Kutyniok\footnote{Technische Universit\"at Berlin, Department of Mathematics and Department of Electrical Engineering \& Computer Science, Berlin, Germany; University of Troms\o, Department of Physics and Technology, Troms\o, Norway; E-Mail:~\href{mailto:kutyniok@math.tu-berlin.de}{\texttt{kutyniok@math.tu-berlin.de}}}}}
%
%
\end{center}


\vspace{1\baselineskip}
{\smaller
\noindent\textbf{Abstract.}
We study the estimation capacity of the generalized Lasso, i.e., least squares minimization combined with a (convex) structural constraint.
While Lasso-type estimators were originally designed for noisy linear regression problems, it has recently turned out that they are in fact robust against various types of model uncertainties and misspecifications, most notably, non-linearly distorted observation models.
This work provides more theoretical evidence for this somewhat astonishing phenomenon.
At the heart of our analysis stands the mismatch principle, which is a simple recipe to establish theoretical error bounds for the generalized Lasso.
The associated estimation guarantees are of independent interest and are formulated in a fairly general setup, permitting arbitrary sub-Gaussian data, possibly with strongly correlated feature designs; in particular, we do not assume a specific observation model which connects the input and output variables.
Although the mismatch principle is conceived based on ideas from statistical learning theory, its actual application area are (high-dimensional) estimation tasks for semi-parametric models.
In this context, the benefits of the mismatch principle are demonstrated for a variety of popular problem classes, such as single-index models, generalized linear models, and variable selection. 
Apart from that, our findings are also relevant to recent advances in quantized and distributed compressed sensing. 

\vspace{.5\baselineskip}
\noindent\textbf{Key words.}
Statistical learning, generalized Lasso, semi-parametric models, high-dimensional estimation, Gaussian mean width.

\vspace{.5\baselineskip}
\noindent\textbf{MSC classes.}
68T37, 60D05, 90C25, 62F30, 62F35

}
\end{addmargin}
\newcommand{\shortauthor}{Genzel and Kutyniok: The Mismatch Principle}

\renewcommand*{\thefootnote}{\arabic{footnote}}
\setcounter{footnote}{0}


\thispagestyle{plain}

\section{Introduction}
\label{sec:intro}

\subsection{Motivation: From Statistical Learning to Semi-Parametric Estimation}
\label{subsec:intro:statlearn}

One of the key objectives in statistical learning and related fields is to study \emph{sampling processes} that arise from a random pair $(\x, y)$ in $\R^\p \times \R$ whose joint probability distribution $\mu$ is \emph{unknown}. In this context, the random vector $\x = (x_1, \dots, x_\p) \in \R^\p$ is typically regarded as a collection of \emph{input variables} or \emph{features}, whereas $y \in \R$ corresponds to an \emph{output variable} that one would like to predict from $\x$.\footnote{There exist many synonyms for $\x$ and $y$. We collected some of them in Table~\ref{tab:results:terminology}, which summarizes the most important notations and terminology of this work.}
More precisely, the main goal is to select a function $\soluexp\func \colon \R^\p \to \R$ from a certain \emph{hypothesis class} $\funcclass \subset L^2(\R^\p,\mu_{\x})$ such that the expected risk is minimized:
\begin{equation}\label{eq:intro:explossmin-general}
	\min_{\func \in \funcclass} \ \mean[(y - \func(\x))^2].
\end{equation}
A solution $\soluexp\func \in \funcclass$ to \eqref{eq:intro:explossmin-general} is then called an \emph{expected risk minimizer} and yields the \emph{optimal} estimator (approximation) of $y$ in $\funcclass$, with respect to the mean squared error.

However, it is not possible to directly solve \eqref{eq:intro:explossmin-general} in practice because the underlying probability measure $\mu$ of $(\x, y)$ is unknown.
Instead, one is merely given a finite amount of \emph{observed data}
\begin{equation}\label{eq:intro:datasamples}
	(\x_1, y_1), \dots, (\x_n, y_n) \in \R^\p \times \R
\end{equation}
where each of these pairs is an independent random sample of $(\x,y)$. This limitation suggests to consider the empirical analog of \eqref{eq:intro:explossmin-general}, which is well-known as \emph{empirical risk minimization} (\emph{ERM}):
\begin{equation}\label{eq:intro:emplossmin-general}
	\min_{\func \in \funcclass} \ \tfrac{1}{n}\sum_{i = 1}^n (y_i - \func(\x_i))^2.
\end{equation}
Such types of optimization problems have been extensively studied in statistical learning theory during the last decades, e.g., see \cite{vapnik1998learning,cucker2007learning,shalev2014understanding} for comprehensive overviews.
One of the primary concerns in this field of research is to establish (non-asymptotic) bounds on the \emph{estimation error}
\begin{equation}\label{eq:intro:esterr}
	\mean{}_{\x}[(\solu\func(\x) - \soluexp\func(\x))^2]
\end{equation}
with $\solu\func \in \funcclass$ being a minimizer of \eqref{eq:intro:emplossmin-general}.\footnote{Note that $\solu\func$ is actually a random element of $\funcclass$, since it depends on the sample set $\{(\x_i, y_i)\}_{i = 1}^n$. The subscript $\x$ in \eqref{eq:intro:esterr} indicates that the expectation is computed only with respect to $\x$ and we condition on $\{(\x_i, y_i)\}_{i = 1}^n$. In particular, $\solu\func$ is not random in this situation.}
In particular, it has turned out that, in many situations of interest, empirical risk minimization constitutes a \emph{consistent} (or \emph{asymptotically unbiased}) estimator of $\soluexp\func$, in the sense that $\mean{}_{\x}[(\solu\func(\x) - \soluexp\func(\x))^2] \to 0$ in probability as $n \to \infty$.
Provided that the sample size $n$ is sufficiently large, one can therefore expect in such case that $\solu\func$ acquires the predictive capacity of the expected risk minimizer $\soluexp\func$.

In this paper, we only focus on those hypothesis classes that contain \emph{linear} functions, i.e.,
\begin{equation}
	\funcclass = \{ \x \mapsto \sp{\x}{\xpv} \suchthat \xpv \in \sset \}
\end{equation}
for a certain \emph{convex} subset $\sset \subset \R^\p$, which is referred to as the \emph{hypothesis set} or \emph{constraint set}.
While this choice of $\funcclass$ is actually one of the simplest examples to think of, the associated empirical risk minimization program has prevailed as a standard approach in modern statistics and signal processing. Indeed, \eqref{eq:intro:emplossmin-general} just turns into a \emph{constrained least squares estimator} in the linear case:
\begin{equation} \label{eq:intro:klasso}\tag{$\mathsf{P}_{\sset}$}
	\min_{\xpv \in \sset} \ \tfrac{1}{n} \sum_{i = 1}^n (y_i - \sp{\x_i}{\xpv})^2,
\end{equation}
and the expected risk minimization problem \eqref{eq:intro:explossmin-general} takes the form
\begin{equation} \label{eq:intro:explossmin-linear}
	\min_{\xpv \in \sset} \ \mean[(y - \sp{\x}{\xpv})^2].
\end{equation}
The purpose of the \emph{parameter vector} $\xpv \in \sset$ is now to ``explain'' the observed data by a linear model.
At the same time, the hypothesis set $\sset$ imposes additional structural constraints that restrict the set of admissible models.
In this way --- if $\sset$ is appropriately chosen --- accurate estimation can even become possible in \emph{high-dimensional} scenarios where $n \ll \p$.
Perhaps the most popular example is the \emph{Lasso} \cite{tibshirani1996lasso}, which was originally designed for \emph{sparse} linear regression and employs a scaled $\l{1}$-ball as constraint set. 
As a tribute to this contribution, we will speak from now on of the \emph{generalized Lasso} when referring to \eqref{eq:intro:klasso}; this naming particularly facilitates the distinction between \eqref{eq:intro:klasso} and the more general problem of empirical risk minimization stated in \eqref{eq:intro:emplossmin-general}.

An appealing aspect of many results in statistical learning is that almost no specific assumptions on the output variable $y$ are made, except for some mild regularity.
While these theoretical findings in principle cover a wide class of estimation problems, the actual error bounds are however rather implicit and technical.
On the other hand, there is usually more knowledge available about the sampling process in concrete model settings, eventually allowing for more practicable guarantees.
This work is an attempt to bridge the gap between these two conceptions: our main results still take the abstract viewpoint of statistical learning, but also provide an easy-to-use toolbox to derive \emph{explicit error bounds} for \eqref{eq:intro:klasso}.
Before presenting further details of our approach in the next subsection, we wish to highlight two important aspects of the sample pair $(\x,y) \in \R^\p \times \R$ that will emerge over and over again in the remainder of this article:
\begin{deflist}
\item
	\emph{Correlated inputs.} In case $\x$ represents real-world data, the input variables are typically strongly correlated and affected by noise.
		In this context, it is useful to assume that $\x$ can be at least linearly factorized as follows:
		\begin{equation}\label{eq:intro:fvdecomp}
			\x = \atoms \lat 
		\end{equation}
		where $\lat = (s_1, \dots, s_\d)$ is a centered isotropic random vector\footnote{A random vector $\lat$ in $\R^\d$ is \emph{centered} if $\mean[\lat] = \vnull$ and it is \emph{isotropic} if $\mean[\sp{\lat}{\latpv}^2] = \lnorm{\latpv}^2$ for all $\latpv \in \R^\d$.} in $\R^\d$ and $\atoms \in \R^{\p \times \d}$ is a deterministic matrix, called the \emph{mixing matrix}; in fact, such an \emph{isotropic decomposition} does always exist as long as $\x$ is centered with a well-defined covariance matrix of rank $\d$ (see Proposition~\ref{prop:results:setup:modeldecomp}).
		Noteworthy, the statistical ``fluctuations'' of $\x$ are now completely determined by $\lat$, so that one may  phrase the probabilistic dependence of $y$ on $\x$ also in terms of $\lat$; and indeed, this is precisely what we will do henceforth.
		
		The \emph{linear factor model} of \eqref{eq:intro:fvdecomp} leads to the following important simplification of the estimation error in \eqref{eq:intro:esterr}:
		Let $\solu\xpv\in \R^\p$ be a solution to \eqref{eq:intro:klasso} and let $\soluexp\xpv\in \R^\p$ be an expected risk minimizer of \eqref{eq:intro:explossmin-linear}, i.e., $\soluexp\func(\cdot) \coloneqq \sp{\cdot}{\soluexp\xpv}$ solves \eqref{eq:intro:explossmin-general}. Setting $\solu\func(\cdot) \coloneqq \sp{\cdot}{\solu\xpv}$ and using the isotropy of $\lat$, we observe that
		\begin{align}
			\mean{}_{\x}[(\solu\func(\x) - \soluexp\func(\x))^2] &= \mean{}_{\x}[\sp{\x}{\solu\xpv - \soluexp\xpv}^2] = \mean{}_{\lat}[\sp{\atoms \lat}{\solu\xpv - \soluexp\xpv}^2] \\*
			&= \mean{}_{\lat}[\sp{\lat}{\atoms^\T\solu\xpv - \atoms^\T\soluexp\xpv}^2] = \lnorm{\atoms^\T\solu\xpv - \atoms^\T\soluexp\xpv}^2 \ . \label{eq:intro:estimerror}
		\end{align}
		This identity reflects a well-known issue of least squares estimators: both $\solu\xpv$ and $\soluexp\xpv$ can be highly non-unique due to (perfectly) correlated input variables. But the error measure of \eqref{eq:intro:estimerror} resolves this ``ambiguity,'' in the sense that one can still compare two parameter vectors by weighting them with $\atoms^\T$.
		However, it is worth emphasizing that \eqref{eq:intro:fvdecomp} and \eqref{eq:intro:estimerror} are primarily of theoretical interest because the mixing matrix $\atoms$ is often unknown in practice. In that situation, $\lat$ is not directly accessible, which explains why its components $s_1, \dots, s_\d$ are sometimes also referred to as \emph{latent variables} or \emph{latent factors}.
\item
	\emph{Semi-parametric models.} It is quite obvious that the linear hypothesis functions used in \eqref{eq:intro:klasso} are not capable of learning complicated non-linear output rules. More specifically, we cannot expect that $\solu\func(\x) = \sp{\x}{\solu\xpv}$ or $\soluexp\func(\x) = \sp{\x}{\soluexp\xpv}$ provide a good approximation of $y$. On the other hand, there exist many relevant scenarios where the output variable $y$ (approximately) follows a semi-parametric model and one is interested in estimating some unknown parameters, rather than predicting the actual value of $y$.
	In such a case, there is still hope that the outcome of the generalized Lasso \eqref{eq:intro:klasso} --- despite a large \emph{model uncertainty} --- allows us to deduce the parametric dependence between $y$ and $\x = \atoms \lat$, where we assume that \eqref{eq:intro:fvdecomp} holds.
	This rationale will prove particularly useful for the following two popular observation models, which will serve as running examples in this article:
	\begin{listing}
	\item
		\emph{Single-index models.} Let $\trulatpv \in \R^\d$ and assume that
		\begin{equation}\label{eq:intro:sim}
			y = \fobs(\sp{\lat}{\trulatpv})
		\end{equation}
		for a scalar function $\fobs \colon \R \to \R$ which can be non-linear, unknown, and random (independently of $\lat$).
		The goal is to construct an estimator of the unknown \emph{index vector} $\trulatpv$ (or at least of its direction $\trulatpv / \lnorm{\trulatpv}$) using \eqref{eq:intro:klasso}.
	\item
		\emph{Variable selection.} Let $\{k_1, \dots, k_S\} \subset \{1, \dots, \d\}$ and assume that
		\begin{equation}\label{eq:intro:vs}
			y = \Fobs(s_{k_1}, \dots, s_{k_S})
		\end{equation}
		for a function $\Fobs \colon \R^S \to \R$ which can be again non-linear, unknown, and random.
		Can we use the generalized Lasso \eqref{eq:intro:klasso} to extract the set of \emph{active variables} $\actsupp \coloneqq \{k_1, \dots, k_S\}$?
	\end{listing}
\end{deflist}

The above concerns give rise to several general issues that we would like to address in this work:
\begin{properties}[3em]{Q}
\item\label{quest:intro:estimation}
	\emph{Estimation.}
	Can one establish a non-asymptotic upper bound on the estimation error in \eqref{eq:intro:estimerror} that is controlled by the sample size $n$?
	When does $\solu\latpv \coloneqq \atoms^\T \solu\xpv \in \R^\d$ provide a consistent estimator of $\soluexp\latpv \coloneqq \atoms^\T \soluexp\xpv \in \R^\d$?
\item\label{quest:intro:approximation}
	\emph{Interpretability.}
	Does it always make sense to aim for an estimate of $\soluexp\latpv$ when one is interested in specific parameters of an observation model?
	In other words, does~$\soluexp\latpv$ carry the desired information?
	For example, one could ask whether $\soluexp\latpv$ is contained in $\spann{\{ \trulatpv \}}$ when assuming a single-index model \eqref{eq:intro:sim}, or whether $\soluexp\latpv$ is supported on $\actsupp$ when $y$ obeys \eqref{eq:intro:vs}.
\item\label{quest:intro:complexity}
	\emph{Complexity.} What role is played by the hypothesis set $\sset$, especially in  high-dimen\-sion\-al problems? How to exploit low-complexity features of the underlying observation model, for instance, if $S \ll \d$ in variable selection \eqref{eq:intro:vs}?
\end{properties}

\subsection{The Mismatch Principle at a Glance}
\label{subsec:intro:mismatch}

In order to highlight the main ideas of our theoretical approach and to avoid unnecessary technicalities, let us assume in this subsection that \eqref{eq:intro:fvdecomp} holds true with $\atoms = \I{\d}$, where $\I{\d} \in \R^{\d \times \d}$ denotes the identity matrix in $\R^\d$.
This simplification of the data model implies that $\x = \lat$ is an \emph{isotropic} random vector in $\R^\d$, and adopting the terminology of the previous paragraphs, we particularly have $\solu\xpv = \solu\latpv$, $\soluexp\xpv = \soluexp\latpv$, and $\x_i = \lat_i$ for $i = 1, \dots, n$.
For the sake of consistency (with the definitions of Section~\ref{sec:results}), we agree on writing $\lat$, $\lat_i$, $\solu\latpv$, $\soluexp\latpv$ instead of $\x$, $\x_i$, $\solu\xpv$, $\soluexp\xpv$, respectively. The generalized Lasso \eqref{eq:intro:klasso} then reads as follows:\footnote{When referring to \eqref{eq:intro:klasso} as an \emph{estimator}, we actually mean a \emph{minimizer} of \eqref{eq:intro:klasso}.}
\begin{equation} \label{eq:intro:klassoiso}\tag{$\mathsf{P}_{\sset}^{\text{iso}}$}
	\min_{\latpv \in \sset} \ \tfrac{1}{n} \sum_{i = 1}^n (y_i - \sp{\lat_i}{\latpv})^2.
\end{equation}

A key quantity of our framework is the so-called \emph{mismatch covariance} which is defined as
\begin{equation}
	\modelcovar{\targetlatpv} \coloneqq \lnorm[\big]{\mean[(y - \sp{\lat}{\targetlatpv}) \lat]} \ , \qquad \targetlatpv \in \R^\d.
\end{equation}
The name `mismatch covariance' is due to the fact that $\modelcovar{\targetlatpv}$ basically measures the covariance between the input variables $\lat \in \R^\d$ and the \emph{mismatch} $y - \sp{\lat}{\targetlatpv}$ that arises from approximating the (possibly non-linear) output $y$ by a linear function $\func(\lat) \coloneqq \sp{\lat}{\targetlatpv}$.
To better understand the meaning of this statistical parameter, it is very helpful to take a closer look at the following equivalence:
\begin{equation} \label{eq:intro:orthogprinciple}
	\modelcovar{\targetlatpv} = 0 \qquad \iff \qquad \forall \latpv \in \R^\d \colon \ \mean[(y - \sp{\lat}{\targetlatpv}) \sp{\lat}{\latpv}] = 0.
\end{equation}
The right-hand side of \eqref{eq:intro:orthogprinciple} in fact corresponds to the fundamental \emph{orthogonality principle} in linear estimation theory.
According to its classical formulation, the orthogonality principle states that the prediction error of the optimal estimator is orthogonal (uncorrelated) to every possible linear estimator $\func(\lat) = \sp{\lat}{\latpv}$ with $\latpv \in \R^\d$ (cf. \cite[Sec.~12.4]{kay1993fundamentals}).
This argument becomes more plausible when computing the gradient of the expected risk at $\targetlatpv \in \R^\d$,
\begin{equation}
	\gradient_{\latpv = \targetlatpv} \ \mean[(y - \sp{\lat}{\latpv})^2] = -2 \mean[(y - \sp{\lat}{\targetlatpv}) \lat],
\end{equation}
which implies that \eqref{eq:intro:orthogprinciple} is equivalent to $\func = \sp{\cdot}{\targetlatpv}$ being a critical point of the objective function in \eqref{eq:intro:explossmin-general}.
Consequently, if the mismatch covariance vanishes at $\targetlatpv \in \sset$, then $\targetlatpv$ is an expected risk minimizer in the sense that $\targetlatpv = \soluexp\latpv$.

However, we emphasize that \eqref{eq:intro:orthogprinciple} is only a \emph{sufficient} condition for $\targetlatpv = \soluexp\latpv$, since the search space of \eqref{eq:intro:klassoiso} is restricted to a certain hypothesis set $\sset \subset \R^n$.
There indeed exist several relevant model setups sketched further below where the traditional orthogonality principle fails to work.
In this light, the mismatch covariance can be considered as a refined concept that quantifies how far one is from fulfilling the optimality condition of \eqref{eq:intro:orthogprinciple}. 
Let us make this idea more precise by stating an informal version of one of our main results (see Theorem~\ref{thm:results:bounds:global} and Corollary~\ref{cor:appl:nonlinear:global}):
\begin{theorem}[informal]\label{thm:intro:maininformal}
	Let $y$ be a sub-Gaussian variable and let $\lat$ be a centered isotropic sub-Gaussian random vector in $\R^\d$ (cf. Subsection~\ref{subsec:intro:notation}\ref{item:intro:notation:probability}).
	Assume that $\sset \subset \R^\d$ is a bounded, convex subset and fix a vector $\targetlatpv \in \sset$.
	If $n \geq C \cdot \effdim{\sset}$, then every minimizer $\solu\latpv$ of \eqref{eq:intro:klassoiso} satisfies with high probability
	\begin{equation}\label{eq:intro:maininformal:bound}
		\lnorm{\solu\latpv - \targetlatpv} \leq C' \cdot \Big[ \Big(\frac{\effdim{\sset}}{n}\Big)^{1/4} + \modelcovar{\targetlatpv} \Big],
	\end{equation}
	where $\meanwidth{\sset}$ denotes the Gaussian mean width of the hypothesis set $\sset$ (cf. Definition~\ref{def:results:parameters:meanwidth}), and $C, C' > 0$ are model-dependent constants detailed in Section~\ref{sec:results}.
\end{theorem}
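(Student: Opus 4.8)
The plan is to run the classical \enquote{basic inequality plus localization} scheme, with the mismatch covariance $\modelcovar{\targetlatpv}$ absorbing the irreducible bias. Since $\solu\latpv$ minimizes the empirical loss $\lossemp(\latpv) \coloneqq \tfrac1n\sum_{i=1}^n (y_i - \sp{\lat_i}{\latpv})^2$ over $\sset$ while $\targetlatpv \in \sset$ is feasible, we have $\lossemp(\solu\latpv) \le \lossemp(\targetlatpv)$. Writing $\h \coloneqq \solu\latpv - \targetlatpv$ and expanding the quadratic loss around $\targetlatpv$ turns this into the basic inequality
\[
	\tfrac1n\textstyle\sum_{i=1}^n \sp{\lat_i}{\h}^2 \;\le\; 2\,\big\langle \tfrac1n\textstyle\sum_{i=1}^n (y_i - \sp{\lat_i}{\targetlatpv})\,\lat_i \,,\ \h \big\rangle ;
\]
denote the left-hand side by $\quadrterm{\h}$ (the \emph{quadratic term}) and half of the right-hand side by $\multiplterm{\h}$ (the \emph{multiplier term}). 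The whole argument reduces to a uniform lower bound on $\quadrterm{\h}$ and a uniform upper bound on $\multiplterm{\h}$, with $\h$ ranging over the bounded convex set $\sset - \targetlatpv$.

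For the multiplier term, set $\xi_i \coloneqq y_i - \sp{\lat_i}{\targetlatpv}$ and note that $\mean[\xi_i\lat_i] = \mean[(y - \sp{\lat}{\targetlatpv})\lat]$ has Euclidean norm exactly $\modelcovar{\targetlatpv}$. Splitting off this expectation and using Cauchy--Schwarz,
\[
	\multiplterm{\h} \;\le\; \modelcovar{\targetlatpv}\,\lnorm{\h} \;+\; \sup_{\vec u \in \sset - \targetlatpv}\Big| \big\langle \tfrac1n\textstyle\sum_{i=1}^n \big(\xi_i\lat_i - \mean[\xi_i\lat_i]\big) \,,\ \vec u \big\rangle \Big| .
\]
The first summand is exactly the bias; the supremum is a \emph{multiplier empirical process}. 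Since $y$ and $\lat$ are sub-Gaussian, each $\xi_i$ is sub-Gaussian and the coordinates of $\xi_i\lat_i$ are sub-exponential, so a generic-chaining (multiplier-inequality) estimate for such processes bounds the expected supremum by $C\,\meanwidth{\sset}/\sqrt{n}$ plus genuinely lower-order $1/n$ terms --- here the boundedness of $\sset$ keeps the radius that enters the sub-exponential tail under control --- and a Talagrand-type concentration inequality upgrades this to a high-probability statement.

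For the quadratic term we use $\quadrterm{\h} \ge \lnorm{\h}^2 - \sup_{\vec u \in \sset - \targetlatpv}\big| \tfrac1n\sum_{i=1}^n \sp{\lat_i}{\vec u}^2 - \lnorm{\vec u}^2 \big|$, and the remaining supremum is controlled by the matrix deviation inequality for isotropic sub-Gaussian vectors: over the bounded set $\sset - \targetlatpv$ it is at most $C(\meanwidth{\sset}/\sqrt{n} + \effdim{\sset}/n)$, which under the hypothesis $n \ge C\,\effdim{\sset}$ is at most $C'\,\meanwidth{\sset}/\sqrt{n}$. Feeding the two bounds into the basic inequality gives, on the intersection of the relevant high-probability events,
\[
	\lnorm{\h}^2 \;\le\; 2\,\modelcovar{\targetlatpv}\,\lnorm{\h} \;+\; C''\,\frac{\meanwidth{\sset}}{\sqrt{n}} .
\]
Solving this scalar quadratic inequality for $\lnorm{\h} = \lnorm{\solu\latpv - \targetlatpv}$ yields $\lnorm{\h} \lesssim \modelcovar{\targetlatpv} + (\meanwidth{\sset}/\sqrt{n})^{1/2}$, and since $(\meanwidth{\sset}/\sqrt{n})^{1/2} = (\effdim{\sset}/n)^{1/4}$ this is precisely the asserted estimate; the square root incurred when solving the quadratic is exactly the origin of the fourth-root rate, while the mismatch term $\modelcovar{\targetlatpv}$ survives additively.

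The main obstacle is the uniform control of the multiplier process $\sup_{\vec u}\big|\langle \tfrac1n\sum_i(\xi_i\lat_i - \mean[\xi_i\lat_i]), \vec u\rangle\big|$: the multipliers $\xi_i$ are correlated with the feature vectors $\lat_i$ and are only sub-Gaussian rather than bounded, so the product increments are heavy-tailed, and one must carefully isolate the $\meanwidth{\sset}/\sqrt{n}$ leading term from genuinely lower-order contributions and then establish concentration for an \emph{unbounded} empirical process. By comparison, the basic inequality is a one-line computation and the quadratic-term bound is a routine application of known matrix deviation results.
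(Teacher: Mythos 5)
Your proof is correct and delivers the stated bound, but it takes a genuinely different route from the paper. You run the classical global scheme: basic inequality $\lossemp{}(\solu\latpv)\le\lossemp{}(\targetlatpv)$, a uniform upper bound on the multiplier process and a uniform lower bound on the quadratic form over the \emph{entire} set $\sset-\targetlatpv$, and then a scalar quadratic inequality $\lnorm{\h}^2\le 2\modelcovar{\targetlatpv}\lnorm{\h}+C\,\meanwidth{\sset}/\sqrt{n}$ whose solution produces the $n^{-1/4}$ rate. The paper instead \emph{localizes}: it fixes a radius $t$, restricts both empirical processes to the sphere $\{\latpv\in\sset : \lnorm{\latpv-\targetlatpv}=t\}$ via the same two tools you invoke (Mendelson's multiplier inequality and the Liaw--Mehrabian--Plan--Vershynin matrix deviation inequality), shows the excess risk is strictly positive there, and concludes by convexity of $\lambda\mapsto\exloss(\operatorname{Seg}(\lambda),\target\xpv)$ that every minimizer lies inside the ball of radius $t$; the $n^{-1/4}$ then comes from the self-consistent choice $t\asymp(\meanwidth{\sset}/\sqrt{n})^{1/2}$ together with the crude bound $\meanwidth[t]{\cdot}\le\meanwidth{\cdot}/t$. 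The two mechanisms are morally equivalent for the global statement, and your version is arguably more elementary. What the paper's localization buys is the refinement to the local and conic mean widths (Theorem~\ref{thm:results:bounds:conic} and Theorem~\ref{thm:proofs:results:local}), i.e.\ the optimal $n^{-1/2}$ rate, which a global basic-inequality argument cannot reach. One detail worth flagging in your write-up: the multiplier and quadratic concentration results are most cleanly stated for index sets contained in a sphere $t\S^{\d-1}$, whereas you apply them over the full bounded set $\sset-\targetlatpv$; this is fine (the matrix deviation inequality holds for arbitrary bounded sets, and the multiplier bound extends by a peeling argument or by using the radius of $\sset$ in place of $t$), but it is exactly the step where the boundedness of $\sset$ enters and deserves an explicit sentence, since it is also where your constant picks up its dependence on $\operatorname{diam}(\sset)$ and $\modeldev{\targetlatpv}$.
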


At first sight, this result appears somewhat odd, as it states an error bound for every choice of the \emph{target vector} $\targetlatpv \in \sset$.
But the actual significance of \eqref{eq:intro:maininformal:bound} clearly depends on the size of the mismatch covariance $\modelcovar{\targetlatpv}$. 
In particular, if $\modelcovar{\targetlatpv} > 0$, we do not even obtain a consistent estimate of $\targetlatpv$, i.e., the approximation error does not tend to $0$ as $n \to \infty$.
Following the above reasoning, it is therefore quite natural to select $\targetlatpv = \soluexp\latpv$, which yields the best possible outcome from Theorem~\ref{thm:intro:maininformal} and reflects the common strategy of statistical learning.

While this viewpoint primarily addresses the sampling-related issues of \ref{quest:intro:estimation} and \ref{quest:intro:complexity}, the problem of interpretability \ref{quest:intro:approximation} remains untouched. 
A precise answer to \ref{quest:intro:approximation} in fact strongly depends on the interplay between the structure of the (semi-parametric) output variable $y$ and the corresponding expected risk minimizer $\soluexp\latpv$.
For example, Plan and Vershynin recently verified in \cite{plan2015lasso} that a single-index model \eqref{eq:intro:sim} can be consistently learned via the generalized Lasso \eqref{eq:intro:klassoiso}, as long as the input data is \emph{Gaussian}. Although not stated explicitly, their finding is underpinned by the orthogonality principle of \eqref{eq:intro:orthogprinciple}, in the sense that one can always achieve $\modelcovar{\scalfac\trulatpv} = 0$ for an appropriate scalar factor $\scalfac \in \R$;
see Subsection~\ref{subsec:appl:nonlinear:sim} for more details.
Interestingly, the situation is very different for \emph{non-Gaussian} single-index models, where $\soluexp\latpv$ does not necessarily belong to $\spann\{\trulatpv\}$ anymore; see \cite[Rmk.~1.5]{plan2013onebit} and \cite{ai2014onebitsubgauss}.
But fortunately, the versatility of Theorem~\ref{thm:intro:maininformal} pays off at this point because it still permits to select a vector $\targetlatpv \in \tset \coloneqq \spann\{\trulatpv\}$. 
Thus, if there exists $\targetlatpv \in \tset \intersec \sset$ such that $\modelcovar{\targetlatpv}$ becomes sufficiently small, \eqref{eq:intro:maininformal:bound} turns into a meaningful recovery guarantee for single-index models.
A similar strategy applies to the problem of variable selection \eqref{eq:intro:vs} by considering $\targetlatpv \in \tset \coloneqq \{\latpv \in \R^\d \suchthat \supp(\latpv) \subset \actsupp \}$; see Subsection~\ref{subsec:appl:nonlinear:mim}.

These prototypical examples demonstrate that semi-parametric models often come along with a certain \emph{target set} $\tset \subset \R^\d$ containing all those vectors which allow us to extract the parameters of interest.
In other words, one would be satisfied as soon as an estimation procedure approximates any vector $\targetlatpv \in \tset$.
Transferring this notion to the general setup of Theorem~\ref{thm:intro:maininformal}, we simply assume that there exists such a target set $\tset \subset \R^\d$, which encodes the desired (parametric) information about the data pair $(\lat, y)$. The meaning of the term `information' is of course highly application-specific in this context, for instance, it is also reasonable to ask for the support of a (sparse) index vector $\trulatpv$ in \eqref{eq:intro:sim}, instead of its direction.
Therefore, we will leave $\tset$ unspecified in the following and treat it as an abstract component of the underlying observation model.
Note that, compared to the hypothesis set $\sset$, the target set $\tset$ is \emph{unknown} in practice, and to some extent, it plays the role of the \emph{ground truth} parameters that one would like to estimate.

Combining the concept of target sets with Theorem~\ref{thm:intro:maininformal}, we can now formulate an informal version of the \emph{mismatch principle}:
\begin{quote}\itshape
	Specify a target vector $\targetlatpv \in \tset \intersec \sset$ such that the mismatch covariance $\modelcovar{\targetlatpv}$ is minimized. Then invoke Theorem~\ref{thm:intro:maininformal} to obtain an error estimate for $\targetlatpv$ by a minimizer of \eqref{eq:intro:klassoiso}.
\end{quote}
This simple recipe includes the key aspects of \ref{quest:intro:estimation}--\ref{quest:intro:complexity}:
By enforcing $\targetlatpv \in \tset \intersec \sset$, we ensure that the target vector is consistent with our (parametric) assumptions on the output variable $y$ and satisfies the model hypothesis imposed by $\sset$ at the same time (see Figure~\ref{fig:intro:mismatchprinciple}).
The error bound of \eqref{eq:intro:maininformal:bound} then quantifies the compatibility of these desiderata by means of the mismatch covariance $\modelcovar{\targetlatpv}$ and the Gaussian mean width $\meanwidth{\sset}$.
This does not only provide a theoretical guarantee for the generalized Lasso \eqref{eq:intro:klassoiso}, but also more insight into its capacity to solve (or not) a specific estimation problem.
\begin{figure}
	\centering
	\tikzstyle{blackdot}=[shape=circle,fill=black,minimum size=1mm,inner sep=0pt,outer sep=0pt]
		\begin{tikzpicture}[scale=2,extended line/.style={shorten >=-#1,shorten <=-#1},extended line/.default=1cm]]
			\coordinate (K1) at (0,-.2);
			\coordinate (K2) at (.85,-.85);
			\coordinate (K3) at (.5,-1.75);
			\coordinate (K4) at (-.5,-1.5);
			\coordinate (K5) at (-.5,-.75);
			\coordinate (P0) at (0,-1);
			\path[name path=K1--K2] (K1) -- (K2);
			\path[name path=P0--X0] (P0) -- ++(65:1);
			\path[name intersections={of=P0--X0 and K1--K2,by=X0}];
			\coordinate (targetlatpv) at ($(X0)!.2!(P0)$);
			\coordinate (SpanEnd) at ($(X0)!-.65!(P0)$);
			\draw[thick] (SpanEnd) -- ($(X0)!2.4!(P0)$);
			\draw[thick,dashed] ($(X0)!-1!(P0)$) -- ($(X0)!2.75!(P0)$);
						
			\draw[fill=gray!20!white,thick] (K1) -- (K2) -- (K3) -- (K4) -- (K5) -- cycle;
			\begin{scope}
				\clip (K1) -- (K2) -- (K3) -- (K4) -- (K5) -- cycle;
				\draw[ultra thick,red] (SpanEnd) -- ($(X0)!2.75!(P0)$);
			\end{scope}
			
			\node at (barycentric cs:K1=1,K2=1,K3=8,K4=1,K5=1) {$\sset$};
			\node[blackdot, label={[label distance=-4pt]300:$\soluexp\latpv$}] at ($(targetlatpv) + (.1,-.15)$) {};
			\node[label={[label distance=-3pt]right:$\tset$}] at ($(X0)!-.35!(P0)$) {};
			\node[blackdot,label={[label distance=-3pt]180:$\targetlatpv$}] at (targetlatpv) {};
			
			\node (intersecAnchor) at ($(X0)!1.5!(P0)$) {};
			\node[left=.9 of intersecAnchor] (intersecLabel) {$\sset \intersec \tset$};
			\path[<-,shorten <=1pt,>=stealth,red] (intersecAnchor) edge (intersecLabel) ;
		\end{tikzpicture}
	\caption{Visualization of the mismatch principle. The mismatch covariance $\modelcovar{\cdot}$ is minimized on the intersection $\sset \intersec \tset$ (red part). Note that the minimizer $\targetlatpv \in \R^\d$ is not necessarily equal to the expected risk minimizer $\soluexp\latpv \in \R^\d$, and the latter might not even belong to $\sset \intersec \tset$.}
	\label{fig:intro:mismatchprinciple}
\end{figure}

Let us finally point out that the classical learning theory would simply suggest to consider $\tset = \R^\d$ and $\targetlatpv = \soluexp\latpv$.
By contrast, we allow for a restriction of $\tset$ to a certain family of \emph{interpretable} models according to the demand of \ref{quest:intro:approximation}.
It is particularly important to note that the mismatch principle does \emph{not} rely on knowledge of $\soluexp\latpv$.
Hence, the above approach is conceptually very different from the ``naive'' idea of first explicitly computing $\soluexp\latpv$ and then finding the closest point in a given target set $\tset$; see Remark~\ref{rmk:results:outofbox} for a more detailed discussion of this aspect.


\subsection{Main Contributions and Overview}
\label{subsec:intro:overview}

The major purpose of this work is to shed more light on the capability of the generalized Lasso to deal with \emph{non-linear} observation models.
A deeper understanding of these types of estimators is of considerable practical relevance because they established themselves as a benchmark method in many different application areas, such as machine learning, signal- and image processing, econometrics, or bioinformatics.
Their popularity is especially due to the fact that --- compared to more sophisticated non-convex methods --- the associated least squares problem \eqref{eq:intro:klasso} does not require any prior knowledge about the inputs and it can be efficiently solved for various choices of $\sset$ via convex programming.

But although conceptually quite simple, the performance of \eqref{eq:intro:klasso} is not fully understood to this day.
A particular shortcoming of many theoretical approaches in the literature is that they either focus on very restrictive model settings, e.g., noisy linear regression, or in stark contrast, do not make any structural assumptions on $y$.
As already pointed out in the previous subsections, our primary goal is develop a framework that connects these two opposite viewpoints and allows us to treat a large class of estimation problems in a systematic way.
The following list highlights the main achievements of this article in that respect:

\begin{listing}
\item
	\emph{Generic error bounds.}
	In Section~\ref{sec:results}, we make the approach of Subsection~\ref{subsec:intro:mismatch} more precise and provide technical definitions of the so-called \emph{mismatch parameters} and the \emph{Gaussian mean width} (see Subsection~\ref{subsec:results:parameters}).
	Our first main result, Theorem~\ref{thm:results:bounds:global} in Subsection~\ref{subsec:results:bounds}, formalizes the statement of Theorem~\ref{thm:intro:maininformal} and also allows for arbitrary mixing matrices in \eqref{eq:intro:fvdecomp}, meaning that the features of $\x \in \R^\p$ may be (even perfectly) correlated.
	In the course of our second main result, Theorem~\ref{thm:results:bounds:conic} in Subsection~\ref{subsec:results:conic}, we then revisit \ref{quest:intro:complexity} and refine the global Gaussian mean width $\meanwidth{\sset}$ in the first error term of \eqref{eq:intro:maininformal:bound}.
	Indeed, by using a \emph{localized} complexity parameter, it is possible to achieve the optimal error decay rate of $\asympfaster{n^{-1/2}}$ in terms of $n$.
	This improvement is of particular importance to \emph{high-dimensional} problems where the dimension of the data $\p$ is significantly larger that the actual sample size $n$.
	The proofs of our main results are postponed to Section~\ref{sec:proofs} and rely on an even more general estimation guarantee for the generalized Lasso, Theorem~\ref{thm:proofs:results:local}, which employs the so-called \emph{local} Gaussian mean width (see Subsection~\ref{subsec:proofs:results}); note that the more advanced statement of Theorem~\ref{thm:proofs:results:local} is omitted in the main part, as this would make the presentation of the article unnecessarily technical.
	But it is worth pointing out that all established results are novel and of independent interest, extending and improving previously known performance guarantees for the generalized Lasso; see also Remark~\ref{rmk:results:outofbox} and Remark~\ref{rmk:proofs:local:techniques} for more detailed discussions.
\item
	\emph{The mismatch principle.}
	A major consequence of our estimation guarantees in Section~\ref{sec:results} is the mismatch principle, which is formalized in Subsection~\ref{subsec:results:bounds} (see Recipe~\ref{rec:results:mismatch}).
	Its versatility is then demonstrated in Subsection~\ref{subsec:appl:nonlinear}, where it is applied to various types of semi-parametric observation models, including single-index models, generalized linear models, multiple-index models, and superimposed observations.
	While this leads to a rediscovery of some previous results from the literature, we will derive several new guarantees as well, e.g., for non-Gaussian single-index models and variable selection.
	Let us emphasize that these findings are also relevant to recent advances in \emph{quantized} and \emph{distributed compressed sensing}; see Subsection~\ref{subsec:appl:nonlinear:1bit} and Subsection~\ref{subsec:appl:nonlinear:msense}, respectively.
\item
	\emph{Correlated inputs and noisy data.}
	The importance of correlated input variables was already mentioned in the course of the factor model \eqref{eq:intro:fvdecomp}.
	We will return to these issues in Section~\ref{subsec:appl:correlated} and show that estimation via \eqref{eq:intro:klasso} may be still practicable if the mixing matrix $\atoms$ is just approximately known; see Subsection~\ref{subsec:appl:correlated:undercomplete}. 
	Another scenario of interest is input data contaminated by noise, so that \eqref{eq:intro:fvdecomp} holds with $\p < \d$; see Subsection~\ref{subsec:appl:correlated:overcomplete}.
	The mismatch principle again proves useful in this case, as the mismatch covariance naturally incorporates the signal-to-noise ratio of the underlying sampling process; in particular, this is another problem instance where the classical orthogonality principle does not remain valid (cf. \eqref{eq:intro:orthogprinciple}).
\end{listing}

Regarding related literature, our approach is inspired by the recent work of Plan and Vershynin \cite{plan2015lasso}, who studied the generalized Lasso \eqref{eq:results:bounds:klasso} with Gaussian single-index observations. In a certain sense, our work is an attempt to lift their findings onto a more abstract (statisical-learning-based) setup, which goes beyond the case of single-index models.
From a more technical perspective, an important difference to \cite{plan2015lasso} is that our proofs rely on more sophisticated uniform bounds for empirical quadratic and multiplier processes due to \cite{mendelson2016multiplier,liaw2016randommat}, allowing us to improve and generalize their performance guarantees into various directions.
A more detailed discussion of the related literature is provided in Section~\ref{sec:literature}. This does not only concern recent results in learning theory (Subsection~\ref{subsec:literature:statlearn}) but also advances in signal processing and compressed sensing (Subsection~\ref{subsec:literature:cs}) as well as strongly correlated designs (Subsection~\ref{subsec:literature:correl}).
Our concluding remarks and potential future research directions are presented in Section~\ref{sec:conlcusion}.
In fact, the findings of this work are amenable to many possible extensions and generalizations, such as general \mbox{(non-)}convex hypothesis classes, different types of loss functions, or heavy-tailed data.

Let us conclude this subsection with a brief clarification of the scope of our achievements:
\begin{remark}
	First, we wish to emphasize that the above mentioned contributions are not only of technical but also of conceptual nature.
	In this regard, the mismatch principle should be understood as a simple recipe to establish off-the-shelf guarantees for the generalized Lasso \eqref{eq:intro:klasso}.
	But despite this clear theoretical focus, our approach may have some interesting practical implications as well:
	While carefully tailored methods certainly perform better in specific model situations, our results provide evidence of why the ``naive'' strategy of \eqref{eq:intro:klasso} is still quite competitive. This particularly justifies why Lasso-type estimators do often yield a good initial guess of the true parameters, which can serve as initialization for more sophisticated algorithms.
	
	Although the formal setup of our approach fits well into the framework of statistical learning, it is helpful to bear in mind the following conceptual difference:
	Our goal is to explore what empirical risk minimization with \emph{linear} hypothesis functions (i.e., the generalized Lasso \eqref{eq:intro:klasso}) can learn about more complicated, possibly non-linear observation models. This type of misspecification precisely translates into the parameter estimation problem outlined in Subsection~\ref{subsec:intro:mismatch}.
	A major concern of traditional statistical learning, on the other hand, is to accurately \emph{predict} the value of an output variable. 
	In the case of non-linear models, it would be therefore more natural to select a hypothesis class consisting of appropriate \emph{non-linear} functions and then to investigate the predictive capacity of the resulting empirical risk minimization problem.
	This does also explain the important role of the prediction error in learning theory (see \eqref{eq:literature:statlearn:prederr}), which in turn is only of minor interest to our purposes.
\end{remark}

\subsection{Notations and Preliminaries}
\label{subsec:intro:notation}

Before proceeding with the main results, let us fix some notations and conventions that are frequently used in this work:
\begin{rmklist}
\item
	Vectors and matrices are denoted by lower- and uppercase boldface symbols, respectively. Their entries are indicated by subscript indices and lowercase letters, e.g., $\v = (v_1, \dots, v_\d) \in \R^\d$ for a vector and $\vec{M} = [m_{k,k'}] \in \R^{\d\times \d'}$ for a matrix.
	The (horizontal) \emph{concatenation} of $\vec{M} \in \R^{\d\times \d'}$ and $\tilde{\vec{M}} \in \R^{\d\times \d''}$ is denoted by $[\vec{M},\tilde{\vec{M}}] \in \R^{\d \times (\d' + \d'')}$. The \emph{identity matrix} in $\R^\d$ is $\I{\d} \in \R^{\d\times \d}$ and $\vunit_k \in \R^\d$ denotes the $k$-th \emph{unit vector}.
\item
	The \emph{support} of a vector $\v \in \R^\d$ is the index set of its non-zero components, i.e., $\supp(\v) \coloneqq \{ k \in \{1, \dots, \d\} \suchthat v_k \neq 0\}$, and we set $\lnorm{\v}[0] \coloneqq \cardinality{\supp(\v)}$. In particular, $\v$ is called \emph{$s$-sparse} if $\lnorm{\v}[0] \leq s$.
	For $q\geq 1$, the \emph{$\l{q}$-norm} of $\v$ is given by 
	\begin{equation}
		\lnorm{\v}[q] \coloneqq\begin{cases} 
		(\sum_{k=1}^\d \abs{v_k}^q)^{1/q}, &q < \infty \ ,\\
		\displaystyle\max_{k = 1, \dots, \d} \ \abs{v_k}, & q = \infty \ .
		\end{cases}  
	\end{equation}
	The associated \emph{unit ball} is denoted by $\ball[q][\d]\coloneqq\{\v \in \R^\d \suchthat \lnorm{\v}[q] \leq 1\}$ and the \emph{Euclidean unit sphere} is $\S^{\d-1} \coloneqq \{\v\in \R^\d \suchthat \lnorm{\v}=1\}$.
	The \emph{operator norm} of $\vec{M} \in \R^{\d \times \d'}$ is $\opnorm{\vec{M}} \coloneqq \sup_{\v' \in \S^{\d'-1}} \lnorm{\vec{M}\v'}$.
\item 
	Let $\ssetalt \subset \R^\d$ and $\v \in \R^\d$.
	The \emph{linear hull} of $\ssetalt$ is denoted by $\spann\ssetalt$, and its \emph{conic hull} is defined as $\cone{\ssetalt} \coloneqq \{\tau\v \suchthat \v \in \ssetalt,\ \tau \geq 0\}$.
	Moreover, we write $\ssetalt \pm \v \coloneqq \{ \tilde{\v} \pm \v \suchthat \tilde{\v} \in \ssetalt \}$.
	
	If $E \subset \R^\d$ is a linear subspace, the associated \emph{orthogonal projection} onto $E$ is denoted by $\proj_E \in \R^{\d \times \d}$, 
	and we write $\orthcompl{\proj}_{E} \coloneqq \I{\d} - \proj_E$ for the projection onto the orthogonal complement $\orthcompl{E} \subset \R^\d$. Moreover, if $E = \spann\{\v\}$, we use the short notations $\proj_{\v} \coloneqq \proj_{E}$ and $\orthcompl{\proj}_{\v} \coloneqq \orthcompl{\proj}_{E}$.
\item\label{item:intro:notation:probability}
	The \emph{expected value} is denoted by $\mean[\cdot]$ and we sometimes use a subscript to indicate that the expectation (integral) is only computed with respect to a certain random variable.
	Let $v$ be a real-valued random variable. Then $v$ is \emph{sub-Gaussian} if 
	\begin{equation}\label{eq:intro:subgnorm}
		\normsubg{v} \coloneqq \sup_{q \geq 1} q^{-1/2} (\mean[\abs{v}^q])^{1/q} < \infty \ ,
	\end{equation}
	and $\normsubg{\cdot}$ is called the \emph{sub-Gaussian norm} (cf. \cite{vershynin2012random}). The \emph{sub-Gaussian norm} of a random vector $\v$ in $\R^\d$ is then given by $\normsubg{\v} \coloneqq \sup_{\vec{u} \in \S^{\d-1}} \normsubg{\sp{\v}{\vec{u}}}$.
	Moreover, $\v$ is called \emph{isotropic} if $\mean[\v \v^\T] = \I{\d}$, or equivalently, if
	\begin{equation}\label{eq:intro:isotr}
		\mean[\sp{\v}{\vec{u}} \sp{\v}{\vec{u}'}] = \sp{\vec{u}}{\vec{u}'} \quad \text{for all $\vec{u}, \vec{u}' \in \R^\d$.}
	\end{equation}
	If $\tilde{v}$ is another random variable (not necessarily real-valued), we denote the expectation of $v$ conditional on $\tilde{v}$ by $\mean[v \suchthat \tilde{v}]$.
	Finally, we write $\v \distributed \Normdistr{\vnull}{\Covmatr}$ if $\v$ is a (centered) \emph{Gaussian random vector} with covariance matrix $\Covmatr \in \R^{\d \times \d}$.
\item
	The letter $C$ is reserved for a (generic) constant, whose value could change from time to time.
	We refer to $C > 0$ as a \emph{numerical constant} if its value does not depend on any other involved parameter.
	If an \mbox{(in-)equality} holds true up to a numerical constant $C$, we may simply write $A \lesssim B$ instead of $A \leq C \cdot B$, and if $C_1\cdot A\leq B \leq C_2 \cdot A$ for numerical constants $C_1,C_2 > 0$, we use the abbreviation $A\asymp B$.
\end{rmklist}

\section{Main Results}
\label{sec:results}

This part builds upon the key ideas from Subsection~\ref{subsec:intro:mismatch} and derives a formal version of the mismatch principle (Recipe~\ref{rec:results:mismatch}).
Starting with a definition of our model setup in Subsection~\ref{subsec:results:setup} and several technical parameters in Subsection~\ref{subsec:results:parameters}, our main results are presented in Subsection~\ref{subsec:results:bounds} (see Theorem~\ref{thm:results:bounds:global}) and Subsection~\ref{subsec:results:conic} (see Theorem~\ref{thm:results:bounds:conic}), respectively.
Note that this section rather takes an abstract viewpoint, whereas  applications and examples of our framework are elaborated in Section~\ref{sec:appl}.

\subsection{Formal Model Setup}
\label{subsec:results:setup}

Let us begin with a rigorous definition of the random sampling process considered in the introduction.
For the sake of convenience, we have summarized the our terminology in Table~\ref{tab:results:terminology}.
\begin{assumption}[Sampling process]\label{model:results:observations}
	Let $(\lat, y)$ be a joint random pair in $\R^\d \times \R$, where $y$ is a sub-Gaussian random variable and $\lat$ is a centered isotropic sub-Gaussian random vector in $\R^\d$ with $\normsubg{\lat} \leq \subgparam$ for some $\subgparam > 0$.
	Moreover, let $\atoms \in \R^{\p \times \d}$ be a (deterministic) matrix and define the input random vector in $\R^\p$ as $\x \coloneqq \atoms \lat$.
	The \emph{observed samples} $\{(\x_i, y_i)\}_{i = 1}^n$ are now generated as follows: Let $\{(\lat_i, y_i)\}_{i = 1}^n$ be independent copies of $(\lat, y)$. Then we set $\x_i \coloneqq \atoms \lat_i$ for $i = 1, \dots, n$. In particular, $\{(\x_i, y_i)\}_{i = 1}^n$ can be also regarded as independent samples of $(\x,y)$.
\end{assumption}

\begin{table}
\renewcommand{\arraystretch}{1.2}
\centering
\begin{tabularx}{\textwidth}{>{\raggedright}m{6cm}|m{4cm}|m{4cm}}
	\textbf{Term} & \textbf{Notation} & \textbf{Sampling notation} \\
 	\hline\hline
	{Output variable {\\ \smaller (dependent variable, response variable, label)}} & $y \in \R$ & $y_1, \dots, y_n$ \\ \hline
	{Latent variables {\\ \smaller (hidden variables, factors)}} & $\lat \in \R^\d$ & $\lat_1, \dots, \lat_n$ \\ \hline
	Mixing matrix {\\ \smaller (pattern matrix, factor loadings)} & $\atoms \in \R^{\p \times \d}$ & \\ \hline
	{Input variables {\\ \smaller (independent variables, explanatory variables, data variables, features, covariates, predictors)}} & $\x = \atoms \lat \in \R^\p$ & $\x_1 = \atoms\lat_1, \dots, \x_n = \atoms\lat_n$ \\ \hline
	{Observation {\\ \smaller (data, sample pair)}} & $(\x, y) = (\atoms\lat, y) \in \R^\p \times \R$ & $(\x_1, y_1), \dots, (\x_n, y_n)$ \\ \hline
	{Hypothesis set {\\ \smaller (constraint set, hypothesis class, models)}} & $\sset \subset \R^\p$ convex &  \\ \hline
	{Target set} & $\tset \subset \R^\d$ &  \\ \hline
	{(Latent) Parameter vectors} & $\xpv \in \sset$, \ $\latpv \in \atoms^\T \sset \subset \R^\d$ & 
\end{tabularx}
\caption{A summary of frequently used notations. The terms in parentheses are widely-used synonyms.}
\label{tab:results:terminology}
\end{table}

As usual in statistical learning, Assumption~\ref{model:results:observations} leaves the output variable $y$ largely unspecified and only makes a regularity assumption, namely that the tail of $y$ is sub-Gaussian.
But it is still useful to keep in mind our running examples from \eqref{eq:intro:sim} and \eqref{eq:intro:vs}, which indicate that $y$ typically depends on the latent factors $\lat$ according to a certain \mbox{(semi-)}parametric model.
Moreover, Assumption~\ref{model:results:observations} builds upon a simple linear factor model for the input variables, i.e., $\x = \atoms \lat$ with $\lat$ being isotropic.
The following proposition shows that this is actually not a severe restriction.
\begin{proposition}\label{prop:results:setup:modeldecomp}
	Let $\x$ be a centered random vector in $\R^\p$ whose covariance matrix is of rank $\d$. Then there exists a (non-unique) deterministic matrix $\atoms \in \R^{\p \times \d}$ and a centered isotropic random vector $\lat$ in $\R^\d$ such that $\x = \atoms \lat$ almost surely.
\end{proposition}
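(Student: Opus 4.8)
The plan is to read the decomposition off the spectral decomposition of the covariance matrix $\Covmatr \coloneqq \mean[\x\x^\T] \in \R^{\p\times\p}$, which is well-defined, symmetric, positive semi-definite, and of rank $\d$ by assumption. First I would take a reduced eigendecomposition $\Covmatr = \U\D\U^\T$, where $\D = \diag{\lambda_1, \dots, \lambda_\d} \in \R^{\d\times\d}$ collects the $\d$ strictly positive eigenvalues and $\U \in \R^{\p\times\d}$ has orthonormal columns spanning $E \coloneqq \ran(\Covmatr)$, so that $\U^\T\U = \I{\d}$ and $\U\U^\T = \proj_E$. Then I would set
\begin{equation}
	\atoms \coloneqq \U\D^{1/2} \in \R^{\p\times\d} \qquad \text{and} \qquad \lat \coloneqq \D^{-1/2}\U^\T\x \in \R^\d,
\end{equation}
which is well-defined since $\D$ is invertible; in particular $\atoms\atoms^\T = \U\D\U^\T = \Covmatr$, confirming that $\atoms$ is a (rectangular) square root of $\Covmatr$.

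Verifying that $\lat$ is centered and isotropic is routine. Linearity of the expectation gives $\mean[\lat] = \D^{-1/2}\U^\T\mean[\x] = \vnull$, and, using the identity $\U^\T\Covmatr\U = \U^\T\U\D\U^\T\U = \D$,
\begin{equation}
	\mean[\lat\lat^\T] = \D^{-1/2}\U^\T\mean[\x\x^\T]\U\D^{-1/2} = \D^{-1/2}\U^\T\Covmatr\U\D^{-1/2} = \D^{-1/2}\D\D^{-1/2} = \I{\d},
\end{equation}
which is the isotropy condition \eqref{eq:intro:isotr}.

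The only step that requires a genuine (if mild) argument is the almost-sure identity $\atoms\lat = \x$. Since $\atoms\lat = \U\D^{1/2}\D^{-1/2}\U^\T\x = \U\U^\T\x = \proj_E\x$, this is equivalent to showing that $\x \in E = \ran(\Covmatr)$ almost surely, i.e., that the component $\orthcompl{\proj}_E\x \coloneqq (\I{\p} - \proj_E)\x$ in the orthogonal complement vanishes with probability one. I would prove this by a second-moment computation: because the columns of $\U$ lie in $E$ we have $\orthcompl{\proj}_E\U = \vnull$, hence $\orthcompl{\proj}_E\Covmatr = \orthcompl{\proj}_E\U\D\U^\T = \vnull$, and therefore, using that $\orthcompl{\proj}_E$ is symmetric and idempotent together with cyclicity of the trace,
\begin{equation}
	\mean[\lnorm{\orthcompl{\proj}_E\x}^2] = \mean[\x^\T\orthcompl{\proj}_E\x] = \operatorname{tr}\!\big(\orthcompl{\proj}_E\mean[\x\x^\T]\big) = \operatorname{tr}(\orthcompl{\proj}_E\Covmatr) = 0.
\end{equation}
Consequently $\orthcompl{\proj}_E\x = \vnull$, i.e., $\x = \proj_E\x = \atoms\lat$, almost surely. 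Non-uniqueness is then immediate, as $(\atoms\vec{O}, \vec{O}^\T\lat)$ works just as well for any orthogonal $\vec{O} \in \R^{\d\times\d}$. I expect this last second-moment step --- pinning the mass of $\x$ onto $\ran(\Covmatr)$ --- to be the only non-formal part; the rest is elementary linear algebra.
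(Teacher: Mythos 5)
Your proposal is correct and follows essentially the same route as the paper: both construct $\atoms$ and $\lat$ from a reduced eigendecomposition of the covariance matrix (the paper writes the square root as $\D$ where you write $\D^{1/2}$) and then establish $\x = \atoms\lat$ almost surely by a second-moment argument showing the residual $\x - \atoms\lat = \orthcompl{\proj}_{E}\x$ has vanishing second moment. Your trace computation of $\mean[\lnorm{\orthcompl{\proj}_E\x}^2]$ is a slightly more streamlined version of the paper's direct computation of $\mean[(\x-\atoms\lat)(\x-\atoms\lat)^\T] = \vnull$, but the substance is identical.
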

A proof of Proposition~\ref{prop:results:setup:modeldecomp} is given in Subsection~\ref{subsec:proofs:modeldecomp}.
Nevertheless, this statement of existence is only of limited practical relevance because the resulting latent factors are not necessarily linked to $y$ in a meaningful way. Indeed, while the above isotropic decomposition of $\x$ is highly non-unique, the actual output variable may exhibit a very specific structure, such as in \eqref{eq:intro:sim} or \eqref{eq:intro:vs}.
We therefore rather take the viewpoint that the factorization $\x = \atoms \lat$ is prespecified, say by a certain data acquisition process or a physical law.
In particular, our purpose is \emph{not} to learn the entries of $\atoms$, but to consider them as possibly unknown model parameters (cf. Subsection~\ref{subsec:literature:correl}).
Finally, let us point out that the situation of $\p < \d$ can be also of interest, e.g., when modeling noisy data as in Subsection~\ref{subsec:appl:correlated:overcomplete}.

\subsection{The Mismatch Parameters and Gaussian Mean Width}
\label{subsec:results:parameters}

We now introduce the technical ingredients of our general error bounds below.
Let us begin with a formal definition of the mismatch parameters, which are supposed to quantify the model mismatch that occurs when approximating the true output $y$ by a linear hypothesis function of the form $\func(\lat) \coloneqq \sp{\lat}{\targetlatpv}$.
\begin{definition}[Mismatch parameters]\label{def:results:parameters:mismatch}
	Let Assumption~\ref{model:results:observations} be satisfied and let $\targetlatpv \in \R^\d$.
	Then we define the following two parameters (as a function of $\targetlatpv$):
	\begin{alignat}{2}
		&\text{\emph{Mismatch covariance:}} && \qquad \modelcovar{\targetlatpv} \coloneqq \modelcovar[\lat,y]{\targetlatpv} \coloneqq \lnorm[\big]{\mean[(y - \sp{\lat}{\targetlatpv}) \lat]} \ , \\*[.25\baselineskip]
		&\text{\emph{Mismatch deviation:}} && \qquad \modeldev{\targetlatpv} \coloneqq \modeldev[\lat,y]{\targetlatpv} \coloneqq \normsubg{y - \sp{\lat}{\targetlatpv}} \ .
	\end{alignat}
\end{definition}
The meaning of the mismatch covariance was already discussed in the course of Subsection~\ref{subsec:intro:mismatch}: it is useful to regard $\modelcovar{\targetlatpv}$ as a measure of how close $\targetlatpv$ is to satisfying the optimality condition of \eqref{eq:intro:orthogprinciple}, which corresponds to the classical orthogonality principle.
By contrast, the mismatch deviation of $\targetlatpv$ captures the sub-Gaussian tail behavior of the actual model mismatch $y - \sp{\lat}{\targetlatpv}$, and thus, it measures how strongly $y$ deviates from the linear output model $\sp{\lat}{\targetlatpv}$.
The meaning of this parameter is best illustrated by the case of noisy linear observations, i.e., $y = \sp{\lat}{\targetlatpv} + e$, so that $\modeldev{\targetlatpv} = \normsubg{e}$ would basically measure the power of noise.
Note that Definition~\ref{def:results:parameters:mismatch} does only rely on the latent factors $\lat$, but not on the mixing matrix $\atoms$.
The role of $\atoms$ will become clear in Subsection~\ref{subsec:results:bounds} below when combining the concepts of target and hypothesis sets.

Our next definition introduces the (Gaussian) mean width, which is a widely-used notion of complexity for hypothesis sets (cf. \ref{quest:intro:complexity} at the end of Subsection~\ref{subsec:intro:statlearn}).
\begin{definition}[Mean width]\label{def:results:parameters:meanwidth}
	Let $\ssetalt \subset \R^\d$ be a subset and let $\gaussian \distributed \Normdistr{\vec{0}}{\I{\d}}$ be a standard Gaussian random vector.
	The \emph{(global) mean width} of $\ssetalt$ is given by
	\begin{equation}
		\meanwidth{\ssetalt} \coloneqq \mean\Big[\sup_{\h \in \ssetalt} \sp{\gaussian}{\h}\Big].
	\end{equation}
	Moreover, we define the \emph{conic mean width} of $\ssetalt$ as
	\begin{equation}
		\meanwidth[\conic]{\ssetalt} \coloneqq \meanwidth{\cone{\ssetalt} \intersec \S^{\d-1}}.
	\end{equation}
\end{definition}
These parameters originate from geometric functional analysis and convex geometry (e.g., see \cite{gordon1985gaussian,gordon1988escape,giannopoulos2004asymptotic}), but they do also appear in equivalent forms as \emph{Talagrand’s $\gamma_2$-functional} in stochastic processes (cf. \cite{talagrand2014chaining}) or as \emph{Gaussian complexity} in learning theory (cf. \cite{bartlett2003complexity}).
The benefit of the Gaussian mean with in the statistical analysis of various signal estimation problems then emerged more recently, e.g., see \cite{mendelson2007reconstruction,rudelson2008sparse,stojnic2009gordon,chandrasekaran2012geometry,plan2013robust,amelunxen2014edge,tropp2014convex,vershynin2014estimation,oymak2016sharpmse}. We refer the reader to these works for a more extensive discussion and basic properties.
In what follows below, we will treat the (conic) mean width as an abstract quantity that measures the size of the hypothesis set $\sset$ in \eqref{eq:intro:klasso} and thereby determines the approximation rate of our error bounds.
Some concrete examples of hypothesis sets (related to sparsity priors) are presented in Subsection~\ref{subsec:appl:nonlinear:sset}.
Finally, it is worth mentioning that the conic mean width is essentially equivalent to the notion of \emph{statistical dimension}, which is well known for characterizing the phase transition behavior of many convex programs with Gaussian input data; see \cite{amelunxen2014edge} and particularly Proposition~10.2 therein.

\subsection{General Error Bounds and the Mismatch Principle}
\label{subsec:results:bounds}

For the sake of convenience, let us restate the generalized Lasso estimator which is in the focus of our theoretical study:
\begin{equation} \label{eq:results:bounds:klasso}\tag{$\mathsf{P}_{\sset}$}
	\min_{\xpv \in \sset} \ \tfrac{1}{n} \sum_{i = 1}^n (y_i - \sp{\x_i}{\xpv})^2.
\end{equation}
\newcommand{\refklasso}[1]{(\hyperref[eq:results:bounds:klasso]{$\mathsf{P}_{#1}$})}%
Our first error bound for \eqref{eq:results:bounds:klasso} is based on the global mean width and extends Theorem~\ref{thm:intro:maininformal} with regard to arbitrary mixing matrices. Its proof is contained in Subsection~\ref{subsec:proofs:results} and is a consequence of a more general result of independent interest (Theorem~\ref{thm:proofs:results:local}), which employs the concept of local mean width.
\begin{theorem}[Estimation via \eqref{eq:results:bounds:klasso} -- Global version]\label{thm:results:bounds:global}
	Let Assumption~\ref{model:results:observations} be satisfied.
	Let $\sset \subset \R^\p$ be a bounded, convex subset and fix a vector $\targetlatpv \in \atoms^\T\sset \subset \R^\d$.
	Then there exists a numerical constant $C > 0$ such that for every $\delta \in \intvopcl{0}{1}$, the following holds true with probability at least $1 - 5\exp(- C \cdot \subgparam^{-4} \cdot \delta^2 \cdot n)$:\footnote{More precisely, the probability refers to the samples $\{(\x_i, y_i)\}_{i = 1}^n$, and we condition on this random set in the assertion of Theorem~\ref{thm:results:bounds:global}.}
	If the number of observed samples obeys
	\begin{equation}\label{eq:results:bounds:global:meas}
		n \gtrsim \subgparam^4 \cdot \delta^{-4} \cdot \effdim{\atoms^\T\sset},
	\end{equation}
	then every minimizer $\solu\xpv$ of \eqref{eq:results:bounds:klasso} satisfies
	\begin{equation}\label{eq:results:bounds:global:bound}
		\lnorm{\atoms^\T\solu\xpv - \targetlatpv} \lesssim \max\{1, \subgparam \cdot \modeldev{\targetlatpv}\} \cdot \delta + \modelcovar{\targetlatpv}.
	\end{equation}
\end{theorem}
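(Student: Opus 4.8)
The plan is to transport the whole problem into the $\d$-dimensional latent space and there play the empirical quadratic process against the empirical multiplier process. Fix $\target{\xpv}\in\sset$ with $\atoms^\T\target{\xpv} = \targetlatpv$ and set $\h \coloneqq \atoms^\T\solu\xpv - \targetlatpv$; since $\sp{\x_i}{\xpv} = \sp{\lat_i}{\atoms^\T\xpv}$, the optimality of $\solu\xpv$ for \eqref{eq:results:bounds:klasso} together with $\target{\xpv}\in\sset$ yields, after expanding the squares, the \emph{basic inequality} $\tfrac1n\sum_{i=1}^n\sp{\lat_i}{\h}^2 \le \tfrac2n\sum_{i=1}^n\xi_i\sp{\lat_i}{\h}$ with $\xi_i \coloneqq y_i - \sp{\lat_i}{\targetlatpv}$, where $\h$ ranges in the bounded convex set $D \coloneqq \atoms^\T\sset - \targetlatpv \ni \vnull$. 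Because the empirical objective is convex along the segment joining $\target{\xpv}$ to $\solu\xpv$ and is not larger at $\solu\xpv$, the same inequality survives when $\h$ is replaced by $\tau\h$ for any $\tau\in\intvcl{0}{1}$. This is what makes localization possible: it suffices to derive a contradiction from the assumption $\lnorm{\h} > t^\ast$ for $t^\ast \coloneqq C_0(\max\{1,\subgparam\modeldev{\targetlatpv}\}\delta + \modelcovar{\targetlatpv})$, since then $\h' \coloneqq (t^\ast/\lnorm{\h})\h$ still obeys the basic inequality, lies in $D \intersec t^\ast\ball[2][\d]$, and satisfies $\lnorm{\h'} = t^\ast$.

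Next I would estimate both sides of the basic inequality uniformly over $D \intersec t^\ast\ball[2][\d]$, using the monotonicity $\meanwidth{D \intersec t^\ast\ball[2][\d]} \le \meanwidth{D} = \meanwidth{\atoms^\T\sset}$ and $\operatorname{rad}(D \intersec t^\ast\ball[2][\d]) \le t^\ast$. For the left-hand side I would apply the uniform lower bound for quadratic forms of isotropic sub-Gaussian vectors from \cite{liaw2016randommat}, which gives (with the probability claimed in the theorem) $\big(\tfrac1n\sum_i\sp{\lat_i}{\h'}^2\big)^{1/2} \ge \lnorm{\h'} - C\big(\subgparam^2\meanwidth{\atoms^\T\sset}/\sqrt n + \delta\,t^\ast\big)$, the second summand being the diameter-type deviation at the prescribed confidence level. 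For the right-hand side I would use the multiplier process inequality of \cite{mendelson2016multiplier}: combined with $\mean[\xi\sp{\lat}{\h'}] = \sp{\mean[(y - \sp{\lat}{\targetlatpv})\lat]}{\h'}$, $\lnorm{\mean[(y - \sp{\lat}{\targetlatpv})\lat]} = \modelcovar{\targetlatpv}$ and $\normsubg{\xi} = \modeldev{\targetlatpv}$, it produces $\tfrac1n\sum_i\xi_i\sp{\lat_i}{\h'} \le \modelcovar{\targetlatpv}\lnorm{\h'} + C\subgparam\modeldev{\targetlatpv}\big(\meanwidth{\atoms^\T\sset}/\sqrt n + \delta\,t^\ast\big)$.

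The sample-size condition \eqref{eq:results:bounds:global:meas} is calibrated so that $\subgparam^2\meanwidth{\atoms^\T\sset}/\sqrt n \lesssim \delta^2$, and since $\delta \le t^\ast$ by construction this is in turn $\lesssim \delta\,t^\ast$; hence both residual $\meanwidth{\atoms^\T\sset}/\sqrt n$ terms are absorbed into $\delta\,t^\ast$. Feeding the two estimates into the basic inequality at $\h'$, using $\lnorm{\h'} = t^\ast$ and squaring, gives $(t^\ast)^2 - C\delta\,(t^\ast)^2 \le \modelcovar{\targetlatpv}\,t^\ast + C\max\{1,\subgparam\modeldev{\targetlatpv}\}\,\delta\,t^\ast$; for $\delta$ below a small absolute constant the left-hand side is $\ge \tfrac12(t^\ast)^2$, so dividing by $t^\ast$ forces $t^\ast \le 2C(\modelcovar{\targetlatpv} + \max\{1,\subgparam\modeldev{\targetlatpv}\}\delta)$, contradicting the definition of $t^\ast$ once $C_0 > 2C$. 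Therefore $\lnorm{\h} \le t^\ast$, which is \eqref{eq:results:bounds:global:bound}. The probability statement then follows from a union bound over the constantly many events underlying the quadratic and multiplier process estimates, each failing with probability at most $\exp(-C\subgparam^{-4}\delta^2 n)$.

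The step I expect to be most delicate is controlling the diameter-type deviation terms in the two process inequalities: these do not vanish when one replaces the local by the global mean width, and it is precisely the convexity-driven restriction to the sphere $\lnorm{\h'} = t^\ast$ that keeps them proportional to $\delta\,t^\ast$ rather than to $\diam(\atoms^\T\sset)$. This localization is also the source of the fourth-root-in-$n$ decay: the left-hand side of the basic inequality contributes $(t^\ast)^2$ whereas the right-hand side is only linear in $t^\ast$, and since \eqref{eq:results:bounds:global:meas} only permits $\delta \gtrsim \subgparam\,(\effdim{\atoms^\T\sset}/n)^{1/4}$, the optimal choice of $\delta$ yields $\lnorm{\h} \lesssim (\effdim{\atoms^\T\sset}/n)^{1/4}$. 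Keeping the dependence on $\subgparam$ (and on the confidence level) transparent throughout is the other somewhat tedious point; it is cleanest to carry the computation out once at the level of a local-mean-width statement (Theorem~\ref{thm:proofs:results:local}) and then read off the global version stated here via the monotonicity $\meanwidth{D \intersec t\ball[2][\d]} \le \meanwidth{D}$.
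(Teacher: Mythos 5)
Your proposal is correct and follows essentially the same route as the paper: the basic inequality is the paper's decomposition of the excess risk into quadratic and multiplier terms, the two process bounds are exactly the cited results of Liaw et al.\ and Mendelson, the Cauchy--Schwarz step identifies the bias with $\modelcovar{\targetlatpv}$, and your rescaling of $\h$ onto the sphere $\lnorm{\h'}=t^\ast$ via convexity of the empirical risk is the same localization the paper performs by showing $\exloss(\cdot,\target\xpv)>0$ on $\sset_{\targetlatpv,t}$ and invoking convexity along the segment. Your closing remark — prove a local-mean-width statement once and read off the global version via $\meanwidth[t]{\atoms^\T\sset-\targetlatpv}\leq\tfrac1t\meanwidth{\atoms^\T\sset}$, which is the source of the $n^{-1/4}$ rate — is precisely how the paper organizes the argument (Theorem~\ref{thm:proofs:results:local} followed by the specialization).
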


The parameter $\delta$ can be regarded as an oversampling factor in the sense that it enables a trade-off between the number of required samples in \eqref{eq:results:bounds:global:meas} and the accuracy of \eqref{eq:results:bounds:global:bound}. 
Adjusting $\delta$ such that \eqref{eq:results:bounds:global:meas} holds true with equality, we can rephrase \eqref{eq:results:bounds:global:bound} as a more convenient error bound that explicitly depends on $n$:
\begin{equation}\label{eq:results:bounds:global:bound-decay}
	\lnorm{\atoms^\T\solu\xpv - \targetlatpv} \lesssim \max\{\subgparam, \subgparam^2 \cdot \modeldev{\targetlatpv}\} \cdot \Big(\frac{\effdim{\atoms^\T\sset}}{n}\Big)^{1/4} + \modelcovar{\targetlatpv}.
\end{equation}
This expression refines the statement of Theorem~\ref{thm:intro:maininformal} and reveals that first term in \eqref{eq:results:bounds:global:bound} is actually of the order $\asympfaster{n^{-1/4}}$. From this, we can particularly conclude that the first error term does not concern the consistency of the estimator, but rather controls its \emph{variance}. The mismatch covariance $\modelcovar{\targetlatpv}$, in contrast, can be interpreted as an \emph{(asymptotic) bias}, which explains why its size plays a crucial role when selecting an appropriate target vector $\targetlatpv$; see also Remark~\ref{rmk:results:outofbox}.
Remarkably, the individual parameters of \eqref{eq:results:bounds:global:bound-decay} are assigned to very different roles: while the mean width $\meanwidth{\atoms^\T\sset}$ only involves the (transformed) hypothesis set $\atoms^\T\sset$, the mismatch parameters $\modelcovar{\targetlatpv}$ and $\modeldev{\targetlatpv}$ do exclusively consider the output variable $y$ and the choice of target vector $\targetlatpv$. Apart from that, $\subgparam$ can be regarded as a model constant which bounds the sub-Gaussian tail of the input vector $\lat$.

Before further discussing the significance of the error bound \eqref{eq:results:bounds:global:bound} and deriving the mismatch principle (Recipe~\ref{rec:results:mismatch}), let us briefly investigate how the mixing matrix $\atoms$ affects the assertion of Theorem~\ref{thm:results:bounds:global}.
For this purpose, we first make the following important yet simple observation about the solution set of the generalized Lasso \eqref{eq:results:bounds:klasso}:
\begin{equation}\label{eq:results:bounds:transformklasso}
	\atoms^\T \cdot \Big(\argmin_{\xpv \in \sset} \ \tfrac{1}{n} \sum_{i = 1}^n (y_i - \sp{\x_i}{\xpv})^2 \Big) = \argmin_{\latpv \in \atoms^\T\sset} \ \tfrac{1}{n} \sum_{i = 1}^n (y_i - \sp{\lat_i}{\latpv})^2,
\end{equation}
which follows from the identity $\sp{\x_i}{\xpv} = \sp{\lat_i}{\atoms^\T\xpv}$ and substituting $\atoms^\T\xpv$ by $\latpv$.
As already pointed out in the course of \eqref{eq:intro:estimerror}, solving \eqref{eq:results:bounds:klasso} may lead to a highly non-unique minimizer, but by a linear transformation according to \eqref{eq:results:bounds:transformklasso}, it becomes related to an optimization problem that just takes \emph{isotropic} inputs $\lat_1, \dots, \lat_n \in \R^\d$.
This particularly explains why Theorem~\ref{thm:results:bounds:global} establishes an error bound on a parameter vector $\targetlatpv$ in the ``latent space'' $\R^\d$ and why the estimator actually takes the form $\solu\latpv \coloneqq \atoms^\T \solu\xpv$.
Moreover, \eqref{eq:results:bounds:transformklasso} indicates that, in order to better understand the estimation performance of \eqref{eq:results:bounds:klasso}, it essentially suffices to study
\begin{equation}\label{eq:results:bounds:klassoiso}
	\min_{\latpv \in \atoms^\T\sset} \ \tfrac{1}{n} \sum_{i = 1}^n (y_i - \sp{\lat_i}{\latpv})^2.
\end{equation}
The isotropy of data in \eqref{eq:results:bounds:klassoiso} indeed simplifies the statistical analysis, whereas the intricacy of the mixing matrix is now concealed by the transformed hypothesis set $\atoms^\T \sset$.
For that specific reason, the complexity of $\sset$ is measured by means of $\meanwidth{\atoms^\T\sset}$ in \eqref{eq:results:bounds:global:meas}, rather than $\meanwidth{\sset}$.

However, the original estimator \eqref{eq:results:bounds:klasso} is still of great practical interest because it neither requires knowledge of $\atoms$ nor of the $\lat_i$.
Even if $\atoms$ would be exactly known, an extraction of the latent factors can be difficult and unstable, implying that potential errors would propagate to the inputs of \eqref{eq:results:bounds:klassoiso}. 
Instead, \eqref{eq:results:bounds:transformklasso} suggests to first solve the ``unspoiled'' Lasso problem \eqref{eq:results:bounds:klasso} and then to appropriately reweight the outcome by $\atoms^\T$ as a post-processing step.
Our discussion on the mixing matrix is continued in Subsection~\ref{subsec:appl:correlated} and Subsection~\ref{subsec:literature:correl}, presenting several implications of Theorem~\ref{thm:results:bounds:global} and related approaches.
In this context, we demonstrate that estimation via \eqref{eq:results:bounds:klasso} is still feasible when $\atoms$ is only approximately known, and by carefully adapting the hypothesis set, one can even obtain error bounds for the untransformed minimizer $\solu\xpv$.

\subsubsection*{The Mismatch Principle}

Similar to the informal statement of Theorem~\ref{thm:intro:maininformal}, a key feature of Theorem~\ref{thm:results:bounds:global} is that it permits to freely select a target vector $\targetlatpv \in \atoms^\T\sset$.
This flexibility is an essential advantage over the traditional viewpoint of statistical learning theory, which would simply recommend to set $\targetlatpv = \soluexp\latpv \coloneqq \atoms^\T \soluexp\xpv$, where $\soluexp\func = \sp{\cdot}{\soluexp\xpv}$ is an expected risk minimizer of \eqref{eq:intro:explossmin-general}, see also \ref{quest:intro:estimation} at the end of Subsection~\ref{subsec:intro:statlearn}.
Indeed, according to \ref{quest:intro:approximation} and Subsection~\ref{subsec:intro:mismatch}, learning a semi-parametric model does also require a certain interpretability of the target vector, in the sense that it encodes the parameters of interest, e.g., an index vector (cf. \eqref{eq:intro:sim}) or the set of active variables (cf. \eqref{eq:intro:vs}).
For that purpose, we have introduced the abstract notion of a \emph{target set} $\tset \subset \R^\d$, which contains all those vectors that a user would consider as desirable outcome of an estimation procedure.\footnote{Note that the target set is part of the ``latent space'' $\R^\d$, which is due to the fact that we state semi-parameters output models for $y$ in terms of the latent factors $\lat \in \R^\d$ and not in terms of the correlated data $\x = \atoms\lat \in \R^\p$.}
It is therefore more natural to ask whether the generalized Lasso \eqref{eq:results:bounds:klasso} is capable of estimating any vector in $\tset$, rather than $\soluexp\latpv = \atoms^\T \soluexp\xpv$.
This important concern is precisely addressed by the mismatch principle:
\begin{recipe}[Mismatch principle]\label{rec:results:mismatch}
	Let Assumption~\ref{model:results:observations} be satisfied.	
	\begin{properties}[4em]{MP}
	\item\label{rec:results:mismatch:tset}
		\emph{Target set.} Select a subset $\tset \subset \R^\d$ containing all admissible target vectors associated with the model $(\lat,y)$.
	\item\label{rec:results:mismatch:sset}
		\emph{Hypothesis set.} Select a subset $\sset \subset \R^\p$ imposing structural constraints on the search space of \eqref{eq:results:bounds:klasso}.
	\item\label{rec:results:mismatch:modelcovar}
		\emph{Target vector.} Minimize the mismatch covariance on $\tset \intersec \atoms^\T \sset$, i.e., specify a minimizer $\targetlatpv$ of
		\begin{equation}\label{eq:results:mismatch:modelcovar:minimization}
			\min_{\latpv \in \tset \intersec \atoms^\T \sset} \modelcovar{\latpv}.
		\end{equation}
	\item\label{rec:results:mismatch:bound}
		\emph{Estimation performance.} Invoke \eqref{eq:results:bounds:global:bound} in Theorem~\ref{thm:results:bounds:global} to obtain a bound on the estimation error of $\targetlatpv$ via \eqref{eq:results:bounds:klasso}.
	\end{properties}
\end{recipe}
While the steps \ref{rec:results:mismatch:tset} and \ref{rec:results:mismatch:sset} are related to the underlying model specifications, the actual key step is \ref{rec:results:mismatch:modelcovar}: By restricting the objective set to $\tset \intersec \atoms^\T \sset$, we ensure that the target vector $\targetlatpv$ carries the desired (parametric) information and is compatible with the (transformed) hypothesis set $\atoms^\T \sset$ of the optimization problem \eqref{eq:results:bounds:klassoiso}.
Under these constraints every minimizer of the mismatch covariance then leads to the smallest possible bias term in \eqref{eq:results:bounds:global:bound}.
Meanwhile, the sampling-related term in \eqref{eq:results:bounds:global:bound} is only slightly affected by the size of the mismatch deviation $\modeldev{\targetlatpv}$.
In other words, the mismatch principle simply selects an admissible target vector in a such way that Theorem~\ref{thm:results:bounds:global} yields the best outcome.
Studying the significance of the resulting error bound eventually allows us to draw a conclusion on the ability of the generalized Lasso \eqref{eq:results:bounds:klasso} to solve the examined estimation problem or not.
We close this subsection with a useful simplification of Recipe~\ref{rec:results:mismatch} that is applicable whenever one has $\tset \subset \cone{\atoms^\T\sset}$:
\begin{remark}[How to apply the mismatch principle?]\leavevmode
\begin{rmklist}
\item\label{rmk:results:bounds:mismatchpractical}
	Invoking Recipe~\ref{rec:results:mismatch}\ref{rec:results:mismatch:modelcovar} can be a challenging step because $\modelcovar{\cdot}$ needs to be minimized on the intersection $\tset \intersec \atoms^\T \sset$.
	On the other hand, the hypothesis set $\sset$ can be freely chosen in practice.
	Thus, as long as $\tset \subset \cone{\atoms^\T\sset}$, one can simplify the mismatch principle as follows:
	\begin{quote}\itshape	
		First minimize the mismatch covariance on the target set $\tset$, i.e., set $\targetlatpv = \argmin_{\latpv \in \tset} \modelcovar{\latpv}$. Then specify a scaling factor $\lambda > 0$ for the hypothesis set $\sset$ such that $\targetlatpv \in \atoms^\T(\lambda\sset)$.
	\end{quote}
	The target vector $\targetlatpv$ is now admissible for step \ref{rec:results:mismatch:modelcovar}, when using $\lambda\sset$ as hypothesis set.
	Consequently, \eqref{eq:results:bounds:global:bound} states an error bound for the rescaled Lasso estimator \refklasso{\lambda\sset}, while the number of required samples is only enlarged by a quadratic factor of $\lambda^2$.
	To some extent, the above strategy allows us to decouple the tasks of minimizing the mismatch covariance (on $\tset$) and selecting an appropriate hypothesis set. This simplification will turn out to be very beneficial in Subsection~\ref{subsec:appl:nonlinear}, where we apply the mismatch principle to various types of output models but leave $\sset$ unspecified. 
	
	However, let us emphasize that the choice of $\sset$ and the scaling parameter $\lambda$ both remain important practical issues.
	In view of our approach, it is helpful to consider the hypothesis set as a means to exploit additional structural knowledge about the observation model.
	A typical example is a \emph{sparse} single-index model, for which the index vector $\trulatpv \in \R^\d$ in \eqref{eq:intro:sim} is unknown but its support is small. In this case, the uncertainty about $\trulatpv$ would be just captured by $\tset \coloneqq \spann\{\trulatpv\}$, whereas an $\l{1}$-constraint could serve as sparsity prior (cf. Subsection~\ref{subsec:appl:nonlinear:sset}).
	But nevertheless, finding a suitable value for $\lambda$ is not straightforward in practice and might require careful tuning of the estimator, since the target vector $\targetlatpv$ is still unknown in general.
\item \label{rmk:results:bounds:mpgeneric}
	Let us point out that the step \ref{rec:results:mismatch:bound} in Recipe~\ref{rec:results:mismatch} is not specifically tailored to Theorem~\ref{thm:results:bounds:global}, but one could also use the improved guarantees of Theorem~\ref{thm:results:bounds:conic} and Theorem~\ref{thm:proofs:results:local} instead (see below and Subsection~\ref{subsec:proofs:results}). In that sense, the mismatch principle is a \emph{generic} recipe describing general instructions to obtain error bounds for \eqref{eq:results:bounds:klasso}. In particular, it brings the concept of target sets into play --- which plays no role in Theorem~\ref{thm:results:bounds:global} per se --- thereby leading to \emph{interpretable} performance guarantees. On the other hand, let us emphasize once again that, since the target set is unknown in practice, the mismatch principle is rather a \emph{theoretical} tool that is intended for the statistical analysis of the generalized Lasso with semi-parametric observation models. \qedhere
\end{rmklist} \label{rmk:results:bounds}
\end{remark}

\subsection{Refined Error Bounds Based on the Conic Mean Width}
\label{subsec:results:conic}

The above discussion on Theorem~\ref{thm:results:bounds:global} indicates that the hypothesis set $\sset$ does only play a minor role for the estimation performance of the generalized Lasso \eqref{eq:results:bounds:klasso}, see also \ref{quest:intro:complexity} at the end of Subsection~\ref{subsec:intro:statlearn}. 
This fallacious impression is especially due to the fact that the complexity parameter $\meanwidth{\atoms^\T\sset}$ in \eqref{eq:results:bounds:global:meas} does not explicitly depend on $\targetlatpv$. The proposed sample size therefore remains widely unaffected by the choice of the target vector, but on the other hand, the involved oversampling factor of $\delta^{-4}$ is clearly suboptimal.
Our next main result tackles this issue and in fact achieves the optimal factor of $\delta^{-2}$.
The key difference to Theorem~\ref{thm:results:bounds:global} is that the complexity of the (transformed) hypothesis set $\atoms^\T\sset$ is now measured \emph{locally} at the target vector $\targetlatpv$.
\begin{theorem}[Estimation via \eqref{eq:results:bounds:klasso} -- Conic version]\label{thm:results:bounds:conic}
	Let Assumption~\ref{model:results:observations} be satisfied.
	Let $\sset \subset \R^\p$ be a convex subset and fix a vector $\targetlatpv \in \atoms^\T\sset \subset \R^\d$.
	Then there exists a numerical constant $C > 0$ such that for every $\delta \in \intvopcl{0}{1}$, the following holds true with probability at least $1 - 5\exp(- C \cdot \subgparam^{-4} \cdot \delta^2 \cdot n)$:
	If the number of observed samples obeys
	\begin{equation}\label{eq:results:bounds:conic:meas}
		n \gtrsim \subgparam^4 \cdot \delta^{-2} \cdot \effdim[\conic]{\atoms^\T\sset - \targetlatpv},
	\end{equation}
	then every minimizer $\solu\xpv$ of \eqref{eq:results:bounds:klasso} satisfies
	\begin{equation}\label{eq:results:bounds:conic:bound}
		\lnorm{\atoms^\T\solu\xpv - \targetlatpv} \lesssim \subgparam^{-1} \cdot \modeldev{\targetlatpv} \cdot \delta + \modelcovar{\targetlatpv}.
	\end{equation}
\end{theorem}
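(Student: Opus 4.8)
The plan is to reduce to the isotropic problem \eqref{eq:results:bounds:klassoiso} and then run the standard \emph{basic inequality} argument for constrained least squares, with the complexity localised in the descent cone at $\targetlatpv$. First I would invoke the transformation identity \eqref{eq:results:bounds:transformklasso}: any minimiser $\solu\xpv$ of \eqref{eq:results:bounds:klasso} gives rise to $\solu\latpv \coloneqq \atoms^\T\solu\xpv$, which minimises $\latpv \mapsto \tfrac1n\sum_{i=1}^n(y_i - \sp{\lat_i}{\latpv})^2$ over $\atoms^\T\sset$. Since $\targetlatpv \in \atoms^\T\sset$, comparing the value of this objective at $\solu\latpv$ with its value at $\targetlatpv$, expanding the squares, and writing $\h \coloneqq \solu\latpv - \targetlatpv$ and $\xi_i \coloneqq y_i - \sp{\lat_i}{\targetlatpv}$, yields
\[ \tfrac1n\sum_{i=1}^n\sp{\lat_i}{\h}^2 \;\le\; \tfrac2n\sum_{i=1}^n\xi_i\sp{\lat_i}{\h}. \]
If $\h = \vnull$ we are done; otherwise set $r \coloneqq \lnorm{\h} > 0$ and $\vdir \coloneqq \h / r$. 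Since $\atoms^\T\sset$ is convex and contains both $\solu\latpv$ and $\targetlatpv$, the direction $\vdir$ lies in $\ssetalt \coloneqq \cone{\atoms^\T\sset - \targetlatpv} \intersec \S^{\d-1}$, whose mean width is $\meanwidth{\ssetalt} = \meanwidth[\conic]{\atoms^\T\sset - \targetlatpv}$ (Definition~\ref{def:results:parameters:meanwidth}). Dividing the basic inequality by $r$ and estimating the two sides uniformly over $\ssetalt$ gives
\[ r \;\le\; \frac{2\sup_{\vdir\in\ssetalt}\tfrac1n\sum_{i=1}^n\xi_i\sp{\lat_i}{\vdir}}{\inf_{\vdir\in\ssetalt}\tfrac1n\sum_{i=1}^n\sp{\lat_i}{\vdir}^2}. \]

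It then remains to control two empirical processes over the localised, sphere-bounded index set $\ssetalt$. For the denominator I would use that $\lat$ is isotropic, so $\mean\sp{\lat}{\vdir}^2 = 1$ for $\vdir\in\ssetalt$, together with the matrix deviation inequality for sub-Gaussian designs \cite{liaw2016randommat} applied to the radius-one set $\ssetalt$: this gives $\inf_{\vdir\in\ssetalt}\tfrac1n\sum_{i=1}^n\sp{\lat_i}{\vdir}^2 \ge \tfrac14$ on an event of probability at least $1 - C\exp(-C'\subgparam^{-4}\delta^2 n)$, precisely under the sample-size condition \eqref{eq:results:bounds:conic:meas}. For the numerator I would split off the mean: with $\vec m \coloneqq \mean[(y - \sp{\lat}{\targetlatpv})\lat]$ one has $\tfrac1n\sum_i\xi_i\sp{\lat_i}{\vdir} = \sp{\tfrac1n\sum_i\xi_i\lat_i - \vec m}{\vdir} + \sp{\vec m}{\vdir}$, where $\sp{\vec m}{\vdir} \le \lnorm{\vec m} = \modelcovar{\targetlatpv}$ because $\vdir\in\S^{\d-1}$. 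The remaining centred term is an empirical multiplier process with sub-Gaussian multipliers ($\normsubg{\xi_i} = \modeldev{\targetlatpv}$) and sub-Gaussian feature vectors $\lat_i$; by Mendelson's multiplier inequality \cite{mendelson2016multiplier}, its supremum over $\ssetalt$ is, with probability at least $1 - C\exp(-C'\subgparam^{-4}\delta^2 n)$, bounded by a constant times $\subgparam\,\modeldev{\targetlatpv}\cdot\meanwidth{\ssetalt}/\sqrt n$ plus a matching deviation term, which, using $\meanwidth{\ssetalt} = \meanwidth[\conic]{\atoms^\T\sset - \targetlatpv} \lesssim \subgparam^{-2}\delta\sqrt n$ (again by \eqref{eq:results:bounds:conic:meas}), is at most $C\subgparam^{-1}\modeldev{\targetlatpv}\cdot\delta$. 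Inserting these two bounds into the displayed estimate for $r$ and taking a union bound over the two events (jointly of probability at least $1 - 5\exp(-C\subgparam^{-4}\delta^2 n)$ after adjusting constants) yields $\lnorm{\atoms^\T\solu\xpv - \targetlatpv} = r \lesssim \subgparam^{-1}\modeldev{\targetlatpv}\cdot\delta + \modelcovar{\targetlatpv}$, which is \eqref{eq:results:bounds:conic:bound}.

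The hard part will be the uniform control of the centred multiplier process in the numerator with the correct dependence on $\modeldev{\targetlatpv}$, $\subgparam$, and above all the right tail. Because each summand $\xi_i\sp{\lat_i}{\vdir}$ is a product of two sub-Gaussian random variables, it is only sub-exponential, so the process is not itself sub-Gaussian and one cannot simply chain it against the Gaussian mean width; this is where I would lean on the refined multiplier-process machinery of \cite{mendelson2016multiplier} (or carry out a truncation/peeling argument) to obtain both the clean $\meanwidth[\conic]{\cdot}/\sqrt n$ scaling and a deviation tail of the form $\exp(-c\delta^2 n/\subgparam^4)$ matching that of the quadratic process. A more routine but still essential point is the bookkeeping that improves the oversampling factor from the $\delta^{-4}$ of Theorem~\ref{thm:results:bounds:global} to the optimal $\delta^{-2}$: localising the analysis at $\targetlatpv$ replaces the global set $\atoms^\T\sset$ by the subset $\ssetalt$ of the unit sphere, so the radius-type contribution in both deviation inequalities is $O(1)$ rather than the (possibly large) diameter of $\atoms^\T\sset$, and one may take the deviation level proportional to $\delta$ rather than $\delta^2$. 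Finally, running exactly the same argument with the \emph{local} mean width of $\atoms^\T\sset$ at $\targetlatpv$ in place of its conic mean width would give the sharper Theorem~\ref{thm:proofs:results:local}, of which the present statement is the conic specialisation.
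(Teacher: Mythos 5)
Your argument is correct and relies on the same two concentration results as the paper (Mendelson's multiplier-process bound \cite{mendelson2016multiplier} and the matrix deviation inequality of \cite{liaw2016randommat}), but the architecture is genuinely different. The paper does not prove the conic version directly: it first establishes the more general Theorem~\ref{thm:proofs:results:local}, whose proof fixes a scale $t$, shows that the excess risk $\exloss(\xpv,\target\xpv)=\quadrterm{\atoms^\T\xpv,\targetlatpv}+\multiplterm{\atoms^\T\xpv,\targetlatpv}$ is strictly positive uniformly on the sphere $\sset_{\targetlatpv,t}$, and then uses convexity of $\lossemp{}$ along line segments to push the minimizer into the ball $\sset_{\targetlatpv,<t}$; the conic theorem is then obtained by choosing $t$ equal to the right-hand side of \eqref{eq:results:bounds:conic:bound} and invoking Lemma~\ref{lem:proofs:results:mwlocalconic}. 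You instead run the basic inequality at the minimizer, normalize the error direction into the scale-invariant index set $\cone{\atoms^\T\sset-\targetlatpv}\intersec\S^{\d-1}$, and solve for $r=\lnorm{\h}$ directly; this is cleaner for the conic statement because it dispenses with the sphere-localization and the line-segment argument entirely, and it even absorbs the exact-recovery edge case $\modeldev{\targetlatpv}=\modelcovar{\targetlatpv}=0$ that the paper must treat separately. What the paper's detour buys is generality: the scale-free normalization you use is tied to the cone, whereas the local-mean-width version requires either the paper's convexity argument or the dichotomy you hint at (if $\lnorm{\h}\geq t$ then $\h/\lnorm{\h}\in\tfrac{1}{t}(\atoms^\T\sset-\targetlatpv)\intersec\S^{\d-1}$ by star-shapedness), which you assert but do not carry out; since the statement at hand is only the conic version, this does not affect the correctness of your proof.
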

Analogously to \eqref{eq:results:bounds:global:bound-decay}, we can restate \eqref{eq:results:bounds:conic:bound} as a sample-dependent error bound:
\begin{equation}\label{eq:results:bounds:conic:bound-decay}
	\lnorm{\atoms^\T\solu\xpv - \targetlatpv} \lesssim \subgparam \cdot \modeldev{\targetlatpv} \cdot \Big(\frac{\effdim[\conic]{\atoms^\T\sset - \targetlatpv}}{n}\Big)^{1/2} + \modelcovar{\targetlatpv}.
\end{equation}
In certain special cases, this error estimate is even minimax optimal up to a constant, see \cite[Sec.~4]{plan2014highdim} for a detailed study in the setup of Gaussian single-index models.\footnote{More precisely, the optimality result of \cite[Sec.~4]{plan2014highdim} relies on the following argument: If the output $y$ would be linear in $\lat$ with independent Gaussian noise, no estimator can achieve a better rate. Thus, if $y$ obeys a non-linear (and therefore more ``complicated'') model, one cannot expect a better estimation performance than in the linear case.}
The price to pay for the improved decay rate of $\asympfaster{n^{-1/2}}$ in \eqref{eq:results:bounds:conic:bound-decay} is the presence of a more complicated complexity parameter. Indeed, using the conic mean width $\meanwidth[\conic]{\atoms^\T\sset - \targetlatpv}$ instead of its global counterpart $\meanwidth{\atoms^\T\sset}$ comes along with two difficulties:
\begin{listing}
\item
	\emph{Computability.} The conic mean width is a quite implicit parameter that is exactly computable only in special cases, e.g., see Subsection~\ref{subsec:appl:nonlinear:sset}.
	This issue is further complicated by the deformation of mixing matrix $\atoms$:
	Our primary goal is to exploit low-complexity features of $\targetlatpv$, whereas the actual hypothesis set $\sset$ merely restricts the search space of \eqref{eq:results:bounds:klasso}.
	Hence, in order to compute or accurately bound $\meanwidth[\conic]{\atoms^\T\sset - \targetlatpv}$, the impact of $\atoms^\T$ needs to be precisely understood. 
\item
	\emph{Stability.} The value of $\meanwidth[\conic]{\atoms^\T\sset - \targetlatpv}$ is highly sensitive to the position of $\targetlatpv$ within $\atoms^\T\sset$. For example, if the set $\atoms^\T\sset$ is full-dimensional and $\targetlatpv$ does not lie exactly on its boundary, one would simply end up with $\effdim[\conic]{\atoms^\T\sset - \targetlatpv} \asymp \d$. This leads to an overly pessimistic error bound in \eqref{eq:results:bounds:conic:bound-decay}, which does not reflect any benefit of using a ``small'' hypothesis set.
\end{listing}
The latter point particularly implies a challenging tuning problem: compared to the approach of Remark~\ref{rmk:results:bounds}\ref{rmk:results:bounds:mismatchpractical}, it does not suffice to just achieve $\targetlatpv \in \atoms^\T(\lambda\sset)$ for some $\lambda > 0$, but one even requires that $\targetlatpv \in \boundary{(\atoms^\T(\lambda\sset))}$.
Fortunately, such a perfect tuning is usually not necessary in practice and the generalized Lasso \eqref{eq:results:bounds:klasso} turns out to be quite stable under these types of model inaccuracies.
The above issue of stability is in fact rather an artifact of employing the conic mean width as complexity measure.
Based on the refined notion of \emph{local mean width} (see Definition~\ref{def:proofs:results:localmw}), we prove a more general version of Theorem~\ref{thm:results:bounds:conic} in Subsection~\ref{subsec:proofs:local} which does not suffer from this drawback.
This refined error estimate (Theorem~\ref{thm:proofs:results:local}) actually forms the basis for proving Theorem~\ref{thm:results:bounds:global} as well as Theorem~\ref{thm:results:bounds:conic}, and in principle, it allows us to establish more advanced results, e.g., on the stability of \eqref{eq:results:bounds:klasso}.
However, for the sake of brevity, we have decided to omit a detailed discussion on these extensions and refer the interested reader to \cite[Sec.~6.1~+~Lem.~A.2]{genzel2017cosparsity}.

According to Remark~\ref{rmk:results:bounds}\ref{rmk:results:bounds:mpgeneric}, Recipe~\ref{rec:results:mismatch}\ref{rec:results:mismatch:bound} is of course also applicable to Theorem~\ref{thm:results:bounds:conic}, but as pointed out above, the simplification of Remark~\ref{rmk:results:bounds}\ref{rmk:results:bounds:mismatchpractical} would only work poorly in this case.
To avoid unnecessary technicalities, the examples of the mismatch principle presented in Section~\ref{sec:appl} are therefore mainly based on an application of Theorem~\ref{thm:results:bounds:global}.
We close this section with a more technical discussion of the key differences between the above results and previously known error bounds for the (generalized) Lasso from the statistical learning theory and signal processing:
\begin{remark}\label{rmk:results:outofbox}
	Using the isotropy of the latent factors $\lat$ and the orthogonal decomposition
	\begin{equation}
		\lat = \sp{\lat}{\tfrac{\targetlatpv}{\lnorm{\targetlatpv}}} \tfrac{\targetlatpv}{\lnorm{\targetlatpv}} + \orthcompl{\proj}_{\targetlatpv} \lat,
	\end{equation}
	one can rephrase the mismatch covariance as follows:
	\begin{align}
		\modelcovar{\targetlatpv} &= \lnorm[\big]{\mean[(y - \sp{\lat}{\targetlatpv}) \lat]} = \lnorm[\big]{\mean[y \lat] - \mean[\sp{\lat}{\targetlatpv}\lat]} \\*
		&= \lnorm[\big]{\mean[y \lat] - \mean[\sp{\lat}{\targetlatpv} \big( \sp{\lat}{\tfrac{\targetlatpv}{\lnorm{\targetlatpv}}} \tfrac{\targetlatpv}{\lnorm{\targetlatpv}} + \orthcompl{\proj}_{\targetlatpv} \lat \big)]} \\
		&= \lnorm[\big]{\mean[y \lat] - \underbrace{\mean[\sp{\lat}{\targetlatpv}^2]}_{= \lnorm{\targetlatpv}^2} \tfrac{\targetlatpv}{\lnorm{\targetlatpv}^2} } = \lnorm[\big]{\mean[y \lat] - \targetlatpv } \ .
	\end{align}
	In other words, $\modelcovar{\targetlatpv}$ simply measures the Euclidean distance of $\targetlatpv$ to $\soluexp\latpv \coloneqq \mean[y \lat] \in \R^\d$; and as long as $\mean[y \lat] \in \atoms^\T \sset$, every parameter vector $\soluexp\xpv \in \sset$ with $\soluexp\latpv = \atoms^\T \soluexp\xpv$ forms an expected risk minimizer of \eqref{eq:intro:explossmin-linear}.
	Hence, a similar error bound as in Theorem~\ref{thm:results:bounds:global} and Theorem~\ref{thm:results:bounds:conic} could be derived by first applying the triangle inequality,
	\begin{equation}
		\lnorm{\atoms^\T\solu\xpv - \targetlatpv} \leq \lnorm{\atoms^\T\solu\xpv - \soluexp\latpv} + \lnorm{\soluexp\latpv - \targetlatpv} = \lnorm{\atoms^\T\solu\xpv - \soluexp\latpv} + \modelcovar{\targetlatpv},
	\end{equation}
	and then invoking a known guarantee from statistical learning theory, e.g., from \cite{mendelson2014learning}, to control the error term $\lnorm{\atoms^\T\solu\xpv - \soluexp\latpv}$.
	However, this simple strategy does not lead to the outcome of Theorem~\ref{thm:results:bounds:global} and Theorem~\ref{thm:results:bounds:conic} for two main reasons: firstly, we do not require that $\soluexp\latpv = \mean[y \lat] \in \atoms^\T \sset$; and secondly, we capture the complexity of the hypothesis set locally at $\targetlatpv$ in \eqref{eq:results:bounds:conic:meas}, which can be very different from an evaluation at $\soluexp\latpv$, as it is usually done.\footnote{Note that the second aspect does not become obvious from the ``global'' result of Theorem~\ref{thm:results:bounds:global}, whereas it becomes crucial when aiming at improved (optimal) decay rates such as in Theorem~\ref{thm:results:bounds:conic} and Theorem~\ref{thm:proofs:results:local}.}
	Consequently, the above performance guarantees are not off-the-shelf corollaries of previous findings in the statistical learning literature, but rather form a refinement of those.
	Nevertheless, it is still useful to bear in mind the equivalent formulation $\modelcovar{\targetlatpv} = \lnorm[\big]{\mean[y \lat] - \targetlatpv }$, as it clarifies the statistical role of the mismatch covariance, i.e., an asymptotic bias that arises when estimating $\targetlatpv$ instead of $\mean[y \lat]$.
\end{remark}

\section{Applications and Examples}
\label{sec:appl}

This section elaborates several applications of the general framework developed in Section~\ref{sec:results}.
In the first part (Subsection~\ref{subsec:appl:nonlinear}), we still focus on isotropic input data (i.e., $\atoms = \I{\p}$ in Assumption~\ref{model:results:observations}) and apply the mismatch principle to various types of semi-parametric output models. 
In particular, we revisit our two prototypical examples from the introduction, that is, single-index models \eqref{eq:intro:sim} and variable selection \eqref{eq:intro:vs}.
Subsection~\ref{subsec:appl:correlated} is then devoted to issues and challenges associated with correlated feature variables (i.e., $\atoms \neq \I{\p}$), distinguishing the underdetermined ($\p \geq \d$) and overdetermined case ($\p < \d$).

\subsection{Semi-Parametric Output Rules}
\label{subsec:appl:nonlinear}

In the following subsections, we investigate several popular examples of semi-parametric models. While the underlying output rules for $y$ actually rely on very different structural assumptions, the common goal is to study the capability of the generalized Lasso \eqref{eq:results:bounds:klasso} to estimate a certain set of ``ground truth'' parameters. In this context, the mismatch principle (Recipe~\ref{rec:results:mismatch}) turns out to be very useful, as it allows us to deal with each of the cases in a highly systematic way.

In order to keep the exposition simple and clear, we will only consider the situation of isotropic input vectors, i.e., Assumption~\ref{model:results:observations} is fulfilled with $\atoms = \I{\d}$.
Consequently, it holds that $\x = \lat$ and $\x_i = \lat_i$ for $i = 1, \dots, n$, and $\solu\xpv = \solu\latpv$.
Adopting the notation from Subsection~\ref{subsec:intro:mismatch}, we particularly agree on writing $\lat$, $\lat_i$, $\latpv$, $\solu\latpv$ instead of $\x$, $\x_i$, $\xpv$, $\solu\xpv$, respectively, so that the estimator \eqref{eq:results:bounds:klasso} takes the form
\begin{equation} \label{eq:appl:nonlinear:klassoiso}\tag{$\mathsf{P}_{\sset}^{\text{iso}}$}
	\min_{\latpv \in \sset} \ \tfrac{1}{n} \sum_{i = 1}^n (y_i - \sp{\lat_i}{\latpv})^2.
\end{equation}
For the sake of convenience, let us also restate Theorem~\ref{thm:results:bounds:global} in this specific setup (which is in fact the formal version of Theorem~\ref{thm:intro:maininformal}):
\begin{corollary}[Estimation via \eqref{eq:results:bounds:klasso} -- Isotropic data]\label{cor:appl:nonlinear:global}
	Let Assumption~\ref{model:results:observations} be satisfied with $\atoms = \I{\d}$.
	Let $\sset \subset \R^\d$ be a bounded, convex subset and fix a vector $\targetlatpv \in \sset$.
	Then there exists a numerical constant $C > 0$ such that for every $\delta \in \intvopcl{0}{1}$, the following holds true with probability at least $1 - 5\exp(- C \cdot \subgparam^{-4} \cdot \delta^2 \cdot n)$:
	If the number of observed samples obeys
	\begin{equation}\label{eq:appl:nonlinear:global:meas}
		n \gtrsim \subgparam^4 \cdot \delta^{-4} \cdot \effdim{\sset},
	\end{equation}
	then every minimizer $\solu\latpv$ of \eqref{eq:appl:nonlinear:klassoiso} satisfies
	\begin{equation}\label{eq:appl:nonlinear:global:bound}
		\lnorm{\solu\latpv - \targetlatpv} \lesssim \max\{1, \subgparam \cdot \modeldev{\targetlatpv}\} \cdot \delta + \modelcovar{\targetlatpv}.
	\end{equation}
\end{corollary}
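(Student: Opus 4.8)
The plan is to obtain Corollary~\ref{cor:appl:nonlinear:global} as an immediate specialization of Theorem~\ref{thm:results:bounds:global} to the case $\atoms = \I{\d}$; there is no new analytic content, so the proof amounts to checking that every quantity appearing in the general statement collapses correctly under this substitution. To begin, note that with $\atoms = \I{\d}$ we necessarily have $\p = \d$ and $\x = \atoms\lat = \lat$, so Assumption~\ref{model:results:observations} for this corollary says precisely that $\lat$ itself plays the role of the centered isotropic sub-Gaussian vector with $\normsubg{\lat} \leq \subgparam$ required in Theorem~\ref{thm:results:bounds:global}, and that $\{(\x_i, y_i)\}_{i=1}^n = \{(\lat_i, y_i)\}_{i=1}^n$ are independent copies of $(\lat, y)$. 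Hence the hypotheses of Theorem~\ref{thm:results:bounds:global} are met verbatim, and (as already remarked in the excerpt) this also yields the informal Theorem~\ref{thm:intro:maininformal}.

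The second step is to collapse the transformed objects appearing in the general statement. Since $\atoms^\T = \I{\d}$, we get $\atoms^\T\sset = \sset$, so the target-vector constraint $\targetlatpv \in \atoms^\T\sset$ becomes $\targetlatpv \in \sset$, the complexity term satisfies $\effdim{\atoms^\T\sset} = \effdim{\sset}$, and the sampling condition \eqref{eq:results:bounds:global:meas} turns into \eqref{eq:appl:nonlinear:global:meas}. By the identity \eqref{eq:results:bounds:transformklasso} the generalized Lasso \eqref{eq:results:bounds:klasso} coincides with \eqref{eq:appl:nonlinear:klassoiso}, and the transformed minimizer $\atoms^\T\solu\xpv$ is literally a minimizer $\solu\latpv$ of \eqref{eq:appl:nonlinear:klassoiso} under the notational convention adopted at the start of Section~\ref{sec:appl}. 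The mismatch parameters $\modelcovar{\targetlatpv}$ and $\modeldev{\targetlatpv}$ depend only on $(\lat, y)$ and $\targetlatpv$ (cf.\ Definition~\ref{def:results:parameters:mismatch}) and are therefore unchanged. Substituting all of this into \eqref{eq:results:bounds:global:bound} reproduces \eqref{eq:appl:nonlinear:global:bound}, with the probability $1 - 5\exp(-C \cdot \subgparam^{-4} \cdot \delta^2 \cdot n)$ carried over unchanged.

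The only step that requires any attention --- and the one I would flag as the ``main obstacle,'' although it is pure bookkeeping rather than mathematics --- is making the identification $\R^\p = \R^\d$ and the induced renaming of variables from $(\x, \x_i, \xpv, \solu\xpv)$ to $(\lat, \lat_i, \latpv, \solu\latpv)$ fully explicit, so that the reader is not confused by the fact that Theorem~\ref{thm:results:bounds:global} is stated in the ``data space'' $\R^\p$ whereas the corollary is phrased in the ``latent space'' $\R^\d$. Since this identification and renaming are already announced in the preamble of Section~\ref{sec:appl}, Corollary~\ref{cor:appl:nonlinear:global} follows directly.
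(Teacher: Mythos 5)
Your proposal is correct and matches the paper exactly: Corollary~\ref{cor:appl:nonlinear:global} is presented there as a direct restatement of Theorem~\ref{thm:results:bounds:global} with $\atoms = \I{\d}$, and your substitution checks ($\atoms^\T\sset = \sset$, $\effdim{\atoms^\T\sset} = \effdim{\sset}$, $\atoms^\T\solu\xpv = \solu\latpv$, unchanged mismatch parameters) are precisely the bookkeeping the paper leaves implicit.
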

Note that the subsequent analysis of output models does only concern step~\ref{rec:results:mismatch:tset} and step~\ref{rec:results:mismatch:modelcovar} of Recipe~\ref{rec:results:mismatch}. Indeed, according to the simplification of Remark~\ref{rmk:results:bounds}\ref{rmk:results:bounds:mismatchpractical}, it already suffices to analyze the mismatch covariance on the target set, whereas the hypothesis set can be left unspecified.
Some examples of $\sset$ are however discussed in Subsection~\ref{subsec:appl:nonlinear:sset} below.
Moreover, let us mention that in what follow, one could also invoke the mismatch principle in conjunction with the refined statements of Theorem~\ref{thm:results:bounds:conic} and Theorem~\ref{thm:proofs:results:local} (cf. Remark~\ref{rmk:results:bounds}\ref{rmk:results:bounds:mpgeneric}), but this would involve some additional technicalities distracting from the main ideas.

\subsubsection{Single-Index Models}
\label{subsec:appl:nonlinear:sim}

Let us begin with defining a single-index model, which was already informally introduced in \eqref{eq:intro:sim}:
\begin{assumption}[Single-index models]\label{model:appl:nonlinear:sim}
	Let Assumption~\ref{model:results:observations} be satisfied with $\atoms = \I{\d}$ and let
	\begin{equation}\label{eq:appl:nonlinear:sim}
		y = \fobs(\sp{\lat}{\trulatpv})
	\end{equation}
	where $\trulatpv \in \R^\d \setminus \{\vnull\}$ is an (unknown) \emph{index vector} and $\fobs \colon \R \to \R$ is a measurable scalar \emph{output function} which can be random (independently of $\lat$).\footnote{Note that we implicitly assume that $y$ is a sub-Gaussian variable.}
\end{assumption}
Since Assumption~\ref{model:appl:nonlinear:sim} imposes rather mild restrictions on the output function $\fobs$, \eqref{eq:appl:nonlinear:sim} covers many model situations of practical interest, e.g., noisy linear regression ($\fobs = \Id + e$ with noise $e$), $1$-bit compressed sensing ($\fobs = \sign$), linear classification, or phase retrieval ($\fobs = \abs{\cdot}$); see Example~\ref{ex:appl:nonlinear:sim} below and Subsection~\ref{subsec:appl:nonlinear:1bit} for more details on these applications.
It is worth mentioning that the name `single-index model' originates from the field of \emph{econometrics}, e.g., see \cite{ichimura1993sim,horowitz2009semi}; but the scope of single-index models is much wider by now, forming a very active research branch in machine learning and signal processing, with an increasing amount of works in the last years.
We refer to \cite[Sec.~6]{plan2014highdim} and \cite[Subsec.~1.2]{yang2017high} for a broader overview of the extensive literature on single-index models, also including some historical references; a more specific discussion of approaches closely related to ours can be found in Subsection~\ref{subsec:literature:cs}.

In the context of this article, the output function $\fobs$ plays the role of a (non-parametric) model uncertainty that is unknown to the estimator \eqref{eq:appl:nonlinear:klassoiso}.
If $\fobs$ is non-linear, we particularly cannot expect that the mean squared error in \eqref{eq:appl:nonlinear:klassoiso} is small, no matter what parameter vector $\latpv \in \sset$ is selected. Fortunately, this issue does not bother us to much because our primary goal is to learn the index vector $\trulatpv$, and not to predict the true response variable~$y$.
With this in mind, it appears quite natural to choose $\tset = \spann{\{\trulatpv\}}$ as target set. We emphasize that just using a singleton, e.g., $\tset = \{\trulatpv\}$, is not necessarily meaningful, since the magnitude of $\trulatpv$ might be ``absorbed'' by $\fobs$, like in the case of $\fobs = \sign$.
Hence, without any further assumptions on $\fobs$, the best we can expect is to recover a scalar multiple (or the direction) of $\trulatpv$.
Invoking the step~\ref{rec:results:mismatch:modelcovar} of Recipe~\ref{rec:results:mismatch} now leads to the following outcome:
\begin{proposition}\label{prop:appl:nonlinear:sim:mp}
	Let Assumption~\ref{model:appl:nonlinear:sim} be satisfied and set $\tset \coloneqq \spann\{\trulatpv\}$.
	Then $\targetlatpv \coloneqq \scalfac \trulatpv \in \tset$ with
	\begin{equation}
		\scalfac \coloneqq \tfrac{1}{\lnorm{\trulatpv}^2} \mean[\fobs(\sp{\lat}{\trulatpv})\sp{\lat}{\trulatpv}]
	\end{equation}
	minimizes the mismatch covariance on $\tset$ and we have
	\begin{equation}\label{eq:appl:nonlinear:sim:modelcovar}
		\modelcovar{\targetlatpv} = \lnorm[\big]{\mean[\fobs(\sp{\lat}{\trulatpv})\orthcompl{\proj}_{\trulatpv} \lat]} \ .
	\end{equation}
	In particular, if $\lat \distributed \Normdistr{\vnull}{\I{\d}}$, we have $\modelcovar{\targetlatpv} = 0$.
\end{proposition}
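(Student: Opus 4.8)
The plan is to reduce everything to an elementary Euclidean projection. Using only the isotropy of $\lat$ — this is the computation carried out in Remark~\ref{rmk:results:outofbox} — one has, for \emph{every} $\latpv \in \R^\d$,
\begin{equation}
	\modelcovar{\latpv} = \lnorm[\big]{\mean[(y - \sp{\lat}{\latpv})\lat]} = \lnorm[\big]{\mean[y\lat] - \mean[\lat\lat^\T]\latpv} = \lnorm[\big]{\mean[y\lat] - \latpv} \ .
\end{equation}
Hence minimizing $\modelcovar{\cdot}$ over the line $\tset = \spann\{\trulatpv\}$ is nothing but the problem of finding the point of $\spann\{\trulatpv\}$ that is closest, in Euclidean norm, to the fixed vector $\mean[y\lat]$; its unique solution is the orthogonal projection $\proj_{\trulatpv}\mean[y\lat]$. (All moments appearing here are finite since $y$ and $\lat$ are sub-Gaussian, so $y\lat$ and $\sp{\lat}{\trulatpv}\lat$ are integrable by Cauchy--Schwarz.)

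First I would compute this projection explicitly: $\proj_{\trulatpv}\mean[y\lat] = \lnorm{\trulatpv}^{-2}\,\sp{\mean[y\lat]}{\trulatpv}\,\trulatpv$, and since $\sp{\mean[y\lat]}{\trulatpv} = \mean[y\sp{\lat}{\trulatpv}] = \mean[\fobs(\sp{\lat}{\trulatpv})\sp{\lat}{\trulatpv}]$, this equals $\scalfac\trulatpv$ with $\scalfac$ exactly as in the statement; this is the claimed minimizer $\targetlatpv$. Next I would read off the minimal value: the residual of the projection is $\orthcompl{\proj}_{\trulatpv}\mean[y\lat] = \mean[y\,\orthcompl{\proj}_{\trulatpv}\lat] = \mean[\fobs(\sp{\lat}{\trulatpv})\,\orthcompl{\proj}_{\trulatpv}\lat]$ — pulling the deterministic matrix $\orthcompl{\proj}_{\trulatpv}$ inside the expectation — and taking norms yields \eqref{eq:appl:nonlinear:sim:modelcovar}.

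For the Gaussian claim I would first condition on the randomness of $\fobs$ (which by assumption is independent of $\lat$), so that $\fobs$ may be treated as a fixed measurable function. When $\lat \distributed \Normdistr{\vnull}{\I{\d}}$, the pair $\bigl(\sp{\lat}{\trulatpv}, \orthcompl{\proj}_{\trulatpv}\lat\bigr)$ is jointly Gaussian, and its cross-covariance vanishes, $\cov\bigl(\sp{\lat}{\trulatpv}, \orthcompl{\proj}_{\trulatpv}\lat\bigr) = \orthcompl{\proj}_{\trulatpv}\mean[\sp{\lat}{\trulatpv}\lat] = \orthcompl{\proj}_{\trulatpv}\trulatpv = \vnull$; hence $\sp{\lat}{\trulatpv}$ and $\orthcompl{\proj}_{\trulatpv}\lat$ are independent. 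Therefore $\mean[\fobs(\sp{\lat}{\trulatpv})\,\orthcompl{\proj}_{\trulatpv}\lat] = \mean[\fobs(\sp{\lat}{\trulatpv})]\cdot\mean[\orthcompl{\proj}_{\trulatpv}\lat] = \mean[\fobs(\sp{\lat}{\trulatpv})]\cdot\vnull = \vnull$, so $\modelcovar{\targetlatpv} = 0$ once the conditioning is undone.

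There is no genuine obstacle here; the only points requiring a little care are the integrability bookkeeping via sub-Gaussianity and the correct handling of the extra randomness of $\fobs$ in the Gaussian step — independence of $\sp{\lat}{\trulatpv}$ and $\orthcompl{\proj}_{\trulatpv}\lat$ is a property of the Gaussian vector $\lat$ alone and is unaffected by $\fobs$. It is worth stressing that for a merely isotropic sub-Gaussian $\lat$ these two quantities are only \emph{uncorrelated}, not independent, which is precisely why $\modelcovar{\targetlatpv}$ need not vanish outside the Gaussian case.
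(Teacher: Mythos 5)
Your proof is correct. It takes a mildly but genuinely different route from the paper's: the paper makes the parametric ansatz $\targetlatpv = \scalfac\trulatpv$, splits $\lat = \proj_{\trulatpv}\lat + \orthcompl{\proj}_{\trulatpv}\lat$, and expands $\modelcovar{\scalfac\trulatpv}^2$ into a Pythagorean sum of a $\scalfac$-dependent term and a $\scalfac$-independent term, which it then minimizes in $\scalfac$; you instead invoke the global identity $\modelcovar{\latpv} = \lnorm[\big]{\mean[y\lat] - \latpv}$ (the computation of Remark~\ref{rmk:results:outofbox}, valid for all $\latpv$ by isotropy) and reduce the whole problem to the metric projection of $\mean[y\lat]$ onto the line $\spann\{\trulatpv\}$. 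The two arguments rest on exactly the same use of isotropy, but yours buys a cleaner statement (uniqueness of the minimizer for free, and an immediate generalization to any closed convex target set, where the minimizer is simply the nearest point to $\mean[y\lat]$), whereas the paper's ansatz-and-simplify presentation is deliberately chosen as a template that carries over verbatim to the multiple-index case of Proposition~\ref{prop:appl:nonlinear:mim:mp}. Your handling of the Gaussian step is also slightly more careful than the paper's, since you explicitly condition on the independent randomness of $\fobs$ before invoking the independence of $\sp{\lat}{\trulatpv}$ and $\orthcompl{\proj}_{\trulatpv}\lat$.
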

\begin{proof}
	Let us make the ansatz $\targetlatpv = \scalfac\trulatpv \in \tset$ where $\scalfac \in \R$ is specified later on.
	Using the orthogonal decomposition
	\begin{equation}
		\lat = \proj_{\trulatpv} \lat + \orthcompl{\proj}_{\trulatpv} \lat = \sp{\lat}{\tfrac{\trulatpv}{\lnorm{\trulatpv}}} \tfrac{\trulatpv}{\lnorm{\trulatpv}} + \orthcompl{\proj}_{\trulatpv} \lat,
	\end{equation}
	the mismatch covariance of $\targetlatpv$ simplifies as follows:
	\begin{align}
		& \modelcovar{\targetlatpv}^2 = \lnorm[\big]{\mean[(y - \sp{\lat}{\scalfac\trulatpv}) \lat]}^2 = \lnorm[\big]{\mean[(\fobs(\sp{\lat}{\trulatpv}) - \sp{\lat}{\scalfac\trulatpv}) \big( \sp{\lat}{\tfrac{\trulatpv}{\lnorm{\trulatpv}}} \tfrac{\trulatpv}{\lnorm{\trulatpv}} + \orthcompl{\proj}_{\trulatpv} \lat \big)][\big]}^2 \\*
		={} & \lnorm[\big]{\tfrac{1}{\lnorm{\trulatpv}^2} \mean[\fobs(\sp{\lat}{\trulatpv}) \sp{\lat}{\trulatpv}] \trulatpv - \tfrac{\scalfac}{\lnorm{\trulatpv}^2} \underbrace{\mean[ \sp{\lat}{\trulatpv}^2]}_{= \lnorm{\trulatpv}^2} \trulatpv}^2 + \lnorm[\big]{\mean[(\fobs(\sp{\lat}{\trulatpv}) - \sp{\lat}{\scalfac\trulatpv}) \orthcompl{\proj}_{\trulatpv} \lat][\big]}^2 \\
		={} & \Big(\tfrac{1}{\lnorm{\trulatpv}^2} \mean[\fobs(\sp{\lat}{\trulatpv}) \sp{\lat}{\trulatpv}] - \scalfac \Big)^2 \cdot \lnorm{\trulatpv}^2 + \lnorm[\big]{\mean[\fobs(\sp{\lat}{\trulatpv}) \orthcompl{\proj}_{\trulatpv} \lat][\big]}^2 \ , \label{eq:appl:nonlinear:sim:mp:simplif}
	\end{align}
	where we have used that $\mean[\sp{\lat}{\scalfac\trulatpv} \orthcompl{\proj}_{\trulatpv} \lat] = \vnull$ due to the isotropy of $\lat$.
	Since the second summand in \eqref{eq:appl:nonlinear:sim:mp:simplif} does not depend on $\scalfac$, the minimum of $\scalfac \mapsto \modelcovar{\scalfac\trulatpv}$ is indeed attained at $\scalfac = \mean[\fobs(\sp{\lat}{\trulatpv})\sp{\lat}{\trulatpv}] / \lnorm{\trulatpv}^2$.
	
	Finally, if $\lat \distributed \Normdistr{\vnull}{\I{\d}}$, the random vector $\orthcompl{\proj}_{\trulatpv} \lat$ is independent (not just uncorrelated) from $\sp{\lat}{\trulatpv}$, so that $\mean[\fobs(\sp{\lat}{\trulatpv}) \orthcompl{\proj}_{\trulatpv} \lat] = \vnull$.
\end{proof}
The above proof may serve as a template for applying the mismatch principle: First, make a parametric ansatz according to the target set and simplify the mismatch covariance as much as possible. Then select the free parameters such that the simplified expression is minimized (or at least gets small).
We will see below that this strategy works out for many other examples as well.

The Gaussian case of Proposition~\ref{prop:appl:nonlinear:sim:mp} leads to a very desirable situation, as it states that single-index models can be \emph{consistently} learned via the generalized Lasso \eqref{eq:appl:nonlinear:klassoiso} if $\lat \distributed \Normdistr{\vnull}{\I{\d}}$.
This conclusion precisely corresponds to a recent finding of Plan and Vershynin from \cite{plan2015lasso}, which relies on a calculation similar to \eqref{eq:appl:nonlinear:sim:mp:simplif}.
But beyond that, Proposition~\ref{prop:appl:nonlinear:sim:mp} even extends the results of \cite{plan2015lasso} to sub-Gaussian input variables, where the situation is in fact more complicated: In general, the mismatch covariance does not necessarily vanish at $\targetlatpv = \scalfac\trulatpv$, implying that \eqref{eq:appl:nonlinear:klassoiso} does not constitute a consistent estimator of any target vector in $\tset = \spann\{\trulatpv\}$. 
Consequently, according to Corollary~\ref{cor:appl:nonlinear:global}, the actual estimation capacity of \eqref{eq:appl:nonlinear:klassoiso} is specified by the expression $\modelcovar{\targetlatpv} = \lnorm{\mean[\fobs(\sp{\lat}{\trulatpv})\orthcompl{\proj}_{\trulatpv} \lat]}$, which in a certain sense measures the compatibility between the non-linear output model and the isotropic data vector. We will return to this issue in the course of variable selection (see Remark~\ref{rmk:appl:nonlinear:mim:vs-sim}), showing that asymptotically unbiased estimates can be still achieved as long as one is only aiming at the support of $\trulatpv$.

Let us conclude our discussion with some interesting model setups for which the outcome of Proposition~\ref{prop:appl:nonlinear:sim:mp} is well interpretable:
\begin{example}
\begin{rmklist}
\item\label{ex:appl:nonlinear:sim:rotinv}
	\emph{Rotationally invariant distributions.}
	This class of probability distributions is a natural generalization of Gaussian random vectors, according to which $\lat$ takes the form $r \vec{u}$, where the radius $r$ is sub-Gaussian on $\intvop{0}{\infty}$ and $\vec{u}$ is uniformly distributed on $\S^{\d-1}$.
	In the context of single-index models, this scenario was recently studied by Goldstein et al.\ in \cite{goldstein2016structured}, constructing a consistent estimator that is equivalent to \eqref{eq:results:bounds:klasso}.
	A simple calculation confirms their result by means of Proposition~\ref{prop:appl:nonlinear:sim:mp}: For every $\latpv \in \R^\d$, it holds that
	\begin{align}
		\mean[\fobs(\sp{\lat}{\trulatpv}) \sp{\orthcompl{\proj}_{\trulatpv}\lat}{ \latpv}] &= \mean{} \big[\mean[ \fobs(\sp{\lat}{\trulatpv}) \sp{\lat}{\orthcompl{\proj}_{\trulatpv} \latpv} \suchthat \sp{\lat}{\trulatpv}]\big] \\*
		&= \mean{} \big[\fobs(\sp{\lat}{\trulatpv}) \cdot \underbrace{\mean[ \sp{\lat}{\orthcompl{\proj}_{\trulatpv} \latpv} \suchthat \sp{\lat}{\trulatpv}]}_{= \tfrac{1}{\lnorm{\trulatpv}^2} \sp{\orthcompl{\proj}_{\trulatpv} \latpv}{\trulatpv} \sp{\lat}{\trulatpv} = 0}\big] = 0,
	\end{align}
	where we have used the law of total expectation and \cite[Cor.~1]{goldstein2016structured}.
	Hence, we conclude that $\modelcovar{\targetlatpv} = \lnorm{\mean[\fobs(\sp{\lat}{\trulatpv})\orthcompl{\proj}_{\trulatpv} \lat]} = 0$.
\item\label{ex:appl:nonlinear:sim:regression}
	\emph{Linear regression.} The most simple class of single-index models are linear observations of the form $y = \fobs(\sp{\lat}{\trulatpv}) \coloneqq \sp{\lat}{\trulatpv} + e$ with $e$ being independent, centered sub-Gaussian noise.
	In this case, it is not hard to see that Proposition~\ref{prop:appl:nonlinear:sim:mp} yields $\scalfac = 1$ and $\modelcovar{\trulatpv} = 0$.
	Note that, compared to non-linear output functions, $\scalfac$ does not depend on $\lnorm{\trulatpv}$ here.
	Due to $\modeldev{\trulatpv} = \normsubg{e}$, the error bound \eqref{eq:appl:nonlinear:global:bound} of Corollary~\ref{cor:appl:nonlinear:global} is  determined by the strength of noise, which is a well-known fact from the literature. In particular, one can achieve exact recovery in the noiseless case $e = 0$.
\item\label{ex:appl:nonlinear:sim:worstcase}
	\emph{Worst case error bounds.} While the latter two examples lead to desirable situations, there exist model configurations where learning the index vector fails. For instance, if the entries of $\lat$ are i.i.d.\ Rademacher distributed, recovery of certain extremely sparse signals from noiseless $1$-bit measurements is impossible ($\fobs = \sign$), regardless of the considered estimator (cf. \cite[Rmk.~1.5]{plan2013onebit}). 
	As an example, let us consider the two index vectors $\trulatpv = (1, 0, 0, \dots, 0) \in \R^\d$ and $\trulatpv' = (1, \tfrac{1}{2}, 0 ,\dots, 0) \in \R^\d$. Then it is not hard to see that $y = \sign(\sp{\lat}{\trulatpv}) = \sign(\sp{\lat}{\trulatpv'})$, implying that $\trulatpv$ and $\trulatpv'$ are indistinguishable on the basis of the observation model, although we have $\lnorm{\trulatpv - \trulatpv'} = 1/2$.
	Proposition~\ref{prop:appl:nonlinear:sim:mp} shows even more: since $\modelcovar{\trulatpv} = 0$ and $\modelcovar{\trulatpv'} = 1/2$, we can conclude that \eqref{eq:appl:nonlinear:klassoiso} would form a consistent estimator of $\trulatpv$ but not of $\trulatpv'$.
	
	Interestingly, it has turned out in a subsequent work \cite{ai2014onebitsubgauss} that these types of adversarial examples can be excluded by worst-case error bounds.
	Although the setup of \cite{ai2014onebitsubgauss} focuses on a simpler linear estimator, we believe that the techniques applied there could be also used to derive an appropriate upper bound on $\modelcovar{\targetlatpv}$.
	The assertion of Proposition~\ref{prop:appl:nonlinear:sim:mp} would then allow us to specify those index vectors which can(not) be accurately recovered by \eqref{eq:appl:nonlinear:klassoiso}.
	An alternative approach for $1$-bit measurements is presented in the next subsection, demonstrating that these limitations can be removed by \emph{dithered quantization}.
\item\label{ex:appl:nonlinear:sim:evenfunc}
	\emph{Even output functions.}
	If $\fobs$ is an even function and $\lat \distributed \Normdistr{\vnull}{\I{\d}}$, one ends up with $\scalfac = 0$ in Proposition~\ref{prop:appl:nonlinear:sim:mp}.
	Thus, Corollary~\ref{cor:appl:nonlinear:global} states that \eqref{eq:appl:nonlinear:klassoiso} simply approximates the null vector, which is clearly not what we are aiming at. This negative result nevertheless underpins the versatility of the mismatch principle because it reflects that linear hypothesis functions are not suited for problems like phase retrieval ($\fobs = \abs{\cdot}$).
	Based on a lifting argument, recent works \cite{yang2017stein,yang2017nips,thrampoulidis2018lifting} show that learning single-index models is still feasible via convex optimization for even output functions (or more generally, second-order output functions); but their method comes along with the price of a suboptimal sampling rate in the case of sparse recovery. Moreover, a related approach in \cite{yang2017misspecified} indicates that similar issues do also arise when using a non-convex estimator instead. \qedhere
\end{rmklist}
\label{ex:appl:nonlinear:sim}
\end{example}

\subsubsection{\texorpdfstring{$1$}{1}-Bit Observation Models and Dithering}
\label{subsec:appl:nonlinear:1bit}

As pointed out in the previous subsection, the generalized Lasso \eqref{eq:appl:nonlinear:klassoiso} does not always yield a consistent estimator for the index vector of a single-index model. This subsection demonstrates that such an undesirable behavior can be avoided by a \emph{dithering} step, which allows us to control the bias term $\modelcovar{\targetlatpv}$ by the sample size $n$.
While dithering is actually a well-known technique in quantized signal processing, e.g., see \cite{dabeer2006dithering,gray1993dithering,gray1998quantization}, its benefit recently also emerged in the field of compressed sensing \cite{knudson2016dithering,baraniuk2017dithering,xu2018dithering,dirksen2018robust,dirksen2018ditherstruct,thrampoulidis2018dithering}.
In the following, we will focus on the extreme situation of \emph{$1$-bit observations}, where $y = \fobs(\sp{\lat}{\trulatpv})$ with $\fobs \colon \R \to \{-1,+1\}$, i.e., each output carries only one single bit of information about the linear observation process $\lat \mapsto \sp{\lat}{\trulatpv}$.
Technically, this is just a special case of a single-index model (see Assumption~\ref{model:appl:nonlinear:sim}), but these models do in fact form an independent research subject referred to as \emph{$1$-bit compressed sensing} \cite{boufounos2008onebit}.

The simplest example of a quantization function is $\fobs = \sign$, which already appeared in Example~\ref{ex:appl:nonlinear:sim}\ref{ex:appl:nonlinear:sim:worstcase} and leads to \emph{perfect $1$-bit measurements}.
Geometrically, the associated response variable $y = \sign(\sp{\lat}{\trulatpv})$ then simply specifies on which side of the hyperplane $\orthcompl{\{\trulatpv\}}$ the input vector $\lat$ lies.
A slightly more advanced scenario is the presence of \emph{random bit-flips} after quantization, i.e., $y = \eps \cdot \sign(\sp{\lat}{\trulatpv})$ where $\eps$ is an independent \mbox{$\pm1$-valued} random variable with $q \coloneqq \prob[\eps = 1] \in \intvcl{\tfrac{1}{2}}{1}$.
In the Gaussian case $\lat \distributed \Normdistr{\vnull}{\I{\d}}$, Proposition~\ref{prop:appl:nonlinear:sim:mp} states that $\modelcovar{\scalfac\trulatpv} = 0$ with
\begin{equation}
	\scalfac = \tfrac{1}{\lnorm{\trulatpv}^2} \cdot \mean[\eps \cdot \abs{\sp{\lat}{\trulatpv}}] = \tfrac{1}{\lnorm{\trulatpv}^2} \cdot \mean[\eps] \cdot \mean[\abs{\sp{\lat}{\trulatpv}}] = \tfrac{1}{\lnorm{\trulatpv}} \cdot (2q - 1) \cdot \sqrt{\tfrac{2}{\pi}} \ .
\end{equation}
Thus, \eqref{eq:appl:nonlinear:global:bound} in Corollary~\ref{cor:appl:nonlinear:global} provides an upper bound on
\begin{equation}\label{eq:appl:nonlinear:1bit:boundrndflip}
	\lnorm{\solu\latpv - \scalfac\trulatpv} = \lnorm[\Big]{\solu\latpv - (2q - 1) \cdot \sqrt{\tfrac{2}{\pi}} \cdot \tfrac{\trulatpv}{\lnorm{\trulatpv}}} \ .
\end{equation}
This reflects the fact that recovery becomes impossible if $q = 1/2$, and more importantly, we can only expect an estimate of the direction of $\trulatpv$ but not of its magnitude.

Similar to this example situation, most theoretical approaches in the $1$-bit compressed sensing literature focus on the Gaussian case, e.g., see \cite{plan2013onebit,plan2013robust,jacques2013onebit}, and also \cite{dirksen2017onebitcirc} for an interesting extension to subsampled Gaussian circulant matrices. 
This restriction is particularly due to the difficulties with non-Gaussian input vectors described in Example~\ref{ex:appl:nonlinear:sim}\ref{ex:appl:nonlinear:sim:worstcase}.
For that reason, it was a somewhat astonishing finding by Dirksen and Mendelson \cite{dirksen2018robust} that a slight modification of the (perfect) $1$-bit measurement process suffices to construct a consistent estimator of $\trulatpv$ even in the sub-Gaussian case.
The key technique behind their achievement is \emph{dithering} which, in its most basic form, corresponds to a random shift of the quantization threshold.
The following adaption of Assumption~\ref{model:appl:nonlinear:sim} makes this more precise:
\begin{assumption}[Dithered $1$-bit observations]\label{model:appl:nonlinear:1bit}
	Let Assumption~\ref{model:results:observations} be satisfied with $\atoms = \I{\d}$. For $\Delta > 0$, we assume that
	\begin{equation}\label{eq:appl:nonlinear:1bit}
	y = \Delta \cdot \sign(\sp{\lat}{\trulatpv} + \tau^\Delta)
	\end{equation}
	where $\trulatpv \in \R^\d \setminus \{\vnull\}$ and $\tau^\Delta$ is uniformly distributed over the interval $\intvcl{-\Delta}{\Delta}$, independently of the latent factors $\lat$.
\end{assumption}
Compared to bit-flips \emph{after} quantization as considered above, a crucial feature of the output rule \eqref{eq:appl:nonlinear:1bit} is that the random fluctuations of $\tau^\Delta$ affect the quantization process itself:
sampling from \eqref{eq:appl:nonlinear:1bit} generates a set of measurements
\begin{equation}
y_i = \Delta \cdot \sign(\sp{\lat_i}{\trulatpv} + \tau_i^\Delta), \quad i = 1, \dots, n,
\end{equation}
with randomly chosen quantization thresholds $\tau_1^\Delta, \dots, \tau_n^\Delta$. 
We emphasize that the \emph{dithering parameter} $\Delta$ is often known and controllable in practice, whereas the realizations of the thresholds are typically unknown; the scaling factor $\Delta$ in \eqref{eq:appl:nonlinear:1bit} is therefore just a matter of convenience that simplifies the statement of Proposition~\ref{prop:appl:nonlinear:1bit:dithering} below.
More general background information about dithering is contained in \cite{thrampoulidis2018dithering} and the references therein. Moreover, one may consult \cite{dirksen2018robust} for a nice geometrical interpretation of dithered \mbox{$1$-bit} quantization, related to random hyperplane tessellations.

At first sight, the idea of using random thresholds might appear somewhat odd, as they form an extra source of noise compared to perfect quantization with $\fobs = \sign$. Hence, it is remarkable that by carefully selecting $\Delta$ in Assumption~\ref{model:appl:nonlinear:1bit} (as a function of $n$), one can gain control of the size of the mismatch covariance.
This strategy is formalized by the following result, which is a consequence of \cite[Lem.~4.1]{dirksen2018robust}.
\begin{proposition}\label{prop:appl:nonlinear:1bit:dithering}
	Let Assumption~\ref{model:appl:nonlinear:1bit} be satisfied and assume that $\lambda \geq \lnorm{\trulatpv}$.
	There exists a numerical constant $C > 0$ such that if $\Delta \geq C \cdot \subgparam \cdot \lambda \cdot \sqrt{\log(2n)}$, we have
	\begin{align}
	\modelcovar{\trulatpv} \lesssim \frac{\Delta}{\sqrt{n}} \qquad \text{and} \qquad \modeldev{\trulatpv} \lesssim \Delta.
	\end{align}
\end{proposition}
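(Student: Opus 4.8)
The plan is to first pin down the conditional law of the output given the input, and then to estimate the two mismatch parameters by elementary sub-Gaussian tail bounds. Throughout I would abbreviate $v \coloneqq \sp{\lat}{\trulatpv}$, which is a centered sub-Gaussian variable with $\normsubg{v} \leq \subgparam \lnorm{\trulatpv} \leq \subgparam\lambda$, and I would repeatedly use that $\mean[\sp{\lat}{\vec{u}}^2] = 1$ for every $\vec{u} \in \S^{\d-1}$ by isotropy of $\lat$.

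The first step is to compute $\mean[y \suchthat \lat]$. Since $\tau^\Delta$ is uniform on $\intvcl{-\Delta}{\Delta}$ and independent of $\lat$, integrating out $\tau^\Delta$ shows that $\mean[y \suchthat \lat] = v$ on the event $\{\abs{v} \leq \Delta\}$ and $\mean[y \suchthat \lat] = \Delta\sign(v)$ on $\{\abs{v} > \Delta\}$; equivalently, $\abs{\mean[y - v \suchthat \lat]} = \pospart{\abs{v} - \Delta}$. This is precisely what \cite[Lem.~4.1]{dirksen2018robust} records: dithering over a window of half-width $\Delta$ makes the (conditional) one-bit output an \emph{unbiased} proxy for the linear functional $v$ as long as no ``overflow'' $\abs{v} > \Delta$ occurs, so that the entire residual bias lives on the overflow event.

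For the mismatch covariance I would write $\modelcovar{\trulatpv} = \sup_{\vec{u} \in \S^{\d-1}} \mean[(y - v)\sp{\lat}{\vec{u}}]$, use the tower rule together with the independence of $\tau^\Delta$ and $\lat$ to replace $y - v$ by $\mean[y - v \suchthat \lat]$, and then bound the supremum by $\mean[\pospart{\abs{v}-\Delta}^2]^{1/2}$ via Cauchy--Schwarz and the isotropy identity. Since $\pospart{\abs{v}-\Delta}^2 \leq v^2 \indprob{\abs{v} > \Delta}$, one more Cauchy--Schwarz step reduces this to $\mean[v^4]^{1/4} \cdot \prob[\abs{v} > \Delta]^{1/4}$, and the standard fourth-moment and tail estimates for the sub-Gaussian $v$ give, for $\Delta \gtrsim \subgparam\lambda$, a bound of order $\subgparam\lambda \cdot \exp(-c\,\Delta^2/(\subgparam\lambda)^2)$ with $c > 0$ numerical. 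Imposing $\Delta \geq C\subgparam\lambda\sqrt{\log(2n)}$ with $C$ large enough that $cC^2 \geq 1$ drives the exponential below $(2n)^{-1} \leq n^{-1/2}$, and since $\subgparam\lambda \leq \Delta$ in this regime we obtain $\modelcovar{\trulatpv} \lesssim \Delta/\sqrt{n}$. The mismatch deviation is then immediate: $\abs{y} = \Delta$ almost surely, hence $\normsubg{y} = \Delta$, and the triangle inequality for the sub-Gaussian norm yields $\modeldev{\trulatpv} = \normsubg{y - v} \leq \normsubg{y} + \normsubg{v} \leq \Delta + \subgparam\lambda \leq 2\Delta$.

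I expect the only non-routine point to be the conditional-mean computation of the first step --- the fact that uniform dithering over a window of half-width $\Delta$ annihilates the bias of the one-bit map off the overflow event --- which is exactly why the statement is phrased as a consequence of \cite[Lem.~4.1]{dirksen2018robust}. Everything after that is bookkeeping: one just has to keep the prefactor in the tail estimate at $O(\Delta)$ rather than accumulating spurious powers of $\subgparam$, and to note that the hypothesis on $\Delta$ is calibrated exactly so that the leftover bias drops to the $n^{-1/2}$ scale while the deviation stays of order $\Delta$.
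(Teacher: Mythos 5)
Your argument is correct and reaches the same conclusion, but it takes a more self-contained route than the paper for the covariance bound. The paper writes $\modelcovar{\trulatpv} = \sup_{\vdir \in \S^{\d-1}} \mean[(y - \sp{\lat}{\trulatpv})\sp{\lat}{\vdir}]$ and then imports the entire analytic content from \cite[Lem.~4.1]{dirksen2018robust}, which directly supplies the bound $\mean[\sp{\lat}{\vdir}^2]^{1/2} \cdot \max\{\Delta, \normsubg{\sp{\lat}{\trulatpv}}\} \cdot e^{-C'\Delta^2/\normsubg{\sp{\lat}{\trulatpv}}^2}$; the rest is the same plug-in of $\normsubg{\sp{\lat}{\trulatpv}} \leq \subgparam\lambda$ and the hypothesis on $\Delta$ that you perform. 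You instead re-derive the substance of that lemma: the explicit conditional-mean identity $\abs{\mean[y - \sp{\lat}{\trulatpv} \suchthat \lat]} = \pospart{\abs{v} - \Delta}$ (which checks out — integrating the uniform dither gives $\mean[y\suchthat\lat] = v$ on $\{\abs{v}\leq\Delta\}$ and $\Delta\sign(v)$ on the overflow event), followed by two applications of Cauchy--Schwarz and standard sub-Gaussian moment and tail estimates. What the paper's route buys is brevity and the slightly cleaner prefactor $\max\{\Delta,\normsubg{v}\}$ with the full exponential; what yours buys is transparency — it isolates the overflow event $\{\abs{v}>\Delta\}$ as the sole source of bias — at the harmless cost of a lossier exponent (the $1/4$-power of the tail probability), which you correctly absorb by enlarging the numerical constant $C$. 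Your treatment of $\modeldev{\trulatpv}$ via $\normsubg{y} = \Delta$ and the triangle inequality is identical to the paper's.
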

\begin{proof}
	Using that $\lnorm{\cdot} = \sup_{\vdir \in \S^{\d-1}} \sp{\cdot}{\vdir}$, the mismatch covariance at $\trulatpv$ takes the equivalent form
	\begin{equation}
	\modelcovar{\trulatpv} = \sup_{\vdir \in \S^{\d-1}} \mean[(\Delta \cdot \sign(\sp{\lat}{\trulatpv} + \tau^\Delta) - \sp{\lat}{\trulatpv}) \sp{\lat}{\vdir}].
	\end{equation}
	An application of \cite[Lem.~4.1]{dirksen2018robust} to this expression now yields the following upper bound:
	\begin{equation}
	\modelcovar{\trulatpv} \lesssim \mean[\sp{\lat}{\vdir}^2]^{1/2} \cdot \max\{\Delta, \normsubg{\sp{\lat}{\trulatpv}} \} \cdot e^{-C' \cdot \Delta^2 / \normsubg{\sp{\lat}{\trulatpv}}^2}, \quad \vdir \in \S^{\d-1},
	\end{equation}
	where $C' > 0$ is a numerical constant. Due to $\normsubg{\sp{\lat}{\trulatpv}} \leq \subgparam \cdot \lnorm{\trulatpv} \leq \subgparam \cdot \lambda$ and the assumption $\Delta \geq C \cdot \subgparam \cdot \lambda \cdot \sqrt{\log(2n)}$ with $C \coloneqq 1 / \sqrt{2C'}$, it follows that
	\begin{align}
	\modelcovar{\trulatpv} &\lesssim \max\{\Delta, \underbrace{\subgparam \cdot \lambda}_{\lesssim \Delta} \} \cdot e^{-C' \cdot \Delta^2 / (\subgparam \cdot \lambda)^2} \lesssim \Delta \cdot e^{-\log(2n) / 2} \lesssim \frac{\Delta}{\sqrt{n}} \ .
	\end{align}
	To bound the mismatch deviation, we observe that
	\begin{equation}
	\modeldev{\trulatpv} = \normsubg{\Delta \cdot \sign(\sp{\lat}{\trulatpv} + \tau^\Delta) - \sp{\lat}{\trulatpv}} \lesssim \Delta + \underbrace{\normsubg{\sp{\lat}{\trulatpv}}}_{\leq \subgparam \cdot \lambda \lesssim \Delta} \lesssim \Delta.
	\end{equation}
\end{proof}
The statistical intuition of Proposition~\ref{prop:appl:nonlinear:1bit:dithering} is the following: If $\Delta$ grows logarithmically with $n$, the bias term $\modelcovar{\trulatpv}$ gets reduced, with the price of a slightly increased deviation term $\modeldev{\trulatpv}$, controllable by $\Delta$. As the latter is associated with the variance of the Lasso estimator \eqref{eq:appl:nonlinear:klassoiso}, we can therefore conclude that the \emph{bias-variance trade-off} is well-behaved for dithered $1$-bit observations.

Let us now evaluate Corollary~\ref{cor:appl:nonlinear:global} with $\targetlatpv = \trulatpv$ and Assumption~\ref{model:appl:nonlinear:1bit}:
For the sake of simplicity, we assume that $1 \leq \subgparam \cdot \Delta = C' \cdot \subgparam^2 \cdot \lambda \cdot \sqrt{\log(2n)}$ for some $C' \geq C$. 
Combining the error bound \eqref{eq:appl:nonlinear:global:bound} with Proposition~\ref{prop:appl:nonlinear:1bit:dithering} then leads to
\begin{align}
\lnorm{\solu\latpv - \trulatpv} &\lesssim \Delta \cdot (\subgparam \cdot \delta + \tfrac{1}{\sqrt{n}}) \lesssim \subgparam \cdot \lambda \cdot \sqrt{\log(2n)} \cdot (\subgparam \cdot \delta + \tfrac{1}{\sqrt{n}}). \label{eq:appl:nonlinear:1bit:bounddithering}
\end{align}
Since the right-hand side of \eqref{eq:appl:nonlinear:1bit:bounddithering} tends to $0$ if $n \to \infty$ and $\delta$ is appropriately chosen (cf. \eqref{eq:results:bounds:global:bound-decay}), this error estimate indeed verifies our initial claim: By carefully choosing the dithering parameter $\Delta$, the generalized Lasso \eqref{eq:appl:nonlinear:klassoiso} yields a consistent estimator of $\trulatpv$. Moreover, there is a remarkable difference to \eqref{eq:appl:nonlinear:1bit:boundrndflip}, namely that \eqref{eq:appl:nonlinear:1bit:bounddithering} even allows us to retrieve the norm of $\trulatpv$, supposed that a (rough) upper bound $\lambda \geq \lnorm{\trulatpv}$ is available.

\begin{remark}[Relationship to the mismatch principle]
	The error bound of \eqref{eq:appl:nonlinear:1bit:bounddithering} essentially reproduces one of the main results of Thrampoulidis and Rawat in \cite{thrampoulidis2018dithering}; see Theorem~IV.1 therein.
	But let us point out once again that our general estimation framework enabled us to obtain this result in a very systematic way; most notably, the only technical ingredient of our approach is Proposition~\ref{prop:appl:nonlinear:1bit:dithering}.
	
	Although not mentioned explicitly, the above derivation corresponds to an application of the mismatch principle with $\tset = \{\trulatpv\}$ as target set, which turns \ref{rec:results:mismatch:modelcovar} of Recipe~\ref{rec:results:mismatch} into a trivial step.
	In fact, an argumentation such as in Proposition~\ref{prop:appl:nonlinear:sim:mp} is not useful in the context of Assumption~\ref{model:appl:nonlinear:1bit}, since one is interested in recovery of $\trulatpv$ and not just a scalar multiple of it.
\end{remark}

\subsubsection{Generalized Linear Models}
\label{subsec:appl:nonlinear:glm}

Generalized linear models form a natural extension of linear regression problems and are conceptually very similar to single-index models (cf. \cite{nelder1972glm,mccullagh1989generalized}).
Let us begin with a formal definition:\footnote{This is not exactly how generalized linear models are usually introduced in the literature. We adopt our definition from \cite[Subsec.~3.4]{plan2014highdim}, disregarding that $y$ also belongs to an exponential family.}
\begin{assumption}[Generalized linear models]\label{model:appl:nonlinear:glm}
	Let Assumption~\ref{model:results:observations} be satisfied with $\atoms = \I{\d}$ and let the output variable obey
	\begin{equation}\label{eq:appl:nonlinear:glm}
		\mean[y \suchthat \lat] = \fobs(\sp{\lat}{\trulatpv})
	\end{equation}
	where $\trulatpv \in \R^\d \setminus \{\vnull\}$ is an (unknown) \emph{parameter vector} and $\fobs \colon \R \to \R$ is a measurable scalar function (the inverse of $\fobs$ is typically referred to as the \emph{link function}).
\end{assumption}
In contrast to the single-index model in \eqref{eq:appl:nonlinear:sim}, the condition \eqref{eq:appl:nonlinear:glm} does not specify the output variable $y$ directly, but rather assumes a parametric structure for its conditional expectation given the input vector $\lat$.
In that way, one may incorporate different types of noise than with Assumption~\ref{model:appl:nonlinear:sim}. In particular, we wish to point out that --- despite their resemblance --- both models are not completely equivalent; see \cite[Sec.~6]{plan2014highdim} for a more detailed discussion.
However, the following result shows that the assertion of Proposition~\ref{prop:appl:nonlinear:sim:mp} remains literally true for generalized linear models, implying that Corollary~\ref{cor:appl:nonlinear:global} provides a similar recovery guarantee for the parameter vector $\trulatpv$:
\begin{proposition}\label{prop:appl:nonlinear:glm:mp}
	Let Assumption~\ref{model:appl:nonlinear:glm} be satisfied and set $\tset \coloneqq \spann\{\trulatpv\}$.
	Then $\targetlatpv \coloneqq \scalfac \trulatpv \in \tset$ with
	\begin{equation}
		\scalfac \coloneqq \tfrac{1}{\lnorm{\trulatpv}^2} \mean[\fobs(\sp{\lat}{\trulatpv})\sp{\lat}{\trulatpv}]
	\end{equation}
	minimizes the mismatch covariance on $\tset$ and we have
	\begin{equation}\label{eq:appl:nonlinear:glm:modelcovar}
		\modelcovar{\targetlatpv} = \lnorm[\big]{\mean[\fobs(\sp{\lat}{\trulatpv})\orthcompl{\proj}_{\trulatpv} \lat]} \ .
	\end{equation}
\end{proposition}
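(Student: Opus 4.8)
The plan is to reduce the claim to the single-index case already settled in Proposition~\ref{prop:appl:nonlinear:sim:mp}. The only way in which the generalized linear model \eqref{eq:appl:nonlinear:glm} differs from the single-index model of Assumption~\ref{model:appl:nonlinear:sim} is that $y$ is no longer a deterministic function of $\sp{\lat}{\trulatpv}$ but merely has the prescribed conditional mean. Crucially, however, the mismatch covariance depends on $y$ \emph{only} through its cross-moment with $\lat$: by Definition~\ref{def:results:parameters:mismatch},
\begin{equation}
	\modelcovar{\targetlatpv} = \lnorm[\big]{\mean[y\lat] - \mean[\sp{\lat}{\targetlatpv}\lat]} \ ,
\end{equation}
and by the law of total expectation (pulling the factor $\lat$ out of the inner conditional expectation) together with \eqref{eq:appl:nonlinear:glm},
\begin{equation}
	\mean[y\lat] = \mean{} \big[ \lat\,\mean[y \suchthat \lat] \big] = \mean[\fobs(\sp{\lat}{\trulatpv})\,\lat] \ .
\end{equation}
Hence $\mean[(y - \sp{\lat}{\targetlatpv})\lat] = \mean[(\fobs(\sp{\lat}{\trulatpv}) - \sp{\lat}{\targetlatpv})\lat]$, so the mismatch covariance of $\targetlatpv$ under \eqref{eq:appl:nonlinear:glm} is literally the same quantity that one obtains for the single-index model $y' \coloneqq \fobs(\sp{\lat}{\trulatpv})$ of Assumption~\ref{model:appl:nonlinear:sim}.

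From this point on, I would repeat the computation in the proof of Proposition~\ref{prop:appl:nonlinear:sim:mp} almost verbatim: make the ansatz $\targetlatpv = \scalfac\trulatpv \in \tset$ with a scalar $\scalfac \in \R$ to be fixed later; insert the orthogonal decomposition $\lat = \sp{\lat}{\trulatpv/\lnorm{\trulatpv}}\,\trulatpv/\lnorm{\trulatpv} + \orthcompl{\proj}_{\trulatpv}\lat$ into the re-expressed mismatch covariance; and use the isotropy of $\lat$ — in particular $\mean[\sp{\lat}{\trulatpv}\,\orthcompl{\proj}_{\trulatpv}\lat] = \vnull$ — to split $\modelcovar{\scalfac\trulatpv}^2$ into a component parallel to $\trulatpv$, equal to $\big(\lnorm{\trulatpv}^{-2}\mean[\fobs(\sp{\lat}{\trulatpv})\sp{\lat}{\trulatpv}] - \scalfac\big)^2\,\lnorm{\trulatpv}^2$, and a component $\lnorm[\big]{\mean[\fobs(\sp{\lat}{\trulatpv})\,\orthcompl{\proj}_{\trulatpv}\lat]}^2$ that does not involve $\scalfac$. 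The first summand is annihilated exactly at $\scalfac = \lnorm{\trulatpv}^{-2}\mean[\fobs(\sp{\lat}{\trulatpv})\sp{\lat}{\trulatpv}]$, which pins down the asserted minimizer, and what is left over is $\modelcovar{\targetlatpv}^2 = \lnorm[\big]{\mean[\fobs(\sp{\lat}{\trulatpv})\,\orthcompl{\proj}_{\trulatpv}\lat]}^2$, i.e.\ \eqref{eq:appl:nonlinear:glm:modelcovar} after taking square roots.

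I do not expect any genuine obstacle here; the only step that deserves a word of justification is the tower-property identity in the second display, which is legitimate because the moments involved are finite by the sub-Gaussianity of $y$ and $\lat$ postulated in Assumption~\ref{model:results:observations} (together with the integrability implicit in Assumption~\ref{model:appl:nonlinear:glm}). Let me also note, to head off a possible misreading, that — in contrast to the mismatch covariance — the mismatch deviation $\modeldev{\targetlatpv} = \normsubg{y - \sp{\lat}{\targetlatpv}}$ need \emph{not} agree with its single-index counterpart, since $y$ and $\fobs(\sp{\lat}{\trulatpv})$ can have different tails; but $\modeldev{\targetlatpv}$ plays no role in the statement of the proposition, so this discrepancy is immaterial.
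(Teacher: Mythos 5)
Your proposal is correct and follows essentially the same route as the paper: the law of total expectation is used to replace $\mean[y\lat]$ by $\mean[\fobs(\sp{\lat}{\trulatpv})\lat]$, which reduces the claim to the computation already carried out in the proof of Proposition~\ref{prop:appl:nonlinear:sim:mp}. Your closing caveat about the mismatch deviation $\modeldev{\targetlatpv}$ not transferring from the single-index case is accurate but, as you note, immaterial to the statement.
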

\begin{proof}
	Using the law of total expectation, we observe that
	\begin{align}
		\modelcovar{\targetlatpv} &= \lnorm[\big]{\mean[(y - \sp{\lat}{\scalfac\trulatpv}) \lat]} = \lnorm[\big]{\mean[\mean[(y - \sp{\lat}{\scalfac\trulatpv}) \lat \suchthat \lat]]} \\*
		&= \lnorm[\big]{\mean{} [\underbrace{\mean[y  \suchthat \lat]}_{= \fobs(\sp{\lat}{\trulatpv})} {} \cdot {} \lat]  - \mean[\sp{\lat}{\scalfac\trulatpv} \lat]} = \lnorm[\big]{\mean[(\fobs(\sp{\lat}{\trulatpv}) - \sp{\lat}{\scalfac\trulatpv}) \lat]} \ .
	\end{align}
	Now, we may proceed exactly as in the proof of Proposition~\ref{prop:appl:nonlinear:sim:mp}.
\end{proof}

\subsubsection{Multiple-Index Models and Variable Selection}
\label{subsec:appl:nonlinear:mim}

A natural extension of single-index models, which covers a much wider class of output rules, are so-called multiple-index models (e.g., see \cite{horowitz2009semi}).
They are formally defined as follows:
\begin{assumption}[Multiple-index models]\label{model:appl:nonlinear:mim}
	Let Assumption~\ref{model:results:observations} be satisfied with $\atoms = \I{\d}$ and let
	\begin{equation}\label{eq:appl:nonlinear:mim}
		y = \Fobs(\sp{\lat}{\latpv_1}, \dots, \sp{\lat}{\latpv_S})
	\end{equation}
	where $\latpv_1, \dots, \latpv_S \in \R^\d \setminus \{\vnull\}$ are (unknown) \emph{index vectors} and $\Fobs \colon \R^S \to \R$ is a measurable \emph{output function} which can be random (independently of $\lat$).
\end{assumption}

The case of $S = 1$ obviously coincides with the single-index model from Assumption~\ref{model:appl:nonlinear:sim}.
In general, the ultimate goal is to recover all index vectors $\latpv_1, \dots, \latpv_S$, or at least the spanned subspace $\spann{\{\latpv_1, \dots, \latpv_S\}}$.
The feasibility of this task, however, strongly depends on the output function $\Fobs$. If $\Fobs$ is linear for example, recovery of all index vectors is impossible, since \eqref{eq:appl:nonlinear:mim} would simply degenerate into an ordinary linear regression model. But even in the non-linear case one might have to deal with ambiguities, such as permutation invariance of the indices.

With this in mind, applying the generalized Lasso \eqref{eq:appl:nonlinear:klassoiso} to multiple-index observations might appear useless, as a linear predictor $\lat \mapsto \sp{\lat}{\latpv}$ certainly cannot capture the impact of all index components in \eqref{eq:appl:nonlinear:mim} at the same time.
Indeed, we cannot expect recovery of all index vectors, but it still makes sense to ask whether \eqref{eq:appl:nonlinear:klassoiso} does at least approximate \emph{any} vector in  $\spann{\{\latpv_1, \dots, \latpv_S\}}$.
Regarding the mismatch principle, this suggests selecting $\tset = \spann{\{\latpv_1, \dots, \latpv_S\}}$ as target set in step \ref{rec:results:mismatch:tset} of Recipe~\ref{rec:results:mismatch}, which leads to the following outcome of \ref{rec:results:mismatch:modelcovar}: 
\begin{proposition}\label{prop:appl:nonlinear:mim:mp}
	Let Assumption~\ref{model:appl:nonlinear:mim} be satisfied. Moreover, assume that the index vectors $\latpv_1, \dots, \latpv_S \in \R^\d$ form an orthonormal system and set $\tset \coloneqq \spann\{\latpv_1, \dots, \latpv_S\}$.
	Then $\targetlatpv \coloneqq \sum_{j = 1}^S \scalfac_j \latpv_j \in \tset$ with
	\begin{equation}
		\scalfac_j \coloneqq \mean[\Fobs(\sp{\lat}{\latpv_1},\dots,\sp{\lat}{\latpv_S})\sp{\lat}{\latpv_j}], \quad j = 1, \dots, S,
	\end{equation}
	minimizes the mismatch covariance on $\tset$ and we have
	\begin{equation}\label{eq:appl:nonlinear:mim:modelcovar}
		\modelcovar{\targetlatpv} = \lnorm[\big]{\mean[\Fobs(\sp{\lat}{\latpv_1},\dots,\sp{\lat}{\latpv_S})\orthcompl{\proj}_{\tset} \lat]} \ .
	\end{equation}
	In particular, if $\lat \distributed \Normdistr{\vnull}{\I{\d}}$, we have $\modelcovar{\targetlatpv} = 0$.
\end{proposition}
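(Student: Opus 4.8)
The plan is to follow the template already used in the proof of Proposition~\ref{prop:appl:nonlinear:sim:mp}: make a parametric ansatz for $\targetlatpv$ dictated by the target set, simplify $\modelcovar{\targetlatpv}$ by an orthogonal decomposition of $\lat$, and then read off the minimizing coefficients. Concretely, I would write $\targetlatpv = \sum_{j=1}^S \scalfac_j \latpv_j$ with $\scalfac_1, \dots, \scalfac_S \in \R$ to be fixed later. Since $\latpv_1, \dots, \latpv_S$ form an orthonormal system, the orthogonal projection onto $\tset = \spann\{\latpv_1, \dots, \latpv_S\}$ acts as $\proj_{\tset} \lat = \sum_{j=1}^S \sp{\lat}{\latpv_j} \latpv_j$, and hence $\lat = \sum_{j=1}^S \sp{\lat}{\latpv_j} \latpv_j + \orthcompl{\proj}_{\tset} \lat$.

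First I would split the defining expectation of the mismatch covariance along this decomposition:
\begin{equation}
	\mean[(y - \sp{\lat}{\targetlatpv}) \lat] = \sum_{j=1}^S \mean[(y - \sp{\lat}{\targetlatpv}) \sp{\lat}{\latpv_j}] \, \latpv_j + \mean[(y - \sp{\lat}{\targetlatpv}) \orthcompl{\proj}_{\tset} \lat].
\end{equation}
The first sum lies in $\tset$ and the last term lies in $\orthcompl{\tset}$, so by orthogonality $\modelcovar{\targetlatpv}^2$ equals the sum of the squared norms of these two pieces, and --- using orthonormality of the $\latpv_j$ once more --- the first piece contributes $\sum_{j=1}^S \mean[(y - \sp{\lat}{\targetlatpv}) \sp{\lat}{\latpv_j}]^2$. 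To evaluate each scalar, I would expand $\sp{\lat}{\targetlatpv} = \sum_{k=1}^S \scalfac_k \sp{\lat}{\latpv_k}$ and use the isotropy of $\lat$ together with orthonormality (so that $\mean[\sp{\lat}{\latpv_k} \sp{\lat}{\latpv_j}] = \sp{\latpv_k}{\latpv_j}$ equals $1$ for $k=j$ and $0$ otherwise), which gives $\mean[(y - \sp{\lat}{\targetlatpv}) \sp{\lat}{\latpv_j}] = \mean[\Fobs(\sp{\lat}{\latpv_1}, \dots, \sp{\lat}{\latpv_S}) \sp{\lat}{\latpv_j}] - \scalfac_j$.

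For the $\orthcompl{\tset}$-term I would note that $\mean[\sp{\lat}{\latpv_k} \orthcompl{\proj}_{\tset} \lat] = \orthcompl{\proj}_{\tset} \mean[\sp{\lat}{\latpv_k} \lat] = \orthcompl{\proj}_{\tset} \latpv_k = \vnull$ since $\latpv_k \in \tset$, so all $\scalfac$-dependent contributions drop out and this term reduces to $\mean[\Fobs(\sp{\lat}{\latpv_1}, \dots, \sp{\lat}{\latpv_S}) \orthcompl{\proj}_{\tset} \lat]$, independently of the choice of $\scalfac_1, \dots, \scalfac_S$. Combining the two pieces, $\modelcovar{\targetlatpv}^2 = \sum_{j=1}^S \big(\mean[\Fobs(\sp{\lat}{\latpv_1}, \dots, \sp{\lat}{\latpv_S}) \sp{\lat}{\latpv_j}] - \scalfac_j\big)^2 + \lnorm[\big]{\mean[\Fobs(\sp{\lat}{\latpv_1}, \dots, \sp{\lat}{\latpv_S}) \orthcompl{\proj}_{\tset} \lat]}^2$, which is minimized precisely by the stated choice $\scalfac_j = \mean[\Fobs(\sp{\lat}{\latpv_1}, \dots, \sp{\lat}{\latpv_S}) \sp{\lat}{\latpv_j}]$, and the minimal value is exactly \eqref{eq:appl:nonlinear:mim:modelcovar}. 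For the Gaussian case I would argue as in Proposition~\ref{prop:appl:nonlinear:sim:mp}: if $\lat \distributed \Normdistr{\vnull}{\I{\d}}$, the Gaussian vector $\orthcompl{\proj}_{\tset} \lat$ is uncorrelated with, hence independent of, $(\sp{\lat}{\latpv_1}, \dots, \sp{\lat}{\latpv_S})$, so that (also using independence of any internal randomness of $\Fobs$) $\mean[\Fobs(\sp{\lat}{\latpv_1}, \dots, \sp{\lat}{\latpv_S}) \orthcompl{\proj}_{\tset} \lat] = \mean[\Fobs(\sp{\lat}{\latpv_1}, \dots, \sp{\lat}{\latpv_S})] \cdot \mean[\orthcompl{\proj}_{\tset} \lat] = \vnull$.

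The individual computations are routine once this decomposition is in place, so the only genuine point of care --- and the reason the orthonormality hypothesis on $\latpv_1, \dots, \latpv_S$ is imposed --- is the bookkeeping of cross terms: without orthonormality, $\proj_{\tset} \lat$ is no longer $\sum_j \sp{\lat}{\latpv_j} \latpv_j$, the $\tset$-component no longer decouples into $S$ independent one-dimensional minimizations, and the clean closed form for $\scalfac_j$ is lost. I expect that bookkeeping --- together with checking that the $\orthcompl{\tset}$-contribution really is independent of the free parameters --- to be the main, though mild, obstacle.
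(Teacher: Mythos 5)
Your proposal is correct and follows essentially the same route as the paper's proof: the same parametric ansatz $\targetlatpv = \sum_j \scalfac_j \latpv_j$, the same orthogonal decomposition $\lat = \sum_j \sp{\lat}{\latpv_j}\latpv_j + \orthcompl{\proj}_{\tset}\lat$ via orthonormality and isotropy, the same Pythagorean split of $\modelcovar{\targetlatpv}^2$ into the $S$ decoupled scalar terms plus the $\scalfac$-independent $\orthcompl{\tset}$-term, and the same independence argument in the Gaussian case. No gaps.
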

\begin{proof}
	We proceed analogously to the proof of Proposition~\ref{prop:appl:nonlinear:sim:mp}: First, let us make the parametric ansatz $\targetlatpv = \sum_{j = 1}^S \scalfac_j \latpv_j \in \tset$ with unspecified scalars $\scalfac_1, \dots, \scalfac_S \in \R$, and consider the orthogonal decomposition
	\begin{equation}
		\lat = \proj_{\tset} \lat + \orthcompl{\proj}_{\tset} \lat = \sum_{j = 1}^S \sp{\lat}{\latpv_j} \latpv_j + \orthcompl{\proj}_{\tset} \lat,
	\end{equation}
	where we have used that $\latpv_1, \dots, \latpv_S$ form an orthonormal system.
	Using the isotropy of $\lat$ now implies
	\begin{align}
		\modelcovar{\targetlatpv}^2 &= \lnorm[\Big]{\mean[\Big(\Fobs(\sp{\lat}{\latpv_1}, \dots, \sp{\lat}{\latpv_S}) - \sum_{j' = 1}^S \scalfac_{j'} \sp{\lat}{\latpv_{j'}}\Big) \Big( \sum_{j = 1}^S \sp{\lat}{\latpv_j} \latpv_j + \orthcompl{\proj}_{\tset} \lat \Big)][\Big]}^2 \\*
		&= \sum_{j = 1}^S \lnorm[\Big]{\mean[\Big(\Fobs(\sp{\lat}{\latpv_1}, \dots, \sp{\lat}{\latpv_S}) - \sum_{j' = 1}^S \scalfac_{j'} \sp{\lat}{\latpv_{j'}}\Big) \sp{\lat}{\latpv_j} \latpv_j][\Big]}^2 \\*
			& \qquad + \lnorm[\Big]{\mean[\Fobs(\sp{\lat}{\latpv_1}, \dots, \sp{\lat}{\latpv_S}) \orthcompl{\proj}_{\tset} \lat ] - \underbrace{\mean[\Big(\sum_{j' = 1}^S \scalfac_{j'} \sp{\lat}{\latpv_{j'}}\Big) \orthcompl{\proj}_{\tset} \lat][\Big] }_{= \vnull}}^2 \\
		&= \sum_{j = 1}^S \Big(\mean[\Fobs(\sp{\lat}{\latpv_1}, \dots, \sp{\lat}{\latpv_S}) \sp{\lat}{\latpv_j} ] - \scalfac_j \underbrace{\mean[\sp{\lat}{\latpv_j}^2]}_{\smash{= \lnorm{\latpv_j}^2 = 1}} \Big)^2 \cdot \underbrace{\lnorm{\latpv_j}^2}_{\smash{= 1}} \\*
			& \qquad + \lnorm[\big]{\mean[\Fobs(\sp{\lat}{\latpv_1}, \dots, \sp{\lat}{\latpv_S}) \orthcompl{\proj}_{\tset} \lat ]}^2 \\
		&= \sum_{j = 1}^S \Big(\mean[\Fobs(\sp{\lat}{\latpv_1}, \dots, \sp{\lat}{\latpv_S}) \sp{\lat}{\latpv_j} ] - \scalfac_j \Big)^2 + \lnorm[\big]{\mean[\Fobs(\sp{\lat}{\latpv_1}, \dots, \sp{\lat}{\latpv_S}) \orthcompl{\proj}_{\tset} \lat ]}^2 \ .
	\end{align}
	Thus, the mismatch covariance is minimized on $\tset$ if $\scalfac_j = \mean[\Fobs(\sp{\lat}{\latpv_1},\dots,\sp{\lat}{\latpv_S})\sp{\lat}{\latpv_j}]$ for all $j = 1, \dots, S$. The claim in the Gaussian case again follows from the fact that $\sp{\lat}{\latpv_1}, \dots, \sp{\lat}{\latpv_S}$, and $\orthcompl{\proj}_{\tset} \lat$ are independent if $\lat \distributed \Normdistr{\vnull}{\I{\d}}$.
\end{proof}
We note that the orthogonality assumption on the index vectors is not overly restrictive because one may incorporate an orthogonalizing transformation by redefining $\Fobs$.
The result of Proposition~\ref{prop:appl:nonlinear:mim:mp} allows for a similar conclusion as in the situation of single-index models in Subsection~\ref{subsec:appl:nonlinear:sim}: the generalized Lasso \eqref{eq:appl:nonlinear:klassoiso} approximates a certain target vector $\targetlatpv = \sum_{j = 1}^S \scalfac_j \latpv_j$ in the span of the index vectors, supposed that the ``compatibility parameter'' $\modelcovar{\targetlatpv}$ is sufficiently small. 
In fact, we cannot expect a stronger statement at this point, since the scalar factors $\scalfac_1,\dots,\scalfac_S$ do strongly depend on the (still) unknown index vectors and the possibly unknown output function $\Fobs$.
Consequently, a recovery strategy for the entire subspace $\tset = \spann{\{\latpv_1, \dots, \latpv_S\}}$ asks for a more sophisticated method, e.g., a lifting approach as recently proposed by Yang et al.\ \cite{yang2017stein,yang2017nips}; for further reading on multiple-index models in high-dimensional estimation, see also \cite{chen2010mim,tan2018mim}.
Finally, we wish to point out a relationship between multiple-index models and (shallow) neural networks:
\begin{remark}[Shallow neural networks]
	If $\Fobs(v_1, \dots, v_S) \coloneqq \sum_{j = 1}^S \alpha_j \fobs(v_j)$ for some (non-linear) scalar function $\fobs \colon \R \to \R$ and coefficients $\alpha_1, \dots, \alpha_S \in \R$, the output rule of \eqref{eq:appl:nonlinear:mim} corresponds to a \emph{shallow neural network}:
	\begin{equation}
	y = \sum_{j = 1}^S \alpha_j \fobs(\sp{\lat}{\latpv_j}).
	\end{equation}
	Due to the rise of \emph{deep learning} \cite{lecun2015deep,goodfellow2016deep}, there is a tremendous interest in theses types of models in the past few years, but note that the learning setting of this article fits only to a limited extent into the theory of neural networks: The key objective in neural network research is to accurately predict the output variable $y$, which is clearly not achieved by the least-squares-based estimator \eqref{eq:appl:nonlinear:klassoiso}. Instead, we are rather asking for an estimate of the underlying parameter vectors, which is (if at all) only of secondary relevance for prediction problems. An interesting theoretical study related to this issue can be found in a recent work by Mondelli and Montanari \cite{mondelli2018connection}.
\end{remark}

Despite the above mentioned limitations, Proposition~\ref{prop:appl:nonlinear:mim:mp} still allows us to treat an important special case of multiple-index models, namely \emph{variable selection}:
If the index vectors are unit vectors, say $\latpv_1 = \vunit_{k_1}, \dots, \latpv_S = \vunit_{k_S}$ for a certain set of \emph{active variables} $\actsupp \coloneqq \{k_1, \dots, k_S\} \subset \{1, \dots, \d\}$, then \eqref{eq:appl:nonlinear:mim} simplifies as follows:
\begin{equation}\label{eq:appl:nonlinear:mim:vsmodel}
	y = \Fobs(\sp{\lat}{\vunit_{k_1}},\dots,\sp{\lat}{\vunit_{k_S}}) = \Fobs(s_{k_1}, \dots, s_{k_S}),
\end{equation}
which exactly corresponds to the variable selection model introduced in \eqref{eq:intro:vs}.
Adapting Proposition~\ref{prop:appl:nonlinear:mim:mp} to this specific situation reveals that the optimal target vector is indeed supported on $\actsupp$:
\begin{corollary}[Variable selection]\label{cor:appl:nonlinear:mim:vs}
	Let $\actsupp \coloneqq \{k_1, \dots, k_S\} \subset \{1, \dots, \d\}$ be a set of active variables, let Assumption~\ref{model:appl:nonlinear:mim} be satisfied with $\latpv_j = \vunit_{k_j}$ for $j = 1, \dots, S$ (see \eqref{eq:appl:nonlinear:mim:vsmodel}), and set
	\begin{equation}
		\tset \coloneqq \spann\{ \vunit_{k_1}, \dots, \vunit_{k_S}\} = \{ \latpv \in \R^\d \suchthat \supp(\latpv) \subset \actsupp \}.
	\end{equation}
	Then $\targetlatpv \coloneqq \sum_{j = 1}^S \scalfac_j \vunit_{k_j} \in \tset$ with
	\begin{equation}\label{eq:appl:nonlinear:mim:vs:scalars}
		\scalfac_j \coloneqq \mean[\Fobs(s_{k_1}, \dots, s_{k_S})s_{k_j}], \quad j = 1, \dots, S,
	\end{equation}
	minimizes the mismatch covariance on $\tset$ and we have
	\begin{equation}
		\modelcovar{\targetlatpv} = \lnorm[\big]{\mean[\Fobs(s_{k_1}, \dots, s_{k_S})\orthcompl{\proj}_{\tset} \lat]} \ .
	\end{equation}
	If the feature variables of $\lat = (s_1, \dots, s_\d)$ are independent, we particularly have $\modelcovar{\targetlatpv} = 0$.
\end{corollary}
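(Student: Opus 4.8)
The plan is to obtain this statement as a direct specialization of Proposition~\ref{prop:appl:nonlinear:mim:mp}. Setting $\latpv_j \coloneqq \vunit_{k_j}$ for $j = 1, \dots, S$, the vectors $\vunit_{k_1}, \dots, \vunit_{k_S}$ form an orthonormal system because $k_1, \dots, k_S$ are distinct, so the hypotheses of Proposition~\ref{prop:appl:nonlinear:mim:mp} are met (under Assumption~\ref{model:appl:nonlinear:mim}). Since $\sp{\lat}{\vunit_{k_j}} = s_{k_j}$, the output rule \eqref{eq:appl:nonlinear:mim} reduces to $y = \Fobs(s_{k_1}, \dots, s_{k_S})$, i.e.\ to \eqref{eq:appl:nonlinear:mim:vsmodel}. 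Proposition~\ref{prop:appl:nonlinear:mim:mp} then gives at once that $\targetlatpv = \sum_{j=1}^S \scalfac_j \vunit_{k_j}$ with $\scalfac_j = \mean[\Fobs(s_{k_1},\dots,s_{k_S}) s_{k_j}]$ minimizes $\modelcovar{\cdot}$ on $\tset$, and that $\modelcovar{\targetlatpv} = \lnorm[\big]{\mean[\Fobs(s_{k_1},\dots,s_{k_S}) \orthcompl{\proj}_{\tset}\lat]}$. The identity $\spann\{\vunit_{k_1},\dots,\vunit_{k_S}\} = \{\latpv \in \R^\d \suchthat \supp(\latpv) \subset \actsupp\}$ is immediate, and in particular $\supp(\targetlatpv) \subset \actsupp$.

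It remains to treat the case of independent feature variables. First I would make the projection explicit: $\tset$ is the coordinate subspace spanned by the $\vunit_k$ with $k \in \actsupp$, so $\orthcompl{\tset}$ is spanned by the $\vunit_k$ with $k \in \{1,\dots,\d\}\setminus\actsupp$; hence the $k$-th entry of $\orthcompl{\proj}_{\tset}\lat$ equals $s_k$ for $k \notin \actsupp$ and $0$ for $k \in \actsupp$. Thus the $k$-th entry of $\mean[\Fobs(s_{k_1},\dots,s_{k_S}) \orthcompl{\proj}_{\tset}\lat]$ is $\mean[\Fobs(s_{k_1},\dots,s_{k_S}) s_k]$ when $k \notin \actsupp$ and $0$ otherwise. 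Now fix $k \notin \actsupp$: by the independence of the coordinates $s_1,\dots,s_\d$ the variable $s_k$ is independent of the tuple $(s_{k_1},\dots,s_{k_S})$, and by Assumption~\ref{model:appl:nonlinear:mim} the (possibly random) output function $\Fobs$ is independent of $\lat$, hence of $s_k$; therefore $s_k$ is independent of $\Fobs(s_{k_1},\dots,s_{k_S})$. Since $\lat$ is centered, $\mean[\Fobs(s_{k_1},\dots,s_{k_S}) s_k] = \mean[\Fobs(s_{k_1},\dots,s_{k_S})]\cdot\mean[s_k] = 0$, the first factor being finite because $y$ is sub-Gaussian. As this holds for every $k \notin \actsupp$, we conclude $\modelcovar{\targetlatpv} = 0$.

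The proof is essentially bookkeeping on top of Proposition~\ref{prop:appl:nonlinear:mim:mp}; the only place that deserves a bit of care is the last independence step, where one must invoke both the mutual independence of the coordinates of $\lat$ \emph{and} the independence of $\Fobs$ from $\lat$. Note that this is genuinely weaker than the Gaussian assumption used in Proposition~\ref{prop:appl:nonlinear:mim:mp}, which there forces $\orthcompl{\proj}_{\tset}\lat$ to be independent of the \emph{full} vector $(\sp{\lat}{\latpv_1},\dots,\sp{\lat}{\latpv_S})$; for coordinate index vectors one only needs the off-support coordinates to decouple from the on-support ones, which is exactly what coordinate-wise independence delivers.
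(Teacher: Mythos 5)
Your proposal is correct and follows essentially the same route as the paper: Corollary~\ref{cor:appl:nonlinear:mim:vs} is obtained by specializing Proposition~\ref{prop:appl:nonlinear:mim:mp} to $\latpv_j = \vunit_{k_j}$, and the vanishing of $\modelcovar{\targetlatpv}$ under coordinate-wise independence follows because $\orthcompl{\proj}_{\tset}\lat$ involves only the non-active coordinates, which decouple from $(s_{k_1},\dots,s_{k_S})$ and from the (independent) randomness of $\Fobs$. Your explicit coordinate-by-coordinate treatment and the remark that this is weaker than the Gaussian condition needed in the general multiple-index case are both accurate, just more detailed than the paper's one-line justification.
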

\begin{proof}
	This is an immediate consequence of Proposition~\ref{prop:appl:nonlinear:mim:mp} and \eqref{eq:appl:nonlinear:mim:vsmodel}.
	The `in particular part' is due to the fact that $\orthcompl{\proj}_{\tset} \lat$ only depends on the non-active feature variables in $\{1, \dots, \d\} \setminus \actsupp$, which are independent from $s_{k_1}, \dots, s_{k_S}$.
\end{proof}
Combining this result with Corollary~\ref{cor:appl:nonlinear:global} shows that variable selection via the generalized Lasso \eqref{eq:appl:nonlinear:klassoiso} becomes feasible, as long as the coefficients $\scalfac_1, \dots, \scalfac_S$ of $\targetlatpv = \sum_{j = 1}^S \scalfac_j \vunit_{k_j}$ are not too small in magnitude and the sample size $n$ is sufficiently large;
more specifically, if $\solu\latpv$ is a minimizer of \eqref{eq:appl:nonlinear:klassoiso} with $\solu\latpv \approx \targetlatpv$, one may extract the set of active variables~$\actsupp$ from $\solu\latpv$ by only keeping its $S$ largest entries, so that the resulting vector is supported on $\supp(\targetlatpv) = \actsupp$.
A remarkable conclusion of Corollary~\ref{cor:appl:nonlinear:mim:vs} is that --- in stark contrast to single-index models --- we even obtain a \emph{consistent} estimator of $\targetlatpv$ in the sub-Gaussian case, supposed that the input variables are independent in Assumption~\ref{model:results:observations}. 
This observation has some interesting implications for sparse recovery tasks:
\begin{remark}[Sparse recovery]\label{rmk:appl:nonlinear:mim:vs-sim}
	The complexity of the variable selection problem does particularly rely on the number of active variables $S$.
	In practice, one often has to face high-dimensional scenarios where $S$ is significantly smaller than~$\d$, implying that the target vector $\targetlatpv$ in Corollary~\ref{cor:appl:nonlinear:mim:vs} is \emph{sparse}.
	As we will see later in Subsection~\ref{subsec:appl:nonlinear:sset}, such a sparsity prior can be easily exploited by means of an $\l{1}$-based hypothesis set, which enables accurate estimation results even when $n \ll \d$.
	
	Moreover, it is worth emphasizing that a single-index model \eqref{eq:appl:nonlinear:sim} with $\lnorm{\trulatpv}[0] \leq S$ is a special case of \eqref{eq:appl:nonlinear:mim:vsmodel}. A comparison of Proposition~\ref{prop:appl:nonlinear:sim:mp} and Corollary~\ref{cor:appl:nonlinear:mim:vs} shows that recovery of $\supp(\trulatpv)$ is possible under relatively mild assumptions, whereas recovery of~$\trulatpv$ itself may fail for non-Gaussian inputs.
	Hence, if only $\supp(\trulatpv)$ is available, one can proceed as follows: after discarding all non-active variables $\setcompl{\supp(\trulatpv)} = [\d] \setminus \supp(\trulatpv)$, one could eventually perform a more sophisticated method (operating in a much lower dimensional space) to estimate the weights of $\trulatpv$; such a strategy is well known as \emph{dimension reduction} and is successfully applied in many real-world learning tasks.
	With this in mind, Corollary~\ref{cor:appl:nonlinear:mim:vs} provides more evidence of why Lasso-type estimators can serve as a reliable \emph{variable (feature) selector}.
	Let us finally point out that the literature on the general problem of variable selection is extensive, going far beyond the specific setup considered above. For further reading, the interested reader is referred to the theoretical studies of \cite{Comminges_2012,Lafferty_2008,Bertin_2008,kubkowski2019selection} and the references therein.
\end{remark}

\subsubsection{Superimposed Observations}
\label{subsec:appl:nonlinear:msense}

A different way to generalize Assumption~\ref{model:appl:nonlinear:sim} are superpositions of single-index observations in the following sense:
\begin{assumption}[Superimposed single-index models]\label{model:appl:nonlinear:superimp}
	Let $\lat^1, \dots, \lat^M$ be independent, isotropic, centered sub-Gaussian random vectors in $\R^\d$ with $\normsubg{\lat^j} \leq \subgparam$ for some $\subgparam > 0$. The output variable is given by
	\begin{equation}\label{eq:appl:nonlinear:superimp}
		y \coloneqq \tfrac{1}{\sqrt{M}} \sum_{j = 1}^M \fobs_j(\sp{\lat^j}{\trulatpv})
	\end{equation}
	where $\trulatpv \in \R^\d \setminus \{\vnull\}$ is an unknown \emph{index vector} and $\fobs_j \colon \R \to \R$ are a measurable scalar \emph{output functions}.
	Moreover, we define the input vector as $\lat \coloneqq \tfrac{1}{\sqrt{M}} \sum_{j = 1}^M \lat^j$.
\end{assumption}
Using the isotropy of $\lat^1, \dots, \lat^M$ and Hoeffding's inequality \cite[Lem.~5.9]{vershynin2012random}, it is not hard to see that the random pair $(\lat, y)$ indeed fulfills Assumption~\ref{model:results:observations} with $\atoms = \I{\d}$ (cf. \cite[Prop.~6.5(1)]{genzel2017msense}).
The observation scheme of Assumption~\ref{model:appl:nonlinear:superimp} was extensively studied by the first author in \cite{GJ17:sampta,genzel2017msense}, motivated by an application in distributed wireless sensing.
The basic idea behind this model is that $M$ autonomous \emph{nodes} transmit their linear measurements $\sp{\lat^j}{\trulatpv}$ simultaneously to a central receiver. Due to non-linear distortions during this transmission procedure, which are modeled by the output functions $\fobs_j$, the receiver eventually measures a \emph{superposition} of all distorted signals, as described in \eqref{eq:appl:nonlinear:superimp}.

Since \cite{genzel2017msense} actually relies on similar proof techniques as this work and considers a more general setup than Assumption~\ref{model:appl:nonlinear:superimp}, we omit a detailed discussion at this point. Nevertheless, for the sake of completeness, let us provide an adaption of Proposition~\ref{prop:appl:nonlinear:sim:mp} that verifies the  feasibility of the associated recovery task in the Gaussian case:
\begin{proposition}\label{prop:appl:nonlinear:superimp:mp}
	Let Assumption~\ref{model:appl:nonlinear:superimp} be satisfied with $\lat^j \distributed \Normdistr{\vnull}{\I{\d}}$ for $j = 1, \dots, M$ and $\lnorm{\trulatpv} = 1$. Set $\tset \coloneqq \spann\{\trulatpv\}$ as target set. 
	Then $\targetlatpv \coloneqq \bar\scalfac \trulatpv \in \tset$ with
	\begin{equation}
		\bar\scalfac \coloneqq \tfrac{1}{M} \sum_{j = 1}^M \mean[\fobs_j(\sp{\lat^j}{\trulatpv}) \sp{\lat^j}{\trulatpv}]
	\end{equation}
	minimizes the mismatch covariance on $\tset$ and we have $\modelcovar{\targetlatpv} = 0$.
\end{proposition}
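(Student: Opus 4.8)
The plan is to follow the template of the proof of Proposition~\ref{prop:appl:nonlinear:sim:mp}, but it is cleanest to first invoke the reformulation of the mismatch covariance from Remark~\ref{rmk:results:outofbox}. Since $\lat = \tfrac{1}{\sqrt{M}}\sum_{j=1}^M \lat^j$ is a normalized sum of independent standard Gaussian vectors, it is itself distributed as $\Normdistr{\vnull}{\I{\d}}$, hence centered and isotropic; thus Remark~\ref{rmk:results:outofbox} gives $\modelcovar{\latpv} = \lnorm{\mean[y\lat] - \latpv}$ for every $\latpv \in \R^\d$. Consequently, minimizing the mismatch covariance over $\tset = \spann\{\trulatpv\}$ amounts to computing the Euclidean distance from $\mean[y\lat]$ to the line $\spann\{\trulatpv\}$, and the minimum is attained exactly at the orthogonal projection of $\mean[y\lat]$ onto this line. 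So the entire statement reduces to the claim $\mean[y\lat] = \bar\scalfac\trulatpv$: granting this, $\mean[y\lat]$ already lies in $\tset$, its projection onto $\tset$ equals $\bar\scalfac\trulatpv$, and the minimal value of $\modelcovar{\cdot}$ on $\tset$ is $0$, attained at $\targetlatpv = \bar\scalfac\trulatpv$.

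To evaluate $\mean[y\lat]$ I would expand the product of the two sums,
\begin{equation*}
	\mean[y\lat] = \tfrac{1}{M}\sum_{j=1}^M\sum_{k=1}^M \mean[\fobs_j(\sp{\lat^j}{\trulatpv})\lat^k].
\end{equation*}
For $k \neq j$, the vectors $\lat^j$ and $\lat^k$ are independent and $\lat^k$ is centered, so the corresponding summand vanishes. For $k = j$, I would use the orthogonal decomposition $\lat^j = \sp{\lat^j}{\trulatpv}\trulatpv + \orthcompl{\proj}_{\trulatpv}\lat^j$ (recall $\lnorm{\trulatpv} = 1$); since $\lat^j$ is Gaussian, $\sp{\lat^j}{\trulatpv}$ and $\orthcompl{\proj}_{\trulatpv}\lat^j$ are independent and the latter is centered, so $\mean[\fobs_j(\sp{\lat^j}{\trulatpv})\orthcompl{\proj}_{\trulatpv}\lat^j] = \vnull$. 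This leaves $\mean[\fobs_j(\sp{\lat^j}{\trulatpv})\lat^j] = \mean[\fobs_j(\sp{\lat^j}{\trulatpv})\sp{\lat^j}{\trulatpv}]\,\trulatpv$, and summing over $j$ yields $\mean[y\lat] = \bar\scalfac\trulatpv$, as needed.

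If one prefers not to rely on Remark~\ref{rmk:results:outofbox}, the same computation goes through directly: make the parametric ansatz $\targetlatpv = \scalfac\trulatpv$, decompose $\lat$ along $\trulatpv$, note $\mean[\sp{\lat}{\trulatpv}\lat] = \trulatpv$ by isotropy, and combine with $\mean[y\lat] = \bar\scalfac\trulatpv$ to get $\modelcovar{\scalfac\trulatpv} = \lnorm{(\bar\scalfac - \scalfac)\trulatpv} = \abs{\bar\scalfac - \scalfac}$, minimized at $\scalfac = \bar\scalfac$ with value $0$. I do not anticipate a genuine obstacle here — the argument is routine; the only point demanding a little care is the bookkeeping of the $M^2$ cross terms and the two distinct independence facts used (independence across the nodes~$j$, and the Gaussian-specific independence of a linear form from its orthogonal complement), the second of which is exactly where the Gaussian hypothesis $\lat^j \distributed \Normdistr{\vnull}{\I{\d}}$ is essential.
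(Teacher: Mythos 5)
Your proof is correct and, modulo presentation, is the argument the paper intends: the paper's own proof is a one-line deferral to the template of Proposition~\ref{prop:appl:nonlinear:sim:mp}, and your computation is exactly that template applied to the superposition, with the only genuinely new ingredient --- the vanishing of the $M^2-M$ cross terms $\mean[\fobs_j(\sp{\lat^j}{\trulatpv})\lat^k]$ for $k\neq j$ by independence and centeredness --- handled correctly. Routing the minimization through the identity $\modelcovar{\latpv}=\lnorm{\mean[y\lat]-\latpv}$ from Remark~\ref{rmk:results:outofbox} (legitimate here since $\lat\distributed\Normdistr{\vnull}{\I{\d}}$ is isotropic) is a mild streamlining of the same computation rather than a different method, and you correctly identify the two distinct independence facts on which everything rests.
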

\begin{proof}
	This works analogously to the proof of Proposition~\ref{prop:appl:nonlinear:sim:mp}. See \cite[Prop.~6.5(1)]{genzel2017msense} for more details.
\end{proof}

\subsubsection{High-Dimensional Problems and \texorpdfstring{$\l{1}$}{L1}-Constraints}
\label{subsec:appl:nonlinear:sset}

So far we did only marginally discuss the impact of the hypothesis set $\sset$ in the above examples, in particular, the issues raised by \ref{quest:intro:complexity} at the end of Subsection~\ref{subsec:intro:statlearn}.
In fact, the presence of a constraint set leads to fundamentally different estimation results than one would obtain from ordinary (unconstrained) least squares minimization.
In the context of Theorem~\ref{thm:results:bounds:conic} for example, one just ends up with $\effdim[\conic]{\atoms^\T\sset - \targetlatpv} \asymp \d$ in \eqref{eq:results:bounds:conic:meas} if $\sset = \R^\p$ and $\atoms$ is injective.
This reflects the well-known fact that ordinary least squares are often impractical in high-dimensional scenarios with $n \ll \d$, underpinning a key challenge of many estimation tasks: identify a hypothesis set that incorporates prior knowledge about the model of interest.

In the following, we demonstrate how an $\l{1}$-constraint may significantly improve the sampling rate when the target vector $\targetlatpv$ is known to be \emph{sparse}.
Regarding the above examples of single-index models (cf. Subsection~\ref{subsec:appl:nonlinear:sim}) and variable selection (cf. Subsection~\ref{subsec:appl:nonlinear:mim}), this assumption would be simply fulfilled if $\lnorm{\trulatpv}[0] \ll \d$ in \eqref{eq:appl:nonlinear:sim} or $S \ll \d$ in \eqref{eq:appl:nonlinear:mim:vsmodel}, respectively.
More generally, let us assume that $\targetlatpv$ is $S$-sparse for a certain $S > 0$, i.e., $\lnorm{\targetlatpv}[0] \leq S$.
The Cauchy-Schwarz inequality then implies an upper bound for the $\l{1}$-norm:
\begin{equation}
	\lnorm{\targetlatpv}[1] \leq \sqrt{\lnorm{\targetlatpv}[0]} \cdot \lnorm{\targetlatpv} \leq \sqrt{S} \cdot \lnorm{\targetlatpv} \ ,
\end{equation}
which in turn suggests to select $\sset = \sqrt{S} \cdot \lnorm{\targetlatpv} \cdot \ball[1][\d]$ as hypothesis set.
In order to apply Corollary~\ref{cor:appl:nonlinear:global}, we may use that $\meanwidth{\ball[1][\d]} \lesssim \sqrt{\log(2\d)}$ (cf. \cite[Ex.~3.8]{vershynin2014estimation}), so that \eqref{eq:appl:nonlinear:global:meas} yields the following condition:
\begin{equation}\label{eq:appl:nonlinear:sset:measglobal}
	n \gtrsim \subgparam^4 \cdot \delta^{-4} \cdot \lnorm{\targetlatpv}^2 \cdot S \cdot \log(2\d).
\end{equation}
A similar argument also works for the conic mean width in Theorem~\ref{thm:results:bounds:conic} (with $\atoms = \I{\d}$):
Choosing $\sset = \lnorm{\targetlatpv}[1] \cdot \ball[1][d]$ as hypothesis set, it holds that $\meanwidth[\conic]{\sset - \targetlatpv} \lesssim \sqrt{S \cdot \log(2\d / S)}$ (cf. \cite[Prop.~3.10]{chandrasekaran2012geometry}).
Consequently, the condition of \eqref{eq:results:bounds:conic:meas} is already satisfied if
\begin{equation}\label{eq:appl:nonlinear:sset:measconic}
	n \gtrsim \subgparam^4 \cdot \delta^{-2} \cdot S \cdot \log(\tfrac{2\d}{S}).
\end{equation}

The above bounds are consistent with the traditional theory of compressed sensing and sparse signal estimation, according to which the number of needed observations essentially scales linearly with the degree of sparsity $S$, and the ambient dimension $\d$ only appears in log-terms (cf. \cite{candes2006cs,candes2006stable,donoho2006cs}).
On the other hand, \eqref{eq:appl:nonlinear:sset:measglobal} and \eqref{eq:appl:nonlinear:sset:measconic} are both only of limited practical relevance, since the involved hypothesis sets require prior knowledge of the sparsity or $\l{1}$-norm of $\targetlatpv$, respectively.
Fortunately, using the concept of local mean width (see Definition~\ref{def:proofs:results:localmw} and Theorem~\ref{thm:proofs:results:local}), it is possible to derive stable recovery results that allow for different types of model uncertainties.
A more detailed discussion of this advanced approach is contained in \cite[Sec.~2.4]{genzel2017cosparsity} and \cite[Sec.~3.1]{genzel2018hinge}.
For comprehensive overviews of compressed sensing, high-dimensional estimation problems, and the benefit of sparsity, we refer to \cite{buhlmann2011statistics,davenport2012cs,foucart2013cs,hastie2015sparsity,vershynin2018hdp}.

Finally, let us point that there are many extensions of sparsity that can be incorporated by an appropriate choice of hypothesis set, e.g., \emph{group sparsity} \cite{huang2010group}, \emph{sparsifying transformations} \cite{Elad2006}, or \emph{weighted sparsity} \cite{khajehnejad2009weighted}.
Analyzing the mean width and the resulting error bounds in each of these situations could then provide a means to adapt certain model parameters; see \cite{oymak2012recovery} for an example in weighted $\l{1}$-minimization.

\subsection{Correlated Input Variables}
\label{subsec:appl:correlated}

In this subsection, we investigate several instances of Assumption~\ref{model:results:observations} where the mixing matrix $\atoms \in \R^{\p \times \d}$ is not the identity.
The first part (Subsection~\ref{subsec:appl:correlated:undercomplete}) deals with the case of $\p \geq \d$, meaning that there are more observed than latent variables. 
In this context, it will turn out that the statement of Theorem~\ref{thm:results:bounds:global} can be significantly strengthened if $\atoms$ is approximately known and the hypothesis set is carefully adapted.
Subsection~\ref{subsec:appl:correlated:overcomplete} is then devoted to the case of $\p < \d$, which is especially useful to model noisy data. Interestingly, the mismatch principle reveals in this situation that the mismatch covariance basically corresponds to the signal-to-noise ratio of the underlying observation process.
Finally, let us emphasize that, compared to the previous subsection, we now take a rather abstract viewpoint and do not further specify to the output variable $y$.

\subsubsection{The Underdetermined Case \texorpdfstring{($\p \geq \d$)}{(p \textgreater= d)}}
\label{subsec:appl:correlated:undercomplete}

We first note that the term `underdetermined' refers to the condition $\targetlatpv \in \atoms^\T \sset$ in Theorem~\ref{thm:results:bounds:global}.
Indeed, if $\p \geq \d$, the linear system $\targetlatpv = \atoms^\T \target\xpv$ typically has many solutions in $\sset$, implying that the associated parameter vector $\target\xpv \in \sset$ is not uniquely defined.
This ambiguity is in fact a particular reason why all theoretical guarantees from Section~\ref{sec:results} do only concern the estimation of a target vector $\targetlatpv = \atoms^\T \target\xpv$ but not $\target\xpv$.
The purpose of this subsection is therefore to show that, based on approximate knowledge of $\atoms$, one may adapt the hypothesis set $\sset$ in a such way that $\target\xpv$ becomes well-defined and the mean width in \eqref{eq:results:bounds:global:meas} does not depend on $\atoms$ anymore.

To simplify the following exposition, let us assume that the mixing matrix $\atoms \in \R^{\p \times \d}$ has full rank. Hence, $\atoms$ is injective and its pseudo-inverse $\psinv{\atoms} \in \R^{\d \times \p}$ satisfies $\psinv{\atoms}\atoms = \I{\d}$.
In principle, as long as $\atoms$ is known, the issue of correlated feature variables can be resolved by first computing
\begin{equation}\label{eq:appl:correlated:undercomplete:latentextraction}
	\psinv{\atoms} \x_i = \psinv{\atoms} \atoms \lat_i = \lat_i, \quad i = 1, \dots, n,
\end{equation}
and then proceeding with isotropic data as in Subsection~\ref{subsec:appl:nonlinear}.
Unfortunately, such a pre-processing step is often unstable in practice and $\atoms$ is usually not exactly known.
The following corollary of Theorem~\ref{thm:results:bounds:global} takes a different approach that involves a transformation of the hypothesis set in \eqref{eq:results:bounds:klasso} but not of the actual input data. 
Moreover, it just assumes that an estimated mixing matrix $\tilde{\atoms} \in \R^{\p\times \d}$ is available.
\begin{corollary}[Adaptive estimation via \refklasso{\sset}]\label{cor:appl:correlated:undercomplete:adapted}
	Let Assumption~\ref{model:results:observations} be satisfied with $\p \geq \d$ and assume that $\atoms$ has full rank.
	Moreover, assume that $\tilde{\atoms} \in \R^{\p \times \d}$ is a full rank matrix such that $\atrafo \coloneqq \psinv{\tilde{\atoms}} \atoms \in \R^{\d \times \d}$ is invertible.
	Let $\tilde\sset \subset \R^\d$ be a bounded, convex subset and fix a vector $\targetlatpv \in \atrafo^\T \tilde\sset$.
	Then there exists a numerical constant $C > 0$ such that for every $\delta \in \intvopcl{0}{1}$, the following holds true with probability at least $1 - 5\exp(- C \cdot \subgparam^{-4} \cdot \delta^2 \cdot n)$:
	If the number of observed samples obeys
	\begin{equation}\label{eq:appl:correlated:undercomplete:adapted:meas}
		n \gtrsim \subgparam^4 \cdot \delta^{-4} \cdot \opnorm{\atrafo}^2 \cdot \effdim{\tilde\sset},
	\end{equation}
	then every minimizer $\solu\xpv$ of \refklasso{\sset} with $\sset \coloneqq (\psinv{\tilde{\atoms}})^\T\tilde\sset$ satisfies
	\begin{equation}\label{eq:appl:correlated:undercomplete:adapted:bound}
		\lnorm{\tilde{\atoms}^\T\solu\xpv - \targetlatpv} \lesssim \opnorm{\atrafo^{-1}} \cdot \big( \max\{1, \subgparam \cdot \modeldev{\targetlatpv}\} \cdot \delta + \modelcovar{\targetlatpv} \big) + \lnorm{(\I{\d} - \atrafo^{-\T})\targetlatpv}
	\end{equation}
	and
	\begin{equation}\label{eq:appl:correlated:undercomplete:adapted:boundxpv}
		\lnorm{\solu\xpv - \target\xpv} \lesssim \opnorm{\atrafo^{-1}\psinv{\tilde{\atoms}}} \cdot \big( \max\{1, \subgparam \cdot \modeldev{\targetlatpv}\} \cdot \delta + \modelcovar{\targetlatpv} \big)
	\end{equation}
	where $\target\xpv \coloneqq (\psinv{\tilde{\atoms}})^\T \atrafo^{-\T} \targetlatpv \in \sset$.
\end{corollary}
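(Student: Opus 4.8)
The plan is to deduce the corollary directly from Theorem~\ref{thm:results:bounds:global}, applied to the rescaled program $\refklasso{\sset}$ with $\sset \coloneqq (\psinv{\tilde{\atoms}})^\T\tilde\sset$, and then to convert its conclusion about $\lnorm{\atoms^\T\solu\xpv - \targetlatpv}$ into the two asserted bounds by elementary linear algebra. First I would record the identity that drives everything: since $\atoms^\T(\psinv{\tilde{\atoms}})^\T = (\psinv{\tilde{\atoms}}\atoms)^\T = \atrafo^\T$, the transformed hypothesis set is $\atoms^\T\sset = \atrafo^\T\tilde\sset$. In particular $\sset$ is bounded and convex (a linear image of such a set), and the hypothesis $\targetlatpv\in\atrafo^\T\tilde\sset$ is exactly the membership $\targetlatpv\in\atoms^\T\sset$ required by Theorem~\ref{thm:results:bounds:global}. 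It then remains to compare the complexity terms: substituting $\h\mapsto\atrafo^\T\h$ gives $\meanwidth{\atrafo^\T\tilde\sset} = \mean[\sup_{\h\in\tilde\sset}\sp{\atrafo\gaussian}{\h}]$ for $\gaussian\distributed\Normdistr{\vnull}{\I{\d}}$, and since $\atrafo\gaussian$ is a centered Gaussian vector with covariance $\atrafo\atrafo^\T\preceq\opnorm{\atrafo}^2\I{\d}$, a standard Gaussian comparison (Sudakov--Fern\'ique inequality) yields $\meanwidth{\atrafo^\T\tilde\sset}\leq\opnorm{\atrafo}\cdot\meanwidth{\tilde\sset}$, hence $\effdim{\atoms^\T\sset}\leq\opnorm{\atrafo}^2\cdot\effdim{\tilde\sset}$. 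Thus the sample-size condition \eqref{eq:appl:correlated:undercomplete:adapted:meas} implies condition \eqref{eq:results:bounds:global:meas}, and Theorem~\ref{thm:results:bounds:global} yields, with the stated probability, $\lnorm{\atoms^\T\solu\xpv - \targetlatpv}\lesssim\max\{1,\subgparam\cdot\modeldev{\targetlatpv}\}\cdot\delta + \modelcovar{\targetlatpv}$ for every minimizer $\solu\xpv$ of $\refklasso{\sset}$.

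The second step would translate this bound. Because $\tilde{\atoms}$ has full column rank, $\psinv{\tilde{\atoms}}\tilde{\atoms} = \I{\d}$, so $\tilde{\atoms}^\T(\psinv{\tilde{\atoms}})^\T = \I{\d}$; since $\solu\xpv\in\sset = (\psinv{\tilde{\atoms}})^\T\tilde\sset$ this gives $\solu\xpv = (\psinv{\tilde{\atoms}})^\T\tilde{\atoms}^\T\solu\xpv$, and combined with $\atoms^\T(\psinv{\tilde{\atoms}})^\T = \atrafo^\T$ it produces the key relation $\atoms^\T\solu\xpv = \atrafo^\T\,\tilde{\atoms}^\T\solu\xpv$, i.e.\ $\tilde{\atoms}^\T\solu\xpv = \atrafo^{-\T}\atoms^\T\solu\xpv$. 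Writing $\tilde{\atoms}^\T\solu\xpv - \targetlatpv = \atrafo^{-\T}(\atoms^\T\solu\xpv - \targetlatpv) - (\I{\d} - \atrafo^{-\T})\targetlatpv$ and applying the triangle inequality together with the bound from Step~1 gives \eqref{eq:appl:correlated:undercomplete:adapted:bound}. For the bound on $\solu\xpv$ itself I would first check that $\target\xpv \coloneqq (\psinv{\tilde{\atoms}})^\T\atrafo^{-\T}\targetlatpv$ indeed lies in $\sset$, which follows since $\targetlatpv\in\atrafo^\T\tilde\sset$ forces $\atrafo^{-\T}\targetlatpv\in\tilde\sset$. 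As both $\solu\xpv$ and $\target\xpv$ lie in $(\psinv{\tilde{\atoms}})^\T\tilde\sset$, on which $(\psinv{\tilde{\atoms}})^\T$ is left-inverted by $\tilde{\atoms}^\T$, and since $\tilde{\atoms}^\T\target\xpv = \atrafo^{-\T}\targetlatpv$, one obtains $\solu\xpv - \target\xpv = (\psinv{\tilde{\atoms}})^\T(\tilde{\atoms}^\T\solu\xpv - \atrafo^{-\T}\targetlatpv) = (\psinv{\tilde{\atoms}})^\T\atrafo^{-\T}(\atoms^\T\solu\xpv - \targetlatpv)$; taking norms and using $\opnorm{(\psinv{\tilde{\atoms}})^\T\atrafo^{-\T}} = \opnorm{\atrafo^{-1}\psinv{\tilde{\atoms}}}$ together with Step~1 yields \eqref{eq:appl:correlated:undercomplete:adapted:boundxpv}.

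The argument is essentially bookkeeping and no step is genuinely deep. The two points that require a little care are the identity $\atoms^\T\sset = \atrafo^\T\tilde\sset$ together with the Gaussian-comparison bound $\meanwidth{\atrafo^\T\tilde\sset}\leq\opnorm{\atrafo}\cdot\meanwidth{\tilde\sset}$ --- this is precisely what makes the complexity term in \eqref{eq:appl:correlated:undercomplete:adapted:meas} depend only on $\tilde\sset$ and on the distortion $\atrafo = \psinv{\tilde{\atoms}}\atoms$, and not on the (unknown) true mixing matrix $\atoms$ --- and the verification that the reference parameter vector $\target\xpv$ actually lies in the adapted hypothesis set $\sset$, so that comparing $\solu\xpv$ with $\target\xpv$ is meaningful. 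Everything else is just propagating the inequality through the bounded linear maps $\atrafo^{-\T}$ and $(\psinv{\tilde{\atoms}})^\T\atrafo^{-\T}$.
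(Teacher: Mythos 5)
Your proposal is correct and follows essentially the same route as the paper's proof: apply Theorem~\ref{thm:results:bounds:global} with $\sset = (\psinv{\tilde{\atoms}})^\T\tilde\sset$, use the identity $\atoms^\T\sset = \atrafo^\T\tilde\sset$ together with a Gaussian comparison inequality (the paper invokes Slepian, you invoke Sudakov--Fern\'ique; for this bound they are interchangeable) to reduce \eqref{eq:appl:correlated:undercomplete:adapted:meas} to \eqref{eq:results:bounds:global:meas}, and then propagate the resulting bound on $\lnorm{\atoms^\T\solu\xpv - \targetlatpv}$ through $\atrafo^{-\T}$ and $(\psinv{\tilde{\atoms}})^\T\atrafo^{-\T}$ via the relation $\solu\xpv = (\psinv{\tilde{\atoms}})^\T\tilde{\atoms}^\T\solu\xpv$. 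Your explicit check that $\target\xpv \in \sset$ is a small point the paper leaves implicit, but the argument is otherwise identical.
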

\begin{proof}
	We simply apply Theorem~\ref{thm:results:bounds:global} with $\sset = (\psinv{\tilde{\atoms}})^\T\tilde\sset$.
	Let us first observe that
	\begin{equation}
		\targetlatpv \in \atrafo^\T \tilde\sset = \atoms^\T (\psinv{\tilde{\atoms}})^\T \tilde\sset = \atoms^\T \sset.
	\end{equation}
	Furthermore, Slepian’s inequality \cite[Lem.~8.25]{foucart2013cs} yields
	\begin{equation}
		\meanwidth{\atoms^\T \sset} = \meanwidth{\atrafo^\T \tilde\sset} \leq \opnorm{\atrafo} \cdot \meanwidth{\tilde\sset},
	\end{equation}
	so that the assumption \eqref{eq:appl:correlated:undercomplete:adapted:meas} implies \eqref{eq:results:bounds:global:meas}.
	On the event of Theorem~\ref{thm:results:bounds:global}, we therefore obtain
	\begin{align}
		\lnorm{\tilde{\atoms}^\T\solu\xpv - \targetlatpv} &\leq \lnorm{\tilde{\atoms}^\T\solu\xpv - \atrafo^{-\T} \targetlatpv} + \lnorm{(\I{\d} - \atrafo^{-\T})\targetlatpv} \\*
		&= \opnorm{\atrafo^{-\T}} \cdot \lnorm{\atrafo^{\T} \tilde{\atoms}^\T\solu\xpv - \targetlatpv} + \lnorm{(\I{\d} - \atrafo^{-\T})\targetlatpv} \\
		&= \opnorm{\atrafo^{-1}} \cdot \lnorm{\atoms^{\T} \underbrace{(\psinv{\tilde{\atoms}})^\T \tilde{\atoms}^\T\solu\xpv}_{\stackrel{(\ast)}{=} \solu\xpv} - \targetlatpv} + \lnorm{(\I{\d} - \atrafo^{-\T})\targetlatpv} \\
		&\stackrel{\eqref{eq:results:bounds:global:bound}}{\lesssim} \opnorm{\atrafo^{-1}} \cdot \big( \max\{1, \subgparam \cdot \modeldev{\targetlatpv}\} \cdot \delta + \modelcovar{\targetlatpv} \big) + \lnorm{(\I{\d} - \atrafo^{-\T})\targetlatpv} \ ,
	\end{align}
	where $(\ast)$ follows from $\solu\xpv \in \ran((\psinv{\tilde{\atoms}})^\T)$ and the identity $(\psinv{\tilde{\atoms}})^\T \tilde{\atoms}^\T (\psinv{\tilde{\atoms}})^\T = (\psinv{\tilde{\atoms}})^\T$.
	
	In order to establish \eqref{eq:appl:correlated:undercomplete:adapted:boundxpv}, we observe that
	\begin{equation}
		(\psinv{\tilde{\atoms}})^\T = (\psinv{\tilde{\atoms}})^\T \atrafo^{-\T} \atrafo^\T =  (\psinv{\tilde{\atoms}})^\T \atrafo^{-\T} \atoms^\T (\psinv{\tilde{\atoms}})^\T.
	\end{equation}
	Since $\solu\xpv \in \ran((\psinv{\tilde{\atoms}})^\T)$, this implies $\solu\xpv = (\psinv{\tilde{\atoms}})^\T \atrafo^{-\T} \atoms^\T \solu\xpv$.
	Using the definition of $\target\xpv$, we therefore end up with
	\begin{align}
		\lnorm{\solu\xpv - \target\xpv} = \lnorm{(\psinv{\tilde{\atoms}})^\T \atrafo^{-\T} (\atoms^\T \solu\xpv - \targetlatpv)} \leq \opnorm{\atrafo^{-1}\psinv{\tilde{\atoms}}} \cdot \lnorm{\atoms^\T \solu\xpv - \targetlatpv} \ ,
	\end{align}
	and \eqref{eq:appl:correlated:undercomplete:adapted:boundxpv} now follows from \eqref{eq:results:bounds:global:bound} again.
\end{proof}
If $\tilde{\atoms} = \atoms$ (and therefore $\atrafo = \I{\d}$), the error bound of \eqref{eq:appl:correlated:undercomplete:adapted:bound} still coincides with \eqref{eq:results:bounds:global:bound}, whereas \eqref{eq:appl:correlated:undercomplete:adapted:boundxpv} provides an approximation guarantee for the actual solution  vector $\solu\xpv$ of \eqref{eq:results:bounds:klasso}.
This achievement of Corollary~\ref{cor:appl:correlated:undercomplete:adapted} is due to the specific choice of $\sset$, which is the image of a lower dimensional subset $\tilde\sset$ that accounts for the effect of the mixing matrix $\atoms$ and thereby resolves the ambiguity pointed out above.
The only price to pay is the presence of an additional factor $\opnorm{\psinv{\atoms}}$ in \eqref{eq:appl:correlated:undercomplete:adapted:boundxpv}.
It is also noteworthy that the complexity in \eqref{eq:appl:correlated:undercomplete:adapted:meas} is now measured in terms of the hypothesis set $\tilde{\sset} \subset \R^\d$, allowing us to exploit structural assumptions on $\targetlatpv$ in a direct manner.
	
A practical feature of Corollary~\ref{cor:appl:correlated:undercomplete:adapted} is that it even remains valid when the estimators $\solu\xpv$ and $\solu\latpv \coloneqq \tilde{\atoms}^\T\solu\xpv$ are based on an approximate mixing matrix $\tilde{\atoms}$.
The quality of this approximation is essentially captured by the transformation matrix $\atrafo = \psinv{\tilde{\atoms}} \atoms$, in the sense that $\atrafo \approx \I{\d}$ if $\tilde{\atoms} \approx \atoms$.
This particularly affects the sample size in \eqref{eq:appl:correlated:undercomplete:adapted:meas} as well as the error bound in \eqref{eq:appl:correlated:undercomplete:adapted:bound}, which is in fact only significant if $\tilde{\atoms}$ is sufficiently close to $\atoms$.
Remarkably, the output vector $\solu\xpv$ may still constitute a consistent estimator of $\target\xpv$, since $\atrafo$ just appears in terms of a multiplicative constant in \eqref{eq:appl:correlated:undercomplete:adapted:boundxpv}. 
However, we point out that \eqref{eq:appl:correlated:undercomplete:adapted:boundxpv} is actually a weaker statement than \eqref{eq:appl:correlated:undercomplete:adapted:bound}, as the former does not necessarily address our initial concern of learning semi-parametric output rules.

Let us conclude with some possible applications of Corollary~\ref{cor:appl:correlated:undercomplete:adapted}, and in particular \eqref{eq:appl:correlated:undercomplete:adapted:boundxpv}:
\begin{remark}
\begin{rmklist}
\item\label{rmk:appl:correlated:underdetermined:applications}
	\emph{Clustered variables.}
	A situation of practical interest are \emph{clustered features}, where the components of $\x$ are divided into (disjoint) groups of strongly correlated variables.
	In this case, the associated hypothesis set $\sset \coloneqq (\psinv{\tilde{\atoms}})^\T\tilde\sset$ would basically turn into a group constraint, which leads to  a block-like support of the parameter vector $\target\xpv \in \sset$.
	A formal study of such a problem scenario is in principle straightforward, but however requires a certain technical effort and therefore goes beyond the scope of this paper. 
\item\label{rmk:appl:correlated:underdetermined:unknownmixing}
	\emph{Unknown mixing matrix.}
	The above strategy clearly relies on (partial) knowledge of $\atoms$.
	The same problem persists in the general setup of Theorem~\ref{thm:results:bounds:global}, as the mixing matrix is needed to construct the estimator $\solu\latpv \coloneqq \atoms^\T \solu\xpv$.
	Thus, if $\atoms$ is unknown, the error bound \eqref{eq:results:bounds:global:bound} is rather of theoretical interest, merely indicating that --- in principle --- estimation via \eqref{eq:results:bounds:klasso} is feasible.
	Nevertheless, one may still draw heuristic conclusions from Theorem~\ref{thm:results:bounds:global}. For instance, if $\atoms$ generates clustered features as in \ref{rmk:appl:correlated:underdetermined:applications}, the support pattern of the (possibly non-unique) parameter vector $\target\xpv$ would at least indicate what clusters are active. This approach was already discussed in a previous work by the authors \cite{genzel2016fs}, which is inspired by the example of \emph{mass spectrometry data} (see also Remark~\ref{rmk:appl:correlated:overdetermined}).
\item\label{rmk:appl:correlated:underdetermined:covarmatrix}
	\emph{The case of $\p = \d$.} In this situation, the mixing matrix $\atoms$ is invertible and one may simply select $\tilde{\atoms} = \I{\d}$ in Corollary~\ref{cor:appl:correlated:undercomplete:adapted} (implying that $\atrafo = \atoms$ and $\sset = \tilde\sset$).
	For example, this setup proves very useful when considering a single-index model with \emph{unknown covariance structure}: 
	Let Assumption~\ref{model:results:observations} be satisfied with $y = \fobs(\sp{\x}{\tru\xpv})$ for some index vector $\tru\xpv \in \sset \subset \R^\d$ and output function $\fobs \colon \R \to \R$. Thus, compared to Assumption~\ref{model:appl:nonlinear:sim}, the input vector $\x = \atoms \lat$ is not necessarily isotropic and its covariance matrix $\Covmatr \coloneqq \atoms \atoms^\T \in \R^{\d \times \d}$ is unknown.
	An application of Corollary~\ref{cor:appl:correlated:undercomplete:adapted} with $\targetlatpv = \atoms^\T \tru\xpv$ then shows that, according to \eqref{eq:appl:correlated:undercomplete:adapted:boundxpv}, the minimizer $\solu\xpv$ is an estimator of the unknown index vector $\tru\xpv = \target\xpv$. Noteworthy, the generalized Lasso \eqref{eq:results:bounds:klasso} does not require any knowledge of $\Covmatr$ in this case.
	\qedhere
\end{rmklist}
\label{rmk:appl:correlated:underdetermined}
\end{remark}

\subsubsection{The Overdetermined Case \texorpdfstring{($\p < \d$)}{(p \textless \ d)} and Noisy Data}
\label{subsec:appl:correlated:overcomplete}

In most real-world applications, the input data is corrupted by noise.
With regard to our statistical model setup, this basically means that the input vector $\x = \atoms \lat$ is generated from two types of latent factors, namely those affecting the output variable $y$ and those that do not.
Let us make this idea more precise:
\begin{assumption}[Noisy data]\label{model:appl:correlated:overcomplete}
	Let Assumption~\ref{model:results:observations} be satisfied and assume that the latent factors split into two parts, $\lat = (\latsig,\latnoise)$, where $\latsig = (v_1, \dots, v_{\d_1}) \in \R^{\d_1}$ are referred to as \emph{signal variables} and $\latnoise = (n_1, \dots, n_{\d_2}) \in \R^{\d_2}$ as \emph{noise variables} ($\implies \ \d = \d_1 + \d_2$).
	The output variable does only depend on the signal variables, i.e.,
	\begin{equation}\label{eq:appl:correlated:overcomplete}
		y = \Fobs(\latsig) = \Fobs(v_1, \dots, v_{\d_1})
	\end{equation}
	for a measurable \emph{output function} $\Fobs \colon \R^{\d_1} \to \R$ which can be random (independently of $\latsig$), and we also assume that $y$ and $\latnoise$ are independent.
	According to the above partition, the mixing matrix takes the form $\atoms = [\atoms_\latsig, \atoms_\latnoise] \in \R^{\p \times \d}$ where $\atoms_\latsig \in \R^{\p \times \d_1}$ is associated with $\latsig$ and $\atoms_\latnoise \in \R^{\p \times \d_2}$ with $\latnoise$.
	Thus, the input vector can be decomposed as $\x = \atoms \lat = \atoms_\latsig \latsig + \atoms_\latnoise \latnoise$.
\end{assumption}
The key concern of Assumption~\ref{model:appl:correlated:overcomplete} is that the factors of $\latnoise$ do not contribute to the output variable $y$ and are therefore regarded as \emph{noise}.
In this context, it could easily happen that $\d > \p$, so that $\atoms$ is not injective anymore.
Retrieving the latent factors $\lat$ from $\x$ then becomes impossible, even if $\atoms$ is exactly known (cf. \eqref{eq:appl:correlated:undercomplete:latentextraction}).
A typical example case would be component-wise ``background'' noise within the input vector $\x$, i.e., $\atoms_\latnoise \in \R^{\p \times \p}$ is a diagonal matrix and we have $\d = \d_1 + \d_2 = \d_1 + \p > \p$.
Moreover, we emphasize that Assumption~\ref{model:appl:correlated:overcomplete} should not be confused with the variable selection problem from \eqref{eq:appl:nonlinear:mim:vsmodel}. 
The latter comes along with the task to detect the \emph{unknown} subset of active variables, whereas the above partition into signal and noise variables is known to a certain extent.\footnote{This does not necessarily mean that $\latsig$ and $\latnoise$ are directly accessible, but rather that there exists (partial) knowledge of the mixing matrix $\atoms = [\atoms_\latsig, \atoms_\latnoise]$ and the associated noise pattern.}
For that reason, the order of the variables in $\lat = (\latsig,\latnoise)$ is rather a matter of convenience and does not restrict the generality.

Let us now apply the mismatch principle (Recipe~\ref{rec:results:mismatch}) to investigate the estimation capacity of the generalized Lasso \eqref{eq:results:bounds:klasso} under the noise model of Assumption~\ref{model:appl:correlated:overcomplete}. 
According to the output rule of \eqref{eq:appl:correlated:overcomplete}, it is natural to select a target set of the form $\tset = \tset_\latsig \times \R^{\d_2} \subset \R^{\d_1 + \d_2}$. Indeed, given any $\targetlatpv = (\targetlatpv_\latsig, \targetlatpv_\latnoise) \in \tset$, we are actually only interested in the signal component $\targetlatpv_\latsig \in \tset_\latsig$ --- encoding the desired (parametric) information about \eqref{eq:appl:correlated:overcomplete} in terms of $\tset_\latsig \subset \R^{\d_1}$ --- while the noise component $\targetlatpv_\latnoise \in \R^{\d_2}$ is irrelevant for our purposes.
This important observation is particularly consistent with the following decomposition of the mismatch covariance:
\begin{proposition}\label{prop:appl:correlated:overcomplete:modelcovar}
	Let Assumption~\ref{model:appl:correlated:overcomplete} be satisfied. 
	For every $\targetlatpv = (\targetlatpv_\latsig, \targetlatpv_\latnoise) \in \R^{\d_1+\d_2}$, we have that
	\begin{equation}\label{eq:appl:correlated:overcomplete:modelcovar}
		\modelcovar[\lat,y]{\targetlatpv}^2 = \lnorm[\big]{\mean[(y - \sp{\latsig}{\targetlatpv_\latsig}) \latsig]}^2 + \lnorm{\targetlatpv_\latnoise}^2 = \modelcovar[\latsig,y]{\targetlatpv_\latsig}^2 + \lnorm{\targetlatpv_\latnoise}^2 \ .
	\end{equation}
\end{proposition}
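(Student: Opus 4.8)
The plan is to compute the vector $\mean[(y - \sp{\lat}{\targetlatpv})\lat] \in \R^{\d_1+\d_2}$ directly, exploiting the block structure $\lat = (\latsig,\latnoise)$ and $\targetlatpv = (\targetlatpv_\latsig,\targetlatpv_\latnoise)$, and then reading off the two claimed terms as the squared Euclidean norms of its signal and noise blocks. Before the computation I would record the two structural facts that make all cross terms vanish. First, since $\lat$ is centered and isotropic, i.e.\ $\mean[\lat\lat^\T] = \I{\d}$, the diagonal blocks show that $\latsig$ and $\latnoise$ are themselves centered isotropic sub-Gaussian vectors, while the off-diagonal block gives $\mean[\latsig\latnoise^\T] = \vnull$; in particular $(\latsig,y)$ again obeys Assumption~\ref{model:results:observations}, so that $\modelcovar[\latsig,y]{\cdot}$ is well-defined in the sense of Definition~\ref{def:results:parameters:mismatch}. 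Second, by Assumption~\ref{model:appl:correlated:overcomplete} the variable $y = \Fobs(\latsig)$ is independent of $\latnoise$, and $\latnoise$ is centered, whence $\mean[y\latnoise] = \mean[y]\cdot\mean[\latnoise] = \vnull$.

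Next I would split $\sp{\lat}{\targetlatpv} = \sp{\latsig}{\targetlatpv_\latsig} + \sp{\latnoise}{\targetlatpv_\latnoise}$ and evaluate the signal block $\mean[(y - \sp{\lat}{\targetlatpv})\latsig]$ and the noise block $\mean[(y - \sp{\lat}{\targetlatpv})\latnoise]$ separately. In the signal block, the cross term $\mean[\sp{\latnoise}{\targetlatpv_\latnoise}\latsig] = \mean[\latsig\latnoise^\T]\targetlatpv_\latnoise$ vanishes by the uncorrelatedness above, leaving $\mean[(y - \sp{\latsig}{\targetlatpv_\latsig})\latsig]$, i.e.\ precisely the vector whose Euclidean norm equals $\modelcovar[\latsig,y]{\targetlatpv_\latsig}$. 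In the noise block, $\mean[y\latnoise] = \vnull$ by independence, the cross term $\mean[\sp{\latsig}{\targetlatpv_\latsig}\latnoise] = \mean[\latnoise\latsig^\T]\targetlatpv_\latsig$ vanishes again, and $\mean[\sp{\latnoise}{\targetlatpv_\latnoise}\latnoise] = \mean[\latnoise\latnoise^\T]\targetlatpv_\latnoise = \targetlatpv_\latnoise$ by isotropy of $\latnoise$; hence the noise block equals $-\targetlatpv_\latnoise$.

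Assembling the two blocks gives $\mean[(y - \sp{\lat}{\targetlatpv})\lat] = \bigl(\mean[(y - \sp{\latsig}{\targetlatpv_\latsig})\latsig],\, -\targetlatpv_\latnoise\bigr)$, and since the squared Euclidean norm of a concatenated vector is the sum of the squared norms of its parts, this yields $\modelcovar[\lat,y]{\targetlatpv}^2 = \lnorm[\big]{\mean[(y - \sp{\latsig}{\targetlatpv_\latsig})\latsig]}^2 + \lnorm{\targetlatpv_\latnoise}^2 = \modelcovar[\latsig,y]{\targetlatpv_\latsig}^2 + \lnorm{\targetlatpv_\latnoise}^2$, as claimed. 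There is no genuine obstacle here: the argument is a short second-moment calculation, and the only point requiring care is to invoke the uncorrelatedness $\mean[\latsig\latnoise^\T] = \vnull$ and the independence of $y$ and $\latnoise$ at exactly the right places so that every cross term drops out.
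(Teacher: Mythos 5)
Your proof is correct and follows essentially the same route as the paper: decompose the squared norm into the signal and noise blocks, kill the cross terms via the off-diagonal block of the isotropy condition $\mean[\lat\lat^\T]=\I{\d}$ and the independence of $y$ and $\latnoise$ (together with $\mean[\latnoise]=\vnull$), and use isotropy of the noise block to identify $\mean[\sp{\latnoise}{\targetlatpv_\latnoise}\latnoise]=\targetlatpv_\latnoise$. The only difference is that you spell out the intermediate vector $\bigl(\mean[(y-\sp{\latsig}{\targetlatpv_\latsig})\latsig],\,-\targetlatpv_\latnoise\bigr)$ explicitly, which the paper leaves implicit.
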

\begin{proof}
	Using the isotropy of $\lat = (\latsig, \latnoise)$ and the independence of $y$ and $\latnoise$, we obtain
	\begin{align}
		\modelcovar[\lat,y]{\targetlatpv}^2 &= \lnorm[\big]{\mean[(y - \sp{\lat}{\targetlatpv}) \lat]}^2 \\*
		&= \lnorm[\big]{\mean[(y - \sp{\latsig}{\targetlatpv_\latsig} - \sp{\latnoise}{\targetlatpv_\latnoise}) \latsig]}^2 + \lnorm[\big]{\mean[(y - \sp{\latsig}{\targetlatpv_\latsig} - \sp{\latnoise}{\targetlatpv_\latnoise}) \latnoise]}^2 \\
		&= \lnorm[\big]{\mean[(y - \sp{\latsig}{\targetlatpv_\latsig}) \latsig]}^2 + \lnorm[\big]{\mean[\sp{\latnoise}{\targetlatpv_\latnoise} \latnoise]}^2 = \lnorm[\big]{\mean[(y - \sp{\latsig}{\targetlatpv_\latsig}) \latsig]}^2 + \lnorm{\targetlatpv_\latnoise}^2  \ . \qedhere
	\end{align}
\end{proof}
The identity \eqref{eq:appl:correlated:overcomplete:modelcovar} asks us to select a target vector $\targetlatpv = (\targetlatpv_\latsig, \targetlatpv_\latnoise) \in \tset$ such that the ``signal-related'' mismatch covariance $\modelcovar[\latsig,y]{\targetlatpv_\latsig}$ is sufficiently small, and at the same time, the magnitude of noise component $\targetlatpv_\latnoise$ must not be too large.
This task is unfortunately not straightforward because the transformed hypothesis set $\atoms^\T\sset \subset \R^{\d}$ is not full-dimensional if $\d > \p$, so that the simplification of Remark~\ref{rmk:results:bounds}\ref{rmk:results:bounds:mismatchpractical} is not applicable anymore.
For example, solely enforcing $\targetlatpv_\latnoise = \vnull$ could eventually lead to a large value of $\modelcovar[\latsig,y]{\targetlatpv_\latsig}$, as the condition $\targetlatpv \in \atoms^\T\sset$ implies an `overdetermined' equation system with $\targetlatpv_\latsig = \atoms_\latsig^\T \target\xpv$ and $\targetlatpv_\latnoise = \atoms_\latnoise^\T \target\xpv$.
More generally speaking, the minimization problem \eqref{eq:results:mismatch:modelcovar:minimization} in Recipe~\ref{rec:results:mismatch}\ref{rec:results:mismatch:modelcovar} is not separable in the signal and noise component, which can cause difficulties.

For that reason, we do not exactly apply Recipe~\ref{rec:results:mismatch}\ref{rec:results:mismatch:modelcovar} in the following, but rather construct an admissible target vector $\targetlatpv = (\targetlatpv_\latsig, \targetlatpv_\latnoise) \in \tset \intersec \atoms^\T \sset$ that approximately solves \eqref{eq:results:mismatch:modelcovar:minimization}.
Motivated by the decomposition of \eqref{eq:appl:correlated:overcomplete:modelcovar}, our basic idea is to first minimize $\latpv_\latsig \mapsto \modelcovar[\latsig,y]{\latpv_\latsig}$ and then to choose among all \emph{admissible} vectors in $\atoms_\latnoise^\T \sset$ the one of minimal magnitude.
Such an approach appears in fact quite natural, since our primary goal is to accurately estimate the signal component $\targetlatpv_\latsig$ by $\atoms_\latsig^\T \solu\xpv$, with $\solu\xpv$ being a minimizer of \eqref{eq:results:bounds:klasso}.
Thus, let us simply define
\begin{equation}\label{eq:appl:correlated:overcomplete:modelcovarsig}
	\targetlatpv_\latsig \coloneqq \argmin_{\latpv_\latsig \in \tset_\latsig \intersec \atoms_\latsig^\T\sset} \modelcovar[\latsig,y]{\latpv_\latsig}.
\end{equation}
Denoting the preimage of $\targetlatpv_\latsig$ by $\sset_{\latsig} \coloneqq \{ \xpv \in \sset \suchthat \atoms_\latsig^\T \xpv = \targetlatpv_\latsig \} \subset \R^\p$, we are now allowed to select any vector $\targetlatpv_\latnoise \in \atoms_\latnoise^\T \sset_{\latsig} \subset \R^{\d_2}$.
The one of minimal $\l{2}$-norm is therefore
\begin{equation}
	\targetlatpv_\latnoise \coloneqq \argmin_{\latpv_\latnoise \in \atoms_\latnoise^\T \sset_{\latsig}} \lnorm{\latpv_\latnoise} = \argmin_{\xpv \in  \sset_{\latsig}} \lnorm{\atoms_\latnoise^\T \xpv} \ ,
\end{equation}
and we also set $\modelcovar[\latnoise]{\targetlatpv_\latsig} \coloneqq \lnorm{\targetlatpv_\latnoise}$, in order to indicate the dependence $\targetlatpv_\latsig$.

According to Recipe~\ref{rec:results:mismatch}\ref{rec:results:mismatch:bound}, this choice of $\targetlatpv$ yields the following bound:
\begin{multline}
	\lnorm{\atoms_\latsig^\T\solu\xpv - \targetlatpv_\latsig} \leq \lnorm{\atoms^\T\solu\xpv - \targetlatpv} \\ \stackrel{\mathllap{\eqref{eq:results:bounds:global:bound-decay}}}{\lesssim} \max\{\subgparam, \subgparam^2 \cdot \modeldev{\targetlatpv}\} \cdot \Big(\frac{\effdim{\atoms^\T\sset}}{n}\Big)^{1/4} + \Big( \modelcovar[\latsig,y]{\targetlatpv_\latsig}^2 + \modelcovar[\latnoise]{\targetlatpv_\latsig}^2 \Big)^{1/2}. \label{eq:appl:correlated:overcomplete:errorbnd}
\end{multline}
The significance of this error estimate clearly depends on the size of the bias terms $\modelcovar[\latsig,y]{\targetlatpv_\latsig}$ and $\modelcovar[\latnoise]{\targetlatpv_\latsig}$.
The former parameter corresponds to the mismatch covariance of the noiseless data model $(\latsig, y)$ and can be treated as in the isotropic case of Subsection~\ref{subsec:appl:nonlinear}. 
On the other hand, $\modelcovar[\latnoise]{\targetlatpv_\latsig}$ captures the impact of the noise variables $\latnoise$ on the input data $\x$.
As already pointed out above, the presence of such a noise term is inevitable to some extent, since the ``ideal'' target vector $\target{\tilde{\latpv}} \coloneqq (\targetlatpv_\latsig, \vnull) \in \R^{\d_1 + \d_2}$ is usually not contained in $\atoms^\T\sset$.\footnote{Or equivalently, there does not exist a linear hypothesis function $\func = \sp{\cdot}{\target\xpv}$ with $\target\xpv \in \sset$ and $\target{\tilde{\latpv}} = \atoms^\T \target\xpv$.}
From a more practical perspective, $\modelcovar[\latnoise]{\targetlatpv_\latsig}$ serves as a measure of the \emph{noise power} of the underlying observation model $(\x,y)$ and its reciprocal, $1 / \modelcovar[\latnoise]{\targetlatpv_\latsig}$, can be interpreted as the signal-to-noise ratio.

\begin{remark}
\begin{rmklist}
\item\label{rmk:appl:correlated:overdetermined:fspaper}
	\emph{Previous approaches.} The above methodology is inspired by \cite{genzel2016fs}, where the authors investigated noisy data in conjunction with Gaussian single-index models.
	The key idea of that work is to specify a parameter vector $\target\xpv \in \R^{\p}$ such that the linear mapping $\x \mapsto \sp{\x}{\target\xpv}$ imitates the output model for $y$ as well as possible, coining the notion of \emph{optimal representations}.
	Interestingly, the mismatch principle essentially leads us to the same results as in \cite{genzel2016fs}, but in a much more systematic and less technical way.
	In particular, the error bound of \eqref{eq:appl:correlated:overcomplete:errorbnd} is not exclusively restricted to single-index models.
\item\label{rmk:appl:correlated:overdetermined:massspec}
	\emph{Real-world data.} A natural way to think of Assumption~\ref{model:appl:correlated:overcomplete} is that the columns of $\atoms_\latsig$ form building blocks (atoms) of the input vector and the signal variables $\latsig_i$ determine their individual contributions to each sample $\x_i$, $i = 1, \dots, n$.
	The same interpretation applies to the noise variables $\latnoise_i$, but they do not affect the actual output $y_i$.
		
	A prototypical example of real-world data that was extensively studied in \cite{genzel2016fs} is so-called \emph{mass spectrometry data}. In this case, the columns of $\atoms_\latsig$ do simply form (discretized) Gaussian-shaped peaks, each one representing a certain type of \emph{molecule} (or a compound). The signal variables are in turn proportional to the molecular concentration of these molecules.
	On the other hand, the output variable $y_i$ only takes values in $\{-1,+1\}$, indicating the \emph{health status} of a human individual (e.g., healthy or suffering from a specific disease).
	The key task is now to use empirical risk minimization to identify those molecules (features) that are relevant to the health status, eventually enabling for early diagnostics and a deeper understanding of pathological mechanisms.
	In the context of statistical learning, this challenge is closely related to the variable selection model considered in \eqref{eq:appl:nonlinear:mim:vsmodel}.
	For more details about this specific problem setup, we refer to \cite{genzel2016fs,genzel2015master}, focusing on a theoretical analysis, as well as to \cite{conrad2015spa} for practical aspects and numerical experiments.
	\qedhere
\end{rmklist} \label{rmk:appl:correlated:overdetermined}
\end{remark}

\section{Related Literature and Approaches}
\label{sec:literature}

This part gives a brief overview of several recent results from the literature that are related to the main ideas and conclusions of this work.
Our intention here is not to provide a complete list of existing works but rather to point out important conceptual similarities and differences of our approach to others.
In particular, we do only marginally address the specific examples of output models that were already investigated in Subsection~\ref{subsec:appl:nonlinear}.

\subsection{Statistical Learning Theory}
\label{subsec:literature:statlearn}

The setting and terminology of this work is clearly based on statistical learning theory; see \cite{vapnik1998learning,cucker2007learning,hastie2009elements,shalev2014understanding} for comprehensive overviews.
As already pointed out in the introduction, one of the key objectives in this research area is to study empirical risk minimization \eqref{eq:intro:emplossmin-general} as a means to approximate the inaccessible problem of expected risk minimization \eqref{eq:intro:explossmin-general}.
Despite many different variants, there are basically two common ways to assess the quality of a solution $\solu\func \in \funcclass$ to \eqref{eq:intro:emplossmin-general}:
\begin{alignat}{2}
	&\text{Prediction:} && \qquad \mean{}_{(\x,y)}[(y - \solu\func(\x))^2] - \mean{}_{(\x,y)}[(y - \soluexp\func(\x))^2], \label{eq:literature:statlearn:prederr} \\*[.5\baselineskip] 
	&\text{Estimation:} && \qquad \mean{}_{\x}[(\solu\func(\x) - \soluexp\func(\x))^2], \label{eq:literature:statlearn:esterr}
\end{alignat}
where $\soluexp\func \in \funcclass$ is a minimizer of \eqref{eq:intro:explossmin-general}.
The main purpose of \eqref{eq:literature:statlearn:prederr} is to compare the predictive power of the empirical risk minimizer $\solu\func$ to the best possible in $\funcclass$, which is $\soluexp\func$ by definition.
In contrast, the estimation error \eqref{eq:literature:statlearn:esterr} measures how well $\solu\func$ approximates $\soluexp\func$, while not depending on the true output variable $y$.

Among a large amount of works on estimation and prediction problems, we think that the most related ones are those by Mendelson \cite{mendelson2014learning,mendelson2014learninggeneral}, which establish a very general learning framework for empirical risk minimization.
His results are based on a few complexity parameters that relate the noise level and geometric properties of the hypothesis set to the estimation error and sample size.
A crucial feature of Mendelson's approach is that it does not rely on concentration inequalities but rather on a mild \emph{small ball condition}.
In that way, the theoretical bounds of \cite{mendelson2014learning,mendelson2014learninggeneral} remain even valid when the input and output variables are heavy-tailed.
This method turned out to be very useful for controlling non-negative empirical processes in general, and therefore found many applications beyond statistical learning.

While the problem setting of Mendelson certainly resembles ours, there are several crucial differences.
Firstly, we focus on much more restrictive model assumptions, in particular, sub-Gaussian data and \emph{linear} hypothesis functions. 
This makes our results less general, but at the same time, more explicit and accessible for concrete observation models.
The second important difference comes along with the first one: Instead of simply approximating the expected risk minimizer as in \eqref{eq:literature:statlearn:esterr}, we refine the classical estimation problem by the notion of target sets.
This allows us to encode the desired (parametric) information about the underlying output rule, which in turn forms the basis of the mismatch principle (Recipe~\ref{rec:results:mismatch}).
For a more technical comparison of our approach to \cite{mendelson2014learning,mendelson2014learninggeneral} see Remark~\ref{rmk:proofs:local:techniques} as well as Remark~\ref{rmk:results:outofbox}.

Finally, it is worth noting that the prediction error \eqref{eq:literature:statlearn:prederr} is only of minor interest in this work.
Indeed, since we restrict ourselves to linear hypothesis functions in $\funcclass$, one cannot expect that $\func(\x) = \sp{\x}{\xpv}$ yields a reliable predictor of $y$, unless it linearly depends on $\x$.

\subsection{Signal Processing and Compressed Sensing}
\label{subsec:literature:cs}

A considerable amount of the recent literature on (high-dimensional) signal processing and compressed sensing deals with \emph{non-linear} distortions of linear sampling schemes, such as quantized, especially $1$-bit compressed sensing \cite{boufounos2008onebit,zymnis2010compressed}.
In this context, single-index models proved very useful (cf. Assumption~\ref{model:appl:nonlinear:sim}), as they permit many different types of perturbations by means of the output function.
A remarkable line of research by Plan, Vershynin, and collaborators \cite{plan2013onebit,plan2013robust,plan2014tessellation,ai2014onebitsubgauss,plan2014highdim,plan2015lasso} as well as related works \cite{thrampoulidis2015lasso,knudson2016dithering,genzel2016estimation,baraniuk2017dithering,goldstein2018nongauss,thrampoulidis2018dithering,goldstein2016structured} has made significant progress in the statistical analysis of these model situations.
Somewhat surprisingly, it turned out that recovery of the index vector is already achievable by convex programming, even when the output function is unknown.
Of particular relevance to our approach is \cite{plan2015lasso}, where the generalized Lasso \eqref{eq:results:bounds:klasso} was studied for the first time with Gaussian single-index observations. Although not mentioned explicitly, the key argument of \cite{plan2015lasso} is based on an application of the orthogonality principle (cf. \eqref{eq:intro:orthogprinciple}), eventually leading to the same conclusions as in Subsection~\ref{subsec:appl:nonlinear:sim}.
In that light, our results show that the original ideas of \cite{plan2015lasso} apply to a much wider class of semi-parametric models.
Going beyond the case of single-index models is also a key aspect of a very recent work by Sattar and Oymak \cite{sattar2019quickly}. Similarly to our approach, they do not assume a \emph{realizable model}, i.e., a specific functional relationship between $\x$ and $y$. But on the other hand, their results do only yield estimation guarantees for the expected risk minimizer, with a much stronger focus on algorithmic issues and heavier tailed (sub-exponential) input data.

Next, it is worth pointing out that the discussion of Proposition~\ref{prop:appl:nonlinear:sim:mp} reveals several shortcomings of learning single-index models via the generalized Lasso, most importantly, the inconsistency of \eqref{eq:results:bounds:klasso} in the non-Gaussian case.
A recent series of papers by Yang et al.\ \cite{yang2015sparse,yang2017high,yang2017stein,yang2017nips} tackles this issue by investigating adaptive, possibly non-convex estimators.
While these findings are very interesting and actually include more complicated estimation tasks, such as phase retrieval (cf. Example~\ref{ex:appl:nonlinear:sim}\ref{ex:appl:nonlinear:sim:evenfunc}) and multiple-index models (cf. Subsection~\ref{subsec:appl:nonlinear:mim}), the proposed algorithms either assume knowledge of the output function or of the density function of the input vector.
Such prerequisites are in fact somewhat different from the conception of this work, but nevertheless, we think that these approaches are a promising direction of future research (see also Section~\ref{sec:conlcusion}).

In the situation of sparse index vectors and $\l{1}$-constraints, our results allow us to reproduce classical recovery guarantees from compressed sensing (cf. Subsection~\ref{subsec:appl:nonlinear:sset}). But despite obvious similarities, the theoretical foundations of compressed sensing rely on quite different concepts, most prominently the \emph{null space property} and \emph{restricted isometry property} \cite{foucart2013cs}.
This methodology indeed enables the treatment of many practically relevant sensing schemes that are by far not included in Assumption~\ref{model:results:observations}, e.g., Fourier subsampling or circular matrices.

\subsection{Correlated Features and Factor Analysis}
\label{subsec:literature:correl}

An important practical concern of this work is to allow for correlated variables in the input vector $\x$.
However, most traditional approaches in signal estimation theory and variable selection rely on assumptions that preclude stronger correlations between features, such as restricted isometry, restricted eigenvalue conditions, or irrepresentability, e.g., see \cite{geer2009lasso}.
For that reason, various strategies have been proposed in the literature to deal with these delicate situations, for instance, \emph{hierarchical clustering} \cite{buehlmann2013clustering} or \mbox{\emph{OSCAR/OWL}} \cite{bondell2008oscar,figueiredo2014owl,figueiredo2016owl}.
The recent work of Li et al.\ \cite{li2018graph} provides a general methodology to incorporate correlations between covariates by \emph{graph-based regularization}; see also Section~2 therein for a discussion of related literature in that direction.
Although the aforementioned results are limited to more specific model settings like noisy linear regression, they bear a resemblance to the basic idea of Corollary~\ref{cor:appl:correlated:undercomplete:adapted}, namely using the (estimated) mixing matrix $\tilde{\atoms}$ to adapt the constraint set of \eqref{eq:results:bounds:klasso}.

If the correlation structure of $\x = \atoms \lat$ is unknown, one rather aims at recovering the mixing matrix $\atoms$ based on the available sample set $\{\x_i\}_{i = 1}^n$.
This task is usually referred to as \emph{factor analysis}, or more generally, \emph{matrix factorization}, e.g., see \cite[Chap.~14]{hastie2009elements}.
Unfortunately, such a factor analysis is often unstable, especially when the problem is high-dimensional ($n \ll \p, \d$) and noise is present.
It was already succinctly emphasized by Vapnik in \cite[p.~12]{vapnik1998learning} that a direct approach is preferable in these situations:
\begin{quote}\itshape
	``If you possess a restricted amount of information for solving some problem, try to solve the problem directly and never solve the more general problem as an intermediate step. It is possible that the available information is sufficient for a direct solution but is insufficient for solving a more general intermediate problem.''
\end{quote}
The procedure of the generalized Lasso \eqref{eq:results:bounds:klasso} precisely reflects this perspective, as it only takes the ``unspoiled'' data $\{(\x_i,y_i)\}_{i = 1}^n$ as input.
If $\solu\xpv$ is a minimizer of \eqref{eq:results:bounds:klasso}, our theoretical findings show that  computing the actual estimator $\solu\latpv = \atoms^\T\solu\xpv$ still requires $\atoms$, but this merely concerns a simple post-processing step. 
In that way, one may hope that the entire estimation process is less sensitive to noise and a small sample size.
Moreover, as mentioned in Remark~\ref{rmk:appl:correlated:underdetermined}\ref{rmk:appl:correlated:underdetermined:unknownmixing}, it is often possible to extract the information of interest directly from $\solu\xpv$ by means of domain knowledge, rather than to explicitly compute $\solu\latpv = \atoms^\T\solu\xpv$.

\section{Conclusion and Outlook}
\label{sec:conlcusion}

The key achievements of this work are the mismatch principle (Recipe~\ref{rec:results:mismatch}) and the associated error bounds (Theorem~\ref{thm:results:bounds:global}, Theorem~\ref{thm:results:bounds:conic}, and Theorem~\ref{thm:proofs:results:local}).
As demonstrated in Section~\ref{sec:appl}, these results allow for the systematic derivation of theoretical guarantees for the generalized Lasso \eqref{eq:results:bounds:klasso} in a variety of model situations and thereby to explore its capability of fitting linear models to non-linear output rules.
For each of these applications, a crucial step is to select an appropriate \emph{target set}, containing all those parameter vectors that could be used to solve the estimation problem under investigation.
In this way, we were able to meet our initial concerns \ref{quest:intro:estimation}--\ref{quest:intro:complexity} at the end of Subsection~\ref{subsec:intro:statlearn}, most importantly, that an estimation result should not only be accurate but also \emph{interpretable} in a suitable manner.
In particular, this strategy allows us to a certain extent to ``bridge the gap'' between specific estimation tasks, such as learning single-index models, and the abstract setting of statistical learning.

From the application side, it has turned out that the estimator \eqref{eq:results:bounds:klasso} is surprisingly robust against different types of model uncertainties, such as an unknown output function or a misspecified mixing matrix.
Our findings indicate that the estimated parameter vector often carries the desired information (if correctly interpreted), without performing any complicated pre-processing steps.
A general practical conclusion is therefore that, despite its simplicity, constrained least squares minimization often yields a reliable guess of the true parameters. Hence, the outcome of \eqref{eq:results:bounds:klasso} could at least serve as a good initialization for a more sophisticated method that is specifically tailored to the problem of interest.

Let us close our discussion with several open issues, possible extensions as well as some interesting future research directions:
\begin{listing}
\item
	\emph{Hypothesis classes.} 
	As long as the hypothesis class $\funcclass \subset L^2(\R^\p, \mu_{\x})$ is a convex set (cf. \eqref{eq:intro:explossmin-general} and \eqref{eq:intro:emplossmin-general}), the arguments of Section~\ref{sec:proofs} essentially remain valid. More precisely, the applied concentration inequalities from Theorem~\ref{thm:proofs:local:multiplproc} and Theorem~\ref{thm:proofs:local:quadrproc} hold true in a much more general setting (see \cite{mendelson2016multiplier}).
	Nevertheless, such an adaption would require a certain technical effort and is actually not the major concern of this paper.
	For that reason, we stick to case of linear hypothesis functions in order to work out the key ideas.
	
	On the other hand, it is well known that the use of \emph{non-convex} hypothesis classes can substantially increase the learning capacity of empirical risk minimization, e.g., in \emph{deep learning} \cite{lecun2015deep,goodfellow2016deep}. But this gain may come along with very challenging non-convex optimization tasks.
	An interesting approach related to single-index models (cf. Assumption~\ref{model:appl:nonlinear:sim}) is taken by Yang et al.\ in \cite{yang2015sparse}, where $\funcclass$ basically corresponds to a subset of $\{ \x \mapsto \fobs(\sp{\x}{\xpv}) \suchthat \xpv \in \sset \}$.
	In that way, they obtain a consistent estimator even in the sub-Gaussian case but it only applies under very restrictive assumption on $\fobs \colon \R \to \R$. This drawback is mainly due to the fact that the landscape of the empirical risk in \eqref{eq:intro:emplossmin-general} becomes very complicated for non-linear output functions (see also \cite{mei2016landscape}).
	However, these results give evidence that the mismatch principle could be extended to non-linear hypothesis functions.
\item
	\emph{Consistent estimators.} 
	The analysis of Subsection~\ref{subsec:appl:nonlinear} reveals that \eqref{eq:results:bounds:klasso} is typically inconsistent if the input data is non-Gaussian.
	Interestingly, one can resolve this issue by introducing an adaptive estimator, such as in \cite{yang2017stein,yang2017nips,yang2017high}. 
	But we rather aim at a generic strategy that does not leverage explicit knowledge of the probability distribution of $\x$.
	The observation of Remark~\ref{rmk:appl:nonlinear:mim:vs-sim} could be useful at this point, as it indicates that \eqref{eq:results:bounds:klasso} successfully detects the active variables (or the support of an index vector) under a weaker sub-Gaussian assumption.
\item
	\emph{Structured input data.} 
	As already pointed out in Subsection~\ref{subsec:literature:cs}, many problems in signal processing require to deal with structured data vectors, including \emph{non-i.i.d.}\ samples $\{\x_i\}_{i = 1}^n$ and \emph{heavy-tailed} feature variables.
	While the latter concern may be addressed by Mendelson's small ball method \cite{mendelson2014learning,mendelson2014learninggeneral}, the assumption of independent sampling is still crucial to many results in empirical process theory. 
	It is especially not clear how to appropriately extend the statistical tools from Subsection~\ref{subsec:proofs:local}, which form the technical basis of our error bounds.
	Nevertheless, some promising progress has been recently made with the works of \cite{dirksen2017onebitcirc,dirksen2018robust,dirksen2018ditherstruct} in the area of $1$-bit compressed sensing, showing first theoretical guarantees for subsampled Gaussian circulant matrices and heavy-tailed measurements.
\item
	\emph{Loss functions.} A natural variation of \eqref{eq:results:bounds:klasso} is to consider a different convex \emph{loss function} $\loss\colon \R \times \R \to \R$:
	\begin{equation}\label{eq:conclusion:klassoloss}
		\min_{\xpv \in \sset} \ \tfrac{1}{n} \sum_{i = 1}^n \loss(y_i,\sp{\x_i}{\xpv}).
	\end{equation}
	Using the concept of \emph{restricted strong convexity} according to \cite{genzel2016estimation}, one can establish similar error bounds for the estimator \eqref{eq:conclusion:klassoloss} as in Section~\ref{sec:results}.
	However, this asks for a careful adaption of the mismatch covariance (cf. Definition~\ref{def:results:parameters:mismatch}), which is based on a tighter estimate in \eqref{eq:proofs:local:bndmismatchcovar}.
	In that way, the adapted mismatch covariance can even take negative values, thereby improving the estimation performance in certain situations.
	Although not stated explicitly, this phenomenon is exploited in \cite{genzel2018hinge}, where \eqref{eq:conclusion:klassoloss} is studied for the so-called \emph{hinge loss function} in the context of $1$-bit compressed sensing.
	This problem setting comes along with a series of technical challenges, since the hinge loss is not coercive and the expected risk minimizer is not necessarily uniquely defined (e.g., if $y = \sign(\sp{\lat}{\trulatpv})$).
	Therefore, the geometry of the hypothesis set $\sset$ plays a much larger role in \cite{genzel2018hinge}, and in fact, the simplification of Remark~\ref{rmk:results:bounds}\ref{rmk:results:bounds:mismatchpractical} is not applicable anymore. 
	This particularly shows that extending the mismatch principle with regard to \eqref{eq:conclusion:klassoloss} is not straightforward in general.
\item
	\emph{Regularized estimators.} From an optimization perspective, it is often beneficial to solve the \emph{regularized} analog of \eqref{eq:results:bounds:klasso}, that is
	\begin{equation}\label{eq:conclusion:klassoreg}
		\min_{\xpv \in \R^\p} \ \tfrac{1}{n} \sum_{i = 1}^n (y_i - \sp{\x_i}{\xpv})^2 + \lambda \norm{\xpv}_{\sset}
	\end{equation}
	where $\lambda > 0$ is a (tunable) regularization parameter and $\norm{\cdot}_{\sset}$ the Minkowski functional associated with $\sset$.
	Even though \eqref{eq:conclusion:klassoreg} is conceptually strongly related to \eqref{eq:results:bounds:klasso}, it is far from obvious how one can adapt our results to these types of estimators. The interested reader is referred to \cite{lecue2016regularization,lecue2017regularization} for recent
advances in that direction.
\end{listing}

\section{Proofs}
\label{sec:proofs}

The proofs of Theorem~\ref{thm:results:bounds:global} and Theorem~\ref{thm:results:bounds:conic} in Subsection~\ref{subsec:proofs:results} are consequences of a more general error bound (cf. Theorem~\ref{thm:proofs:results:local}) which is based on the so-called local mean width as refined complexity parameter. 
Theorem~\ref{thm:proofs:results:local} is then proven in Subsection~\ref{subsec:proofs:local}. This part is in fact the heart of our analysis and relies on two very recent concentration bounds for empirical stochastic processes (Theorem~\ref{thm:proofs:local:multiplproc} and Theorem~\ref{thm:proofs:local:quadrproc}), allowing for \emph{uniformly} controlling the linear and quadratic term of the excess risk functional.

\subsection{Proofs of the Main Results (Theorem~\ref{thm:results:bounds:global} and Theorem~\ref{thm:results:bounds:conic})}
\label{subsec:proofs:results}

Let us begin with defining the local mean width, which, in a certain sense, generalizes the notion of conic mean width from Definition~\ref{def:results:parameters:meanwidth}:
\begin{definition}[Local mean width]\label{def:proofs:results:localmw}
	Let $\ssetalt \subset \R^\d$ be a subset.
	The \emph{local mean width} of $\ssetalt$ at scale $t > 0$ is defined as
	\begin{equation}
		\meanwidth[t]{\ssetalt} \coloneqq \meanwidth{\tfrac{1}{t}\ssetalt \intersec \S^{\d-1}} = \tfrac{1}{t} \meanwidth{\ssetalt \intersec t\S^{\d-1}}.
	\end{equation}
\end{definition}
The purpose of this geometric parameter is to measure the complexity of a set $\ssetalt$ in a ``local'' neighborhood of $\vnull$, whose size is determined by the scale $t$.
Thus, intuitively speaking, $\meanwidth[t]{\ssetalt}$ captures certain features of $\ssetalt$ at different resolution levels as $t$ varies.
The following lemma relates the local mean width to its conic counterpart, showing that the latter essentially corresponds to the limit case $t \to 0$:
\begin{lemma}\label{lem:proofs:results:mwlocalconic}
Let $\ssetalt \subset \R^\d$ be convex and $\vnull \in \ssetalt$. Then, the mapping $t \mapsto \meanwidth[t]{\ssetalt}$ is non-increasing, and it holds that $\meanwidth[t]{\ssetalt} \to \meanwidth[\conic]{\ssetalt}$ as $t \to 0$. In particular, for all $t > 0$, we have that $\meanwidth[t]{\ssetalt} \leq \meanwidth[\conic]{\ssetalt}$ and $\meanwidth[t]{\ssetalt} \leq \tfrac{1}{t} \meanwidth{\ssetalt}$.
\end{lemma}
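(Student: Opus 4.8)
The plan is to argue directly from the representation
$\meanwidth[t]{\ssetalt} = \mean\big[\sup_{\h \in A_t}\sp{\gaussian}{\h}\big]$, where
$A_t \coloneqq \tfrac{1}{t}\ssetalt \intersec \S^{\d-1} = \{\h \in \S^{\d-1} \suchthat t\h \in \ssetalt\}$
and $\gaussian \distributed \Normdistr{\vnull}{\I{\d}}$, and to reduce everything to one elementary observation about these sets. Namely: since $\ssetalt$ is convex with $\vnull \in \ssetalt$, whenever $t\h \in \ssetalt$ and $0 < s \leq t$ we also have $s\h = \tfrac{s}{t}(t\h) + (1-\tfrac{s}{t})\vnull \in \ssetalt$; hence $A_t \subseteq A_s$ for $0 < s \leq t$, i.e.\ the family $(A_t)_{t>0}$ is non-increasing in $t$. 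Taking the supremum of $\h \mapsto \sp{\gaussian}{\h}$ over the larger set and then the expectation immediately yields that $t \mapsto \meanwidth[t]{\ssetalt}$ is non-increasing, which is the first claim. (Here we use the harmless convention $\sup\emptyset = -\infty$ to cover degenerate cases such as $\ssetalt = \{\vnull\}$, where the statement holds trivially.)

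Next I would treat the limit $t \to 0$. First identify $\cone{\ssetalt}\intersec\S^{\d-1}$ with $A_0 \coloneqq \{\h \in \S^{\d-1} \suchthat \exists\, s > 0 \colon s\h \in \ssetalt\}$: a unit vector of the form $\h = \tau\v$ with $\v \in \ssetalt$ and $\tau \geq 0$ forces $\tau > 0$ and then $\tfrac{1}{\tau}\h \in \ssetalt$, and conversely any $\h$ with $s\h \in \ssetalt$ ($s>0$) lies in $\cone{\ssetalt}$. Combining this with the nesting above gives $\bigcup_{t>0}A_t = A_0$, and since the $A_t$ increase as $t \downarrow 0$, we get $A_t \uparrow A_0$. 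Consequently, for every fixed realisation of $\gaussian$, the quantity $\sup_{\h\in A_t}\sp{\gaussian}{\h}$ is non-decreasing as $t\downarrow 0$ and converges to $\sup_{\h\in A_0}\sp{\gaussian}{\h}$: it is $\leq$ the latter because $A_t \subseteq A_0$, and $\geq$ because each $\h \in A_0$ eventually lies in some $A_t$, hence in all smaller ones. Assuming $A_0 \neq \emptyset$ (else the statement is vacuous), $\abs{\sup_{\h\in A_t}\sp{\gaussian}{\h}} \leq \lnorm{\gaussian}$ for all sufficiently small $t$ by Cauchy--Schwarz, and $\lnorm{\gaussian}$ is integrable, so dominated convergence gives $\meanwidth[t]{\ssetalt}\to\mean\big[\sup_{\h\in A_0}\sp{\gaussian}{\h}\big] = \meanwidth{\cone{\ssetalt}\intersec\S^{\d-1}} = \meanwidth[\conic]{\ssetalt}$, which is the second claim.

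The two ``in particular'' inequalities are then corollaries. For $\meanwidth[t]{\ssetalt}\leq\meanwidth[\conic]{\ssetalt}$ one may either combine the monotonicity with the limit just established ($\meanwidth[t]{\ssetalt}\leq\lim_{s\downarrow 0}\meanwidth[s]{\ssetalt}$), or simply note directly that $A_t\subseteq A_0$. For $\meanwidth[t]{\ssetalt}\leq\tfrac{1}{t}\meanwidth{\ssetalt}$ I would use the alternative form $\meanwidth[t]{\ssetalt} = \tfrac{1}{t}\meanwidth{\ssetalt\intersec t\S^{\d-1}}$ recorded in Definition~\ref{def:proofs:results:localmw}, together with the monotonicity of the Gaussian mean width under set inclusion (if $A\subseteq B$ then $\sup_{\h\in A}\sp{\gaussian}{\h}\leq\sup_{\h\in B}\sp{\gaussian}{\h}$, so $\meanwidth{A}\leq\meanwidth{B}$), applied to $\ssetalt\intersec t\S^{\d-1}\subseteq\ssetalt$.

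The argument is essentially routine; the only place calling for care is the passage $t\to 0$, where one must (i) correctly match the increasing union $\bigcup_{t>0}A_t$ with $\cone{\ssetalt}\intersec\S^{\d-1}$ --- this is exactly where convexity and $\vnull\in\ssetalt$ are used --- while keeping in mind degenerate configurations (e.g.\ $\ssetalt = \{\vnull\}$, or $\ssetalt$ a ball centred at $\vnull$, where $\meanwidth[t]{\ssetalt} = -\infty$ for large $t$), and (ii) justify the exchange of limit and expectation, for which the uniform bound $\lnorm{\gaussian}$ is the natural dominating function since all competitors $\h$ lie on the unit sphere.
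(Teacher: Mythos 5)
Your proof is correct and follows essentially the same route as the paper, whose own proof is a one-liner invoking the inclusion $\tfrac{1}{t}\ssetalt \subset \cone{\ssetalt}$ and monotonicity of the mean width under set inclusion. You simply spell out the details the paper omits — in particular the identification $\bigcup_{t>0}(\tfrac{1}{t}\ssetalt\intersec\S^{\d-1}) = \cone{\ssetalt}\intersec\S^{\d-1}$ and the dominated-convergence step for the limit $t\to 0$ — and these are carried out correctly.
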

\begin{proof}
	This follows from the definition and the basic properties of the (global) mean width as well as the inclusion $\tfrac{1}{t}\ssetalt \subset \cone{\ssetalt}$ for all $t > 0$.
\end{proof}
With regard to the problem setting of this work, the most important conclusion from Lemma~\ref{lem:proofs:results:mwlocalconic} is that the local complexity of a set may be significantly smaller if the considered neighborhood is small but not infinitesimal (cf. \cite[Sec.~2.1]{genzel2018hinge}).
Indeed, as we will see next, this relaxation greatly pays off when investigating the performance of the estimator \eqref{eq:results:bounds:klasso}.
The following result basically shows that, by accepting an approximation error of order $t$, the required sample size is determined by the value of $\meanwidth[t]{\atoms^\T\sset - \targetlatpv}$, rather than $\meanwidth[\conic]{\atoms^\T\sset - \targetlatpv}$:
\begin{theorem}[Estimation via \eqref{eq:results:bounds:klasso} -- Local version]\label{thm:proofs:results:local}
	Let Assumption~\ref{model:results:observations} be satisfied.
	Let $\sset \subset \R^\p$ be a convex subset and fix a vector $\targetlatpv \in \atoms^\T\sset \subset \R^\d$.
	Then there exists numerical constants $C_1, C_2, C_3 > 0$ such that for every $u > 0$ and $t > 0$, the following holds true with probability at least $1 - 2\exp(- C_1 \cdot u^2) - 2\exp(- C_1 \cdot n) - \exp(- C_1 \cdot \subgparam^{-4} \cdot n)$:
	If the number of observed samples obeys
	\begin{equation}\label{eq:proofs:results:local:meas}
		n \geq C_2 \cdot \subgparam^4 \cdot \effdim[t]{\atoms^\T\sset - \targetlatpv},
	\end{equation}
	and
	\begin{equation}\label{eq:proofs:results:local:implbound}
		t > C_3 \cdot \Big( \subgparam \cdot \modeldev{\targetlatpv} \cdot \frac{\meanwidth[t]{\atoms^\T\sset - \targetlatpv} + u}{\sqrt{n}} + \modelcovar{\targetlatpv} \Big),
	\end{equation}
	then every minimizer $\solu\xpv$ of \eqref{eq:results:bounds:klasso} satisfies $\lnorm{\atoms^\T\solu\xpv - \targetlatpv} < t$.
\end{theorem}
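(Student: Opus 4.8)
The plan is to reduce to isotropic data and argue by contradiction. By the transformation identity \eqref{eq:results:bounds:transformklasso}, $\atoms^\T\solu\xpv$ minimizes $\latpv\mapsto\bar{\mathcal L}_n(\latpv)\coloneqq\tfrac1n\sum_{i=1}^n(y_i-\sp{\lat_i}{\latpv})^2$ over the convex set $\bar{\sset}\coloneqq\atoms^\T\sset\ni\targetlatpv$. Put $\h\coloneqq\atoms^\T\solu\xpv-\targetlatpv$ and suppose, towards a contradiction, that $\lnorm{\h}\geq t$. By convexity of $\bar{\mathcal L}_n$, for every $\lambda\in[0,1]$ we have $\bar{\mathcal L}_n(\targetlatpv+\lambda\h)\leq(1-\lambda)\bar{\mathcal L}_n(\targetlatpv)+\lambda\,\bar{\mathcal L}_n(\atoms^\T\solu\xpv)\leq\bar{\mathcal L}_n(\targetlatpv)$, the last step because $\atoms^\T\solu\xpv$ is the constrained minimizer and $\targetlatpv\in\bar{\sset}$. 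Choosing $\lambda=t/\lnorm{\h}\in(0,1]$ yields $\h_t\coloneqq\lambda\h$ with $\lnorm{\h_t}=t$, $\h_t\in\bar{\sset}-\targetlatpv$, and $\bar{\mathcal L}_n(\targetlatpv+\h_t)\leq\bar{\mathcal L}_n(\targetlatpv)$; expanding the squares and cancelling, this inequality is equivalent to
\[
\tfrac1n\,\textstyle\sum_{i=1}^n\sp{\lat_i}{\h_t}^2\;\leq\;\tfrac2n\,\textstyle\sum_{i=1}^n\bigl(y_i-\sp{\lat_i}{\targetlatpv}\bigr)\sp{\lat_i}{\h_t}.
\]

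The next step is to bound both sides uniformly over the localization set $V\coloneqq(\bar{\sset}-\targetlatpv)\cap t\S^{\d-1}$, which by Definition~\ref{def:proofs:results:localmw} has radius $t$ and Gaussian mean width $\meanwidth{V}=t\cdot\meanwidth[t]{\bar{\sset}-\targetlatpv}$. For the left-hand side I would invoke the quadratic-process concentration bound (Theorem~\ref{thm:proofs:local:quadrproc}) together with the isotropy identity $\mean[\sp{\lat}{v}^2]=\lnorm{v}^2=t^2$ for $v\in V$: on an event of the probability stated in the theorem, $\inf_{v\in V}\tfrac1n\sum_{i=1}^n\sp{\lat_i}{v}^2\geq\tfrac12 t^2$ as soon as the correction terms $\subgparam^2\meanwidth{V}^2/n$ and $\subgparam^2\meanwidth{V}\,t/\sqrt n$ are small enough, which is guaranteed precisely by the sample-size hypothesis \eqref{eq:proofs:results:local:meas}, i.e.\ $n\gtrsim\subgparam^4\effdim[t]{\bar{\sset}-\targetlatpv}$. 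For the right-hand side I would split $\tfrac1n\sum_{i=1}^n(y_i-\sp{\lat_i}{\targetlatpv})\lat_i=\mean[(y-\sp{\lat}{\targetlatpv})\lat]+R_n$ with $R_n$ the centered multiplier sum; by Cauchy--Schwarz and Definition~\ref{def:results:parameters:mismatch} the deterministic part contributes at most $\modelcovar{\targetlatpv}\lnorm{\h_t}=\modelcovar{\targetlatpv}\,t$ to $\sp{\cdot}{\h_t}$, while $R_n$ is controlled, via Theorem~\ref{thm:proofs:local:multiplproc} applied with multiplier norm $\normsubg{y-\sp{\lat}{\targetlatpv}}=\modeldev{\targetlatpv}$ over the index set $V$, by $\sup_{v\in V}\sp{R_n}{v}\lesssim\subgparam\cdot\modeldev{\targetlatpv}\cdot t\cdot\tfrac{\meanwidth[t]{\bar{\sset}-\targetlatpv}+u}{\sqrt n}$ on a further event of the stated probability.

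On the intersection of these events — whose complement has probability bounded by the sum of exponentials in the statement — combining the two displays above gives $\tfrac12 t^2\lesssim t\bigl(\modelcovar{\targetlatpv}+\subgparam\cdot\modeldev{\targetlatpv}\cdot(\meanwidth[t]{\bar{\sset}-\targetlatpv}+u)/\sqrt n\bigr)$, hence $t\lesssim\modelcovar{\targetlatpv}+\subgparam\cdot\modeldev{\targetlatpv}\cdot(\meanwidth[t]{\bar{\sset}-\targetlatpv}+u)/\sqrt n$ with an explicit numerical constant; taking $C_3$ larger than that constant contradicts \eqref{eq:proofs:results:local:implbound}, so $\lnorm{\atoms^\T\solu\xpv-\targetlatpv}=\lnorm{\h}<t$. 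The main obstacle is entirely the bookkeeping in the middle step: one must verify that $V$ enters Theorems~\ref{thm:proofs:local:multiplproc} and~\ref{thm:proofs:local:quadrproc} with radius $t$ and mean width $t\,\meanwidth[t]{\bar{\sset}-\targetlatpv}$, track the sub-Gaussian constant so that the sample-size threshold comes out as $\subgparam^4\effdim[t]{\bar{\sset}-\targetlatpv}$ rather than a worse power of $\subgparam$, and assemble the three failure probabilities into exactly the form $2\exp(-C_1u^2)+2\exp(-C_1n)+\exp(-C_1\subgparam^{-4}n)$. The convexity/segment reduction and the algebraic expansion of the excess empirical loss are routine; all the substance sits in invoking the empirical-process machinery of \cite{mendelson2016multiplier,liaw2016randommat} with the right parameters.
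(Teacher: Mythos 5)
Your proposal is correct and follows essentially the same route as the paper: the same decomposition of the empirical excess loss into a quadratic and a multiplier term, the same localization to $(\atoms^\T\sset - \targetlatpv) \intersec t\S^{\d-1}$, the same two concentration results (Theorem~\ref{thm:proofs:local:quadrproc} for the quadratic part, Theorem~\ref{thm:proofs:local:multiplproc} plus Cauchy--Schwarz for the centered multiplier part and the bias $\modelcovar{\targetlatpv}$), and the same convexity argument — you merely rescale the putative minimizer onto the sphere at the outset, whereas the paper establishes positivity of the excess risk on the sphere first and then applies the line-segment argument.
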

Compared to our main results from Section~\ref{sec:results}, the above guarantee is quite implicit, as both sides of the condition \eqref{eq:proofs:results:local:implbound} depend on $t$.
A convenient way to read Theorem~\ref{thm:proofs:results:local} is therefore as follows: First, fix an estimation accuracy $t$ that one is willing to tolerate. Then adjust the sample size $n$ and the target vector $\targetlatpv$ such that \eqref{eq:proofs:results:local:implbound} is fulfilled (if possible at all).
In particular, by allowing for a larger approximation error, the conditions of \eqref{eq:proofs:results:local:meas} and \eqref{eq:proofs:results:local:implbound} become weaker according to Lemma~\ref{lem:proofs:results:mwlocalconic}.
Consequently, the assertion of Theorem~\ref{thm:proofs:results:local} enables a compromise between the desired accuracy and the budget of available samples, which is controlled by means of the local mean width.
The interested reader is referred to \cite[Sec.~III.D]{genzel2016estimation} for a more geometric interpretation of this important trade-off.

The remainder of this part is devoted to an application of Theorem~\ref{thm:proofs:results:local} in order to derive the statements of Theorem~\ref{thm:results:bounds:global} and Theorem~\ref{thm:results:bounds:conic}.
The basic idea is to set $t$ equal to the right-hand side of the (simplified) error bounds in \eqref{eq:results:bounds:global:bound-decay} and \eqref{eq:results:bounds:conic:bound-decay}, respectively, and then to verify that the assumptions \eqref{eq:proofs:results:local:meas} and \eqref{eq:proofs:results:local:implbound} hold true in either case.

Let us start with proving Theorem~\ref{thm:results:bounds:conic}, which in fact requires less technical effort than the proof of Theorem~\ref{thm:results:bounds:global} below.
\begin{proof}[Proof of Theorem~\ref{thm:results:bounds:conic}]
	Following the above roadmap, we apply Theorem~\ref{thm:proofs:results:local} with
	\begin{equation}
		t = 2C_3 \cdot \Big( \subgparam \cdot \modeldev{\targetlatpv} \cdot \frac{\meanwidth[\conic]{\atoms^\T\sset - \targetlatpv} + u}{\sqrt{n}} + \modelcovar{\targetlatpv} \Big)
	\end{equation}
	where $u > 0$ is specified later on.
	Let us first assume that $t > 0$. By Lemma~\ref{lem:proofs:results:mwlocalconic}, we have that
	\begin{align}
		t &> C_3 \cdot \Big( \subgparam \cdot \modeldev{\targetlatpv} \cdot \frac{\meanwidth[\conic]{\atoms^\T\sset - \targetlatpv} + u}{\sqrt{n}} + \modelcovar{\targetlatpv} \Big) \\*
		&\geq C_3 \cdot \Big( \subgparam \cdot \modeldev{\targetlatpv} \cdot \frac{\meanwidth[t]{\atoms^\T\sset - \targetlatpv} + u}{\sqrt{n}} + \modelcovar{\targetlatpv} \Big),
	\end{align}
	implying that \eqref{eq:proofs:results:local:implbound} is satisfied. Moreover, the condition \eqref{eq:proofs:results:local:meas} follows from \eqref{eq:results:bounds:conic:meas}:
	\begin{equation}
		n \gtrsim \subgparam^4 \cdot \underbrace{\delta^{-2}}_{\geq 1} \cdot \effdim[\conic]{\atoms^\T\sset - \targetlatpv} \geq \subgparam^4 \cdot \effdim[t]{\atoms^\T\sset - \targetlatpv}.
	\end{equation}
	Consequently, Theorem~\ref{thm:proofs:results:local} states that, with probability at least $1 - 2\exp(- C_1 \cdot u^2) - 2\exp(- C_1 \cdot n) - \exp(- C_1 \cdot \subgparam^{-4} \cdot n)$, the following error bound holds true:
	\begin{equation}
		\lnorm{\atoms^\T\solu\xpv - \targetlatpv} < t = 2C_3 \cdot \Big( \subgparam \cdot \modeldev{\targetlatpv} \cdot \frac{\meanwidth[t]{\atoms^\T\sset - \targetlatpv} + u}{\sqrt{n}} + \modelcovar{\targetlatpv} \Big).
	\end{equation}
	To obtain \eqref{eq:results:bounds:conic:bound}, we observe that
	\begin{equation}
		\subgparam \cdot \frac{\meanwidth[\conic]{\atoms^\T\sset - \targetlatpv}}{\sqrt{n}} \stackrel{\eqref{eq:results:bounds:conic:meas}}{\lesssim} \subgparam^{-1} \cdot \delta,
	\end{equation}
	and set $u = \subgparam^{-2} \cdot \delta \cdot \sqrt{n}$. Note that the desired probability of success $1 - 5\exp(- C \cdot \subgparam^{-4} \cdot \delta^2 \cdot n)$ is achieved by adjusting the constant $C$ and using that $\delta \leq 1$ and $\subgparam \gtrsim 1$, where the latter is due to
	\begin{equation}\label{eq:proofs:results:conic:subgparam}
		\subgparam \geq \normsubg{\lat} = \sup_{\latpv \in \S^{\d-1}} \normsubg{\sp{\lat}{\latpv}} \stackrel{\eqref{eq:intro:subgnorm}}{\geq} \sup_{\latpv \in \S^{\d-1}} 2^{-1/2} \underbrace{\mean[\abs{\sp{\lat}{\latpv}}^2]^{1/2}}_{\stackrel{\eqref{eq:intro:isotr}}{=} 1} = 2^{-1/2}.
	\end{equation}
	
	Finally, let us consider the case of $t = 0$ (i.e., exact recovery), which is equivalent to $\modeldev{\targetlatpv} = \modelcovar{\targetlatpv} = 0$.
	While we cannot apply Theorem~\ref{thm:proofs:results:local} directly in this situation, the proof steps from Subsection~\ref{subsec:proofs:local} can be still easily adapted:
	If $0 = \modeldev{\targetlatpv} = \normsubg{y - \sp{\lat}{\targetlatpv}}$, the observation model is actually linear and noiseless: $y = \sp{\lat}{\targetlatpv} = \sp{\x}{\target\xpv}$. This immediately implies that the multiplier term $\multiplterm{\cdot, \targetlatpv}$ vanishes, so that $\exloss(\xpv, \target\xpv) = \quadrterm{\atoms^\T\xpv, \targetlatpv}$ for all $\xpv \in \R^\p$. 
	Repeating the argument of Step 3 in the proof of Theorem~\ref{thm:proofs:results:local} with $t = 1$ and $\ssetalt = \cone{\atoms^\T \sset - \targetlatpv} \intersec \S^{\d-1}$, we conclude that, with probability at least $1 - \exp(- C_1 \cdot \subgparam^{-4} \cdot n)$, it holds that
	\begin{equation}\label{eq:proofs:results:conic:noiseless}
		\tfrac{1}{n} \sum_{i = 1}^n \abs[\big]{\sp{\lat_i}{\h}}^2 > 0 
	\end{equation}
	for all $\h \in \cone{\atoms^\T \sset - \targetlatpv} \intersec \S^{\d-1}$.
	Now, if $\xpv' \in \sset_{\targetlatpv,>0} = \{\xpv \in \sset \suchthat \atoms^\T\xpv \neq \targetlatpv \}$, we have $\tfrac{\atoms^\T\xpv' - \targetlatpv}{\lnorm{\atoms^\T\xpv' - \targetlatpv}} \in \cone{\atoms^\T \sset - \targetlatpv} \intersec \S^{\d-1}$ by the convexity of $\atoms^\T \sset - \targetlatpv$. Therefore, on the event of \eqref{eq:proofs:results:conic:noiseless}, it follows that \enlargethispage{\baselineskip}
	\begin{align}
		\exloss(\xpv', \target\xpv) = \quadrterm{\atoms^\T\xpv', \targetlatpv} = \lnorm{\atoms^\T\xpv' - \targetlatpv}^2 \cdot \tfrac{1}{n} \sum_{i = 1}^n \abs[\big]{\sp{\lat_i}{\tfrac{\atoms^\T\xpv' - \targetlatpv}{\lnorm{\atoms^\T\xpv' - \targetlatpv}}}}^2 > 0.
	\end{align}
	Hence, $\xpv'$ cannot be a minimizer of \eqref{eq:results:bounds:klasso}, so that every minimizer $\solu\xpv$ belongs to $\sset_{\targetlatpv,0}$, i.e., $\atoms^\T\solu\xpv = \targetlatpv$.	
\end{proof}

The proof of Theorem~\ref{thm:results:bounds:global} is slightly more involved. We loosely follow the argumentation from \cite[Thm.~3.6]{genzel2017msense}.
\begin{proof}[Proof of Theorem~\ref{thm:results:bounds:global}]
	We apply Theorem~\ref{thm:proofs:results:local} with
	\begin{equation}
		t = D \cdot \Big[ \subgparam \cdot \Big(\frac{\meanwidth{\atoms^\T\sset}}{\sqrt{n}}\Big)^{1/2} + \frac{u}{\sqrt{n}}\Big] + D' \cdot \modelcovar{\targetlatpv} \label{eq:proofs:results:global:t}
	\end{equation}
	where the values of $D, D' \gtrsim 1$ are specified later on.
	Let us first establish an upper bound on the local mean width:
	\begin{align}
		\effdim[t]{\atoms^\T\sset-\targetlatpv} &\leq \tfrac{1}{t^2}\effdim{\atoms^\T\sset-\targetlatpv} = \tfrac{1}{t^2} \effdim{\atoms^\T\sset} \\*
		&\stackrel{\eqref{eq:proofs:results:global:t}}{\leq} \frac{1}{D^2 \cdot \subgparam^2} \cdot \frac{\sqrt{n}}{\meanwidth{\atoms^\T\sset}} \cdot \effdim{\atoms^\T\sset} = \frac{1}{D^2 \cdot \subgparam^2} \cdot \sqrt{n} \cdot \meanwidth{\atoms^\T\sset}, \label{eq:proofs:results:global:estmeas}
	\end{align}
	where we have also used Lemma~\ref{lem:proofs:results:mwlocalconic} and the translation invariance of the mean width (cf. \cite[Prop.~2.1]{plan2013robust}).
	Next, we adjust $D$ such that $D \gtrsim \max\{1, \subgparam \cdot \modeldev{\targetlatpv} \}$ and use \eqref{eq:proofs:results:global:estmeas} to obtain the following bound on the right-hand side of \eqref{eq:proofs:results:local:implbound}:
	\begin{align}
		& C_3 \cdot \Big( \underbrace{ \subgparam \cdot \modeldev{\targetlatpv}}_{\lesssim D \lesssim D^2} \cdot \frac{\meanwidth[t]{\atoms^\T\sset - \targetlatpv} + u}{\sqrt{n}} + \modelcovar{\targetlatpv}\Big) \\*
		\lesssim{} & D^2 \cdot \frac{\meanwidth[t]{\atoms^\T\sset - \targetlatpv}}{\sqrt{n}} + D \cdot \frac{u}{\sqrt{n}} + D' \cdot \modelcovar{\targetlatpv} \\
		\stackrel{\eqref{eq:proofs:results:global:estmeas}}{\leq} {} & D^2 \cdot \frac{1}{D \cdot \subgparam} \cdot \frac{\sqrt{\meanwidth{\atoms^\T\sset}} \cdot n^{1/4}}{\sqrt{n}} + D \cdot \frac{u}{\sqrt{n}} + D' \cdot \modelcovar{\targetlatpv} \\
		={} & D \cdot \underbrace{\subgparam^{-1}}_{\stackrel{\eqref{eq:proofs:results:conic:subgparam}}{\leq} 2 \subgparam} \cdot \Big(\frac{\meanwidth{\atoms^\T\sset}}{\sqrt{n}}\Big)^{1/2} + D \cdot \frac{u}{\sqrt{n}} + D' \cdot \modelcovar{\targetlatpv} \\
		\lesssim {} & D \cdot \Big[ \subgparam \cdot \Big(\frac{\meanwidth{\atoms^\T\sset}}{\sqrt{n}}\Big)^{1/2} + \frac{u}{\sqrt{n}}\Big] + D' \cdot \modelcovar{\targetlatpv}  = t. \label{eq:proofs:results:global:estimplbound}
	\end{align}
	Consequently, if $D' \gtrsim 1$ is large enough and $D = \tilde{C} \cdot \max\{1, \subgparam \cdot \modeldev{\targetlatpv} \}$, with $\tilde{C} > 0$ being an appropriate numerical constant, we conclude that \eqref{eq:proofs:results:global:estimplbound} implies the condition \eqref{eq:proofs:results:local:implbound} of Theorem~\ref{thm:proofs:results:local}. Moreover, combining \eqref{eq:proofs:results:global:estmeas} and \eqref{eq:results:bounds:global:meas}, we obtain	\begin{equation}
		\effdim[t]{\atoms^\T\sset-\targetlatpv} \leq \frac{1}{D^2 \cdot \subgparam^2} \cdot \sqrt{n} \cdot \meanwidth{\atoms^\T\sset} \stackrel{\eqref{eq:results:bounds:global:meas}}{\lesssim} \frac{1}{D^2 \cdot \subgparam^4} \cdot \underbrace{\delta^{2}}_{\leq 1} {}\cdot{} n \leq \frac{1}{\tilde{C}^2 \cdot \subgparam^4} \cdot n, 
	\end{equation}
	which implies \eqref{eq:proofs:results:local:meas}, where $\tilde{C}$ might have to be slightly enlarged again.
	
	Consequently, Theorem~\ref{thm:proofs:results:local} with $u = \delta \cdot \sqrt{n}$ states that, with probability at least $1 - 2 \exp(-C_1 \cdot \delta^2 \cdot n) - 2 \exp(-C_1 \cdot n) - \exp(-C_1 \cdot \subgparam^{-4} \cdot n)$, every minimizer $\solu\xpv$ of \eqref{eq:results:bounds:klasso} satisfies the error bound
	\begin{align} 
		\lnorm{\atoms^\T\solu\xpv - \targetlatpv} < t &\lesssim \max\{1, \subgparam \cdot \modeldev{\targetlatpv} \} \cdot \Big[ \underbrace{\subgparam \cdot \Big(\frac{\meanwidth{\atoms^\T\sset}}{\sqrt{n}}\Big)^{1/2}}_{\stackrel{\eqref{eq:results:bounds:global:meas}}{\lesssim} \delta} + \frac{u}{\sqrt{n}}\Big] + \modelcovar{\targetlatpv}  \\*
		&\lesssim \max\{1, \subgparam \cdot \modeldev{\targetlatpv} \} \cdot \delta + \modelcovar{\targetlatpv}.
	\end{align}
	
	Finally, since $\delta \leq 1$ and $\subgparam \gtrsim 1$ by \eqref{eq:proofs:results:conic:subgparam}, we achieve the desired probability of success $1 - 5\exp(- C \cdot \subgparam^{-4} \cdot \delta^2 \cdot n)$ by adjusting $C$.
\end{proof}

\subsection{Proof of Theorem~\ref{thm:proofs:results:local}}
\label{subsec:proofs:local}

Throughout this subsection, we assume that Assumption~\ref{model:results:observations} is satisfied, and recall the notations from Table~\ref{tab:results:terminology}.
For the sake of readability, let us also introduce some additional notation:
The objective function of \eqref{eq:results:bounds:klasso} --- typically referred to as the \emph{empirical risk} --- is denoted by
\begin{equation}
	\lossemp{}(\xpv) \coloneqq \tfrac{1}{n} \sum_{i = 1}^n (y_i - \sp{\x_i}{\xpv})^2, \quad \xpv \in \R^\p,
\end{equation}
and the associated \emph{excess risk} by
\begin{equation}
	\exloss(\xpv, \xpv') \coloneqq \lossemp{}(\xpv) - \lossemp{}(\xpv'), \quad \xpv, \xpv' \in \R^\p.
\end{equation}
Moreover, we define the following subsets of $\sset$:
\begin{align}
	\sset_{\targetlatpv,t} &\coloneqq \{\xpv \in \sset \suchthat \lnorm{\atoms^\T\xpv - \targetlatpv} = t \}, \\*
	\sset_{\targetlatpv,>t} &\coloneqq \{\xpv \in \sset \suchthat \lnorm{\atoms^\T\xpv - \targetlatpv} > t \}, \\*
	\sset_{\targetlatpv,<t} &\coloneqq \{\xpv \in \sset \suchthat \lnorm{\atoms^\T\xpv - \targetlatpv} < t \}.
\end{align}
Finally, by the assumption $\targetlatpv \in \atoms^\T \sset$ in Theorem~\ref{thm:proofs:results:local}, there exists $\target\xpv \in \sset$ such that $\targetlatpv = \atoms^\T \target\xpv$. Note that this parameter vector could be highly non-unique, but the arguments below apply to every choice of $\target\xpv$ with $\targetlatpv = \atoms^\T \target\xpv$.

\begin{figure}
	\centering
	\tikzstyle{blackdot}=[shape=circle,fill=black,minimum size=1mm,inner sep=0pt,outer sep=0pt]
		\begin{tikzpicture}[scale=2]
			\coordinate (K1) at (-.5,-.3);
			\coordinate (K2) at (.5,-1);
			\coordinate (K3) at (.3,-1.7);
			\coordinate (K4) at (-.2,-2.1);
			\coordinate (K5) at (-1,-1.2);
			\coordinate[below right=.35cm and .15 of K1] (muX0);
			
			\draw[fill=gray!20!white, name path = K] (K1) -- (K2) -- (K3) -- (K4) -- (K5) -- cycle;
			\node[draw,circle,fill=gray!20!white,minimum size=1.4cm] at (muX0) {}; 
			\begin{scope}
				\clip (K1) -- (K2) -- (K3) -- (K4) -- (K5) -- cycle;
				\node[draw=red,circle,fill=gray!60!white,minimum size=1.4cm,ultra thick] at (muX0) {};
			\end{scope}
			\draw[thick] (K1) -- (K2) -- (K3) -- (K4) -- (K5) -- cycle;
			
			\node at (barycentric cs:K1=1,K2=1,K3=1,K4=1,K5=1) {$\atoms^\T\sset$};
			\node[blackdot] at (muX0) {};
			
			\path (muX0) -- ++(-40:.35cm) coordinate (anchorSphere);
			\node[above right=.2 and 1 of anchorSphere] (anchorSpherelabel) {$\atoms^\T\sset \intersec (t\S^{\d-1} + \targetlatpv) = \atoms^\T \sset_{\targetlatpv,t}$};
			\path[<-,shorten <=3pt,>=stealth,red,bend right] (anchorSphere) edge (anchorSpherelabel);
			
			\node[left=1.2 of muX0] (muX0label) {$\targetlatpv$};
			\path[<-,shorten <=3pt,>=stealth,bend right] (muX0) edge (muX0label);
			
		\end{tikzpicture}
	\caption{Illustration of the main proof argument. The key difficulty is to show that $\exloss(\xpv, \target\xpv) > 0$ for all $\xpv \in \sset_{\targetlatpv,t}$ (see red arc). Hence, if $\solu\xpv$ is a minimizer of \protect\eqref{eq:results:bounds:klasso}, we can conclude that $\atoms^\T\solu\xpv \in \R^\d$ must belong to the dark gray intersection $\atoms^\T \sset_{\targetlatpv,<t}$, which simply means that $\lnorm{\atoms^\T\solu\xpv - \targetlatpv} < t$.}
	\label{fig:proofs:local:argument}
\end{figure}
The proof of Theorem~\ref{thm:proofs:results:local} is based on a classical localization idea from learning theory (cf. \cite{mendelson2002improving,bartlett2005local,mendelson2007reconstruction}):
The claim follows if we can show that $\solu\xpv \in \sset_{\targetlatpv,<t}$ for every minimizer $\solu\xpv$ of \eqref{eq:results:bounds:klasso}.
The key step of our proof below is therefore to verify that, with high probability, we have $\exloss(\xpv, \target\xpv) > 0$ uniformly for all $\xpv \in \sset_{\targetlatpv,t}$.
This particularly implies that $\sset_{\targetlatpv,t}$ cannot contain a minimizer $\solu\xpv$ of \eqref{eq:results:bounds:klasso}, since $\exloss(\solu\xpv, \target\xpv) \leq 0$.
Finally, by the convexity of $\lossemp{}$ and $\sset$, it even follows that $\exloss(\xpv, \target\xpv) > 0$ for all $\xpv \in \sset_{\targetlatpv,>t}$. In turn, every minimizer of \eqref{eq:results:bounds:klasso} must belong to $\sset_{\targetlatpv,<t}$, which concludes the argument; see Figure~\ref{fig:proofs:local:argument} for a visualization.
	

\subsubsection*{Step 1: Decomposing the Excess Risk}

We first decompose the excess risk functional into its linear and quadratic part, corresponding to the first and second order Taylor term, respectively:
\begin{align}
	\exloss(\xpv, \target\xpv) &= \lossemp{}(\xpv) - \lossemp{}(\target\xpv) \\*
	&= \tfrac{1}{n} \sum_{i = 1}^n (y_i - \sp{\x_i}{\xpv})^2 - \tfrac{1}{n} \sum_{i = 1}^n (y_i - \sp{\x_i}{\target\xpv})^2 \\
	&= \tfrac{2}{n} \sum_{i = 1}^n (\sp{\x_i}{\target\xpv} - y_i) \sp{\x_i}{\xpv - \target\xpv} + \tfrac{1}{n} \sum_{i = 1}^n \abs[\big]{\sp{\x_i}{\xpv - \target\xpv}}^2 \\
	&= \tfrac{2}{n} \sum_{i = 1}^n \underbrace{(\sp{\atoms\lat_i}{\target\xpv} - y_i)}_{= \sp{\lat_i}{\targetlatpv} - y_i \eqqcolon \xi_i(\targetlatpv)} \sp{\atoms\lat_i}{\xpv - \target\xpv} + \tfrac{1}{n} \sum_{i = 1}^n \abs[\big]{\sp{\atoms\lat_i}{\xpv - \target\xpv}}^2 \\
	&= \underbrace{\tfrac{2}{n} \sum_{i = 1}^n \xi_i(\targetlatpv) \sp{\lat_i}{\atoms^\T\xpv - \targetlatpv}}_{\eqqcolon \multiplterm{\atoms^\T\xpv, \targetlatpv}} + \underbrace{\tfrac{1}{n} \sum_{i = 1}^n \abs[\big]{\sp{\lat_i}{\atoms^\T\xpv - \targetlatpv}}^2}_{\eqqcolon \quadrterm{\atoms^\T\xpv, \targetlatpv}}. \label{eq:proofs:local:excessdecomp}
\end{align}
The factors $\xi_i(\targetlatpv) \coloneqq \sp{\lat_i}{\targetlatpv} - y_i$ can be regarded as \emph{multipliers} that do not depend on $\xpv$, but are \emph{not} independent of $\lat_i$. For that reason, $\multiplterm{\atoms^\T\xpv, \targetlatpv}$ is referred to as the \emph{multiplier term}, whereas $\quadrterm{\atoms^\T\xpv, \targetlatpv}$ is called the \emph{quadratic term}.
Moreover, we emphasize that both terms do actually only depend on $\atoms^\T\xpv$ and $\targetlatpv = \atoms^\T\target\xpv$, implying that $\exloss(\tilde{\xpv}, \target{\tilde{\xpv}}) = \exloss(\xpv, \target{\xpv})$ for all $\tilde{\xpv}, \target{\tilde{\xpv}} \in \R^\p$ with $\atoms^\T\tilde{\xpv} = \atoms^\T\xpv$ and $\atoms^\T\target{\tilde{\xpv}} = \atoms^\T\target\xpv$.

The goal of the following two steps is to establish \emph{uniform} lower bounds for both empirical processes, $\xpv \mapsto \multiplterm{\atoms^\T\xpv, \targetlatpv}$ and $\xpv \mapsto \quadrterm{\atoms^\T\xpv, \targetlatpv}$, on the neighborhood set $\sset_{\targetlatpv,t}$.
Under the hypothesis of \eqref{eq:proofs:results:local:meas} and \eqref{eq:proofs:results:local:implbound}, this eventually leads to the desired positivity of the excess risk in Step~4.

\subsubsection*{Step 2: Bounding the Multiplier Term}

Let us start with the multiplier term $\multiplterm{\cdot, \targetlatpv}$.
For this purpose, we apply the following recent concentration inequality on empirical multiplier processes due to Mendelson \cite{mendelson2016multiplier}, which is based on a sophisticated chaining argument:
\begin{theorem}[\protect{\cite[Thm.~4.4]{mendelson2016multiplier}}]\label{thm:proofs:local:multiplproc}
	Let Assumption~\ref{model:results:observations} be satisfied and let $\ssetalt \subset t \S^{\d-1}$ for some $t > 0$.
	Let $\xi$ be a sub-Gaussian random variable (not necessarily independent of $\lat$) and let $\xi_1, \dots, \xi_n$ be independent copies of $\xi$.\footnote{More precisely, the independent copies of $\xi$ are generated \emph{jointly} with $\lat$ and $y$, i.e., $(\lat_i,y_i,\xi_i)$ is an independent copy of $(\lat,y,\xi)$.}
	Then there exist numerical constants $C_1, C' > 0$ such that for every $u > 0$, the following holds true with probability at least $1 - 2 \exp(-C_1 \cdot u^2) - 2 \exp(- C_1 \cdot n)$:
	\begin{equation}\label{eq:proofs:local:multiplproc:concentr}
		\sup_{\h \in \ssetalt} \abs[\Big]{\Big(\tfrac{1}{n}\sum_{i = 1}^n \xi_i \sp{\lat_i}{\h} \Big) - \mean[\xi \sp{\lat}{\h}]} \leq C' \cdot \subgparam \cdot \normsubg{\xi} \cdot  \frac{\meanwidth{\ssetalt} + u \cdot t}{\sqrt{n}} \ .
	\end{equation}
\end{theorem}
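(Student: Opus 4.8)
Strictly, this is \cite[Thm.~4.4]{mendelson2016multiplier} quoted verbatim, so the formal ``proof'' at this point is just a pointer to that reference; nevertheless, let me record the approach one would take. Write the centred multiplier process as $h \mapsto Z_h \coloneqq \tfrac1n\sum_{i=1}^n \xi_i\sp{\lat_i}{h} - \mean[\xi\sp{\lat}{h}]$, indexed by $h\in\ssetalt$. The first step is purely pointwise: for a fixed $h$, each summand $\xi_i\sp{\lat_i}{h}$ is a product of two sub-Gaussian variables and hence sub-exponential, with $\lnorm{\xi_i\sp{\lat_i}{h}}[\psi_1] \lesssim \normsubg{\xi}\cdot\normsubg{\sp{\lat}{h}} \leq \subgparam\cdot\normsubg{\xi}\cdot\lnorm{h} \leq \subgparam\cdot\normsubg{\xi}\cdot t$, using the sub-Gaussianity of $\lat$ and the fact that $\ssetalt\subset t\S^{\d-1}$. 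Bernstein's inequality for i.i.d.\ sub-exponential sums then gives the pointwise tail $\prob[\abs{Z_h}\geq s] \leq 2\exp\bigl(-c\,n\min\{s^2/(\subgparam\normsubg{\xi}t)^2,\ s/(\subgparam\normsubg{\xi}t)\}\bigr)$.

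The second step is to upgrade this to a bound on $\sup_{h\in\ssetalt}\abs{Z_h}$ by a generic chaining argument run \emph{directly} on $Z_h$ --- crucially, without attempting to decouple the multiplier $\xi$ from $\lat$. Equip $\ssetalt$ with the increment pseudometric induced by $\lat$, which by sub-Gaussianity is dominated by $\subgparam\lnorm{\cdot}$; then the increments $Z_h-Z_{h'}$ exhibit a \emph{mixed tail}, sub-Gaussian at scale $\subgparam\normsubg{\xi}\lnorm{h-h'}/\sqrt n$ and sub-exponential at scale $\subgparam\normsubg{\xi}\lnorm{h-h'}/n$, which is exactly the situation handled by Talagrand's generic-chaining bound for processes with combined sub-Gaussian/sub-exponential increments and its high-probability version. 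The sub-Gaussian part of that bound contributes $\subgparam\normsubg{\xi}\,\gamma_2(\ssetalt,\lnorm{\cdot})/\sqrt n$, and Talagrand's majorizing-measure theorem identifies $\gamma_2(\ssetalt,\lnorm{\cdot}) \asymp \meanwidth{\ssetalt}$; the sub-exponential ($\gamma_1$-type) part is controlled by $\diam(\ssetalt)\leq 2t$ and yields the deviation term, so altogether
\[
	\sup_{h\in\ssetalt}\abs{Z_h} \;\lesssim\; \subgparam\normsubg{\xi}\cdot\frac{\meanwidth{\ssetalt} + u\,t}{\sqrt n} \;+\; \bigl(\text{lower-order } 1/n \text{ terms}\bigr)
\]
with probability at least $1-2\exp(-C_1 u^2)$. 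The residual $1/n$ terms are absorbed into the main term on a ``regularity'' event of probability at least $1-2\exp(-C_1 n)$, on which the relevant empirical quantities (such as $\tfrac1n\sum_i\xi_i^2$ and empirical diameters over $\ssetalt$) stay comparable to their population counterparts; this yields \eqref{eq:proofs:local:multiplproc:concentr} with the stated failure probability.

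The delicate point --- and the reason a one-line symmetrization-plus-contraction argument is not available --- is the possible \emph{dependence} of $\xi$ on $\lat$: one cannot ``peel off'' the multiplier via the contraction principle, so the chaining genuinely has to treat the product $\xi_i\sp{\lat_i}{h}$ as a single process with a two-scale tail. In Mendelson's treatment this is organized through the multiplier inequality based on the monotone rearrangement of the $\abs{\xi_i}$, which bounds $\mean\sup_{h\in\ssetalt}\abs{\tfrac1n\sum_i\xi_i\sp{\lat_i}{h}}$ by an integral of Rademacher-driven empirical widths, followed by a separate concentration step around that expectation. Reproducing the interplay between the rearrangement, the chaining functionals, and the high-probability tail is the main technical obstacle, which is precisely why the paper invokes the result rather than re-deriving it.
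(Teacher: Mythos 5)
Your proposal is correct and matches the paper exactly: the paper states this result as a direct citation of \cite[Thm.~4.4]{mendelson2016multiplier} and provides no proof of its own, which is precisely what you do. Your supplementary sketch of Mendelson's argument (generic chaining on the undecoupled product process with mixed sub-Gaussian/sub-exponential increments, the monotone-rearrangement multiplier inequality, and the impossibility of a contraction-based shortcut due to the dependence of $\xi$ on $\lat$) is an accurate account of why the result is nontrivial and is cited rather than re-derived.
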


The definition of $\multiplterm{\cdot,\targetlatpv}$ implies that we can apply Theorem~\ref{thm:proofs:local:multiplproc} with $\ssetalt = \atoms^\T \sset_{\targetlatpv,t} - \targetlatpv \subset t\S^{\d-1}$ and $\xi = \sp{\lat}{\targetlatpv} - y$.
Accordingly, we have that, with probability at least $1 - 2 \exp(-C_1 \cdot u^2) - 2 \exp(- C_1 \cdot n)$, the following bound holds true for every $\xpv \in \sset_{\targetlatpv,t}$ ($\implies \atoms^\T\xpv - \targetlatpv \in \ssetalt$):\pagebreak
\begin{align}
    \tfrac{1}{2} \cdot \multiplterm{\atoms^\T\xpv, \targetlatpv} &= \tfrac{1}{n} \sum_{i = 1}^n \xi_i(\targetlatpv) \sp{\lat_i}{\atoms^\T\xpv - \targetlatpv} \\*
		&\geq \mean[\xi \sp{\lat}{\atoms^\T\xpv - \targetlatpv}] - C' \cdot \subgparam \cdot \normsubg{\xi} \cdot \frac{\meanwidth{\ssetalt} + u \cdot t}{\sqrt{n}} \\
		&= - t \cdot \Big( \underbrace{\mean[\xi \sp{\lat}{\tfrac{\targetlatpv - \atoms^\T\xpv}{t}}][\Big]}_{\stackrel{(\ast)}{\leq} \modelcovar{\targetlatpv}} {}+{} C' \cdot \subgparam \cdot \underbrace{\normsubg{\xi}}_{= \modeldev{\targetlatpv}} \cdot \frac{\tfrac{1}{t}\meanwidth{\atoms^\T \sset_{\targetlatpv,t} - \targetlatpv} + u}{\sqrt{n}} \Big) \\
		&\stackrel{\mathllap{\atoms^\T \sset_{\targetlatpv,t} = \atoms^\T\sset \intersec (t \S^{\d-1} + \targetlatpv)}}{\geq} 
			- t \cdot \Big( \modelcovar{\targetlatpv} + C' \cdot \subgparam \cdot \modeldev{\targetlatpv} \cdot \frac{\meanwidth[t]{\atoms^\T\sset - \targetlatpv} + u}{\sqrt{n}} \Big) \\
		&\geq
			- t \cdot \max\{1, C'\} \cdot \underbrace{\Big( \modelcovar{\targetlatpv} + \subgparam \cdot \modeldev{\targetlatpv} \cdot \frac{\meanwidth[t]{\atoms^\T\sset - \targetlatpv} + u}{\sqrt{n}} \Big)}_{\eqqcolon t_0},
\end{align}
where $(\ast)$ is a consequence of the Cauchy-Schwarz inequality:
\begin{align}
	\mean[\xi \sp{\lat}{\tfrac{\targetlatpv - \atoms^\T\xpv}{t}}][\Big] &= \mean[(\sp{\lat}{\targetlatpv} - y) \sp{\lat}{\tfrac{\targetlatpv - \atoms^\T\xpv}{t}}][\Big] \\*
	&= \sp[\Big]{\mean[(\sp{\lat}{\targetlatpv} - y)\lat]}{\tfrac{\targetlatpv - \atoms^\T\xpv}{t}} \\
	&\leq \lnorm[\big]{\mean[(\sp{\lat}{\targetlatpv} - y)\lat]} \cdot \underbrace{\lnorm[\Big]{\tfrac{\targetlatpv - \atoms^\T\xpv}{t}}}_{= 1} = \modelcovar{\targetlatpv}.\label{eq:proofs:local:bndmismatchcovar}
\end{align}
Hence, we end up with
\begin{equation}\label{eq:proofs:local:lowerbndmultipl}
	\multiplterm{\atoms^\T\xpv, \targetlatpv} \geq -2 \cdot \max\{1, C'\} \cdot t \cdot t_0 \quad \text{for all $\xpv \in \sset_{\targetlatpv,t}$ \ .}
\end{equation}

\subsubsection*{Step 3: Bounding the Quadratic Term}

The quadratic term $\quadrterm{\cdot, \targetlatpv}$ can be handled by another chaining-based concentration inequality from random matrix theory:
\begin{theorem}[\protect{\cite[Thm.~1.3]{liaw2016randommat}}]\label{thm:proofs:local:quadrproc}
	Let Assumption~\ref{model:results:observations} be satisfied and let $\ssetalt \subset t \S^{\d-1}$ for some $t > 0$.
	Then there exists a numerical constant $C'' > 0$ such that for every $u \geq 0$, the following holds true with probability at least $1 - \exp(- u^2)$:
	\begin{equation}\label{eq:proofs:local:quadrproc:concentr}
		\sup_{\h \in \ssetalt} \abs[\Big]{\Big(\tfrac{1}{n}\sum_{i = 1}^n \abs{\sp{\lat_i}{\h}}^2 \Big)^{1/2} - t} \leq C'' \cdot \subgparam^2 \cdot  \frac{\meanwidth{\ssetalt} + u \cdot t}{\sqrt{n}} \ .
	\end{equation}
\end{theorem}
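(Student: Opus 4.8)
The plan is to recognize the statement as a \emph{matrix deviation inequality} and to establish it by generic chaining, after isolating one genuinely delicate lemma. I would collect the samples into the random matrix $\vec{W} \in \R^{n \times \d}$ whose $i$-th row is $\lat_i^\T$, so that $\tfrac1n\sum_{i=1}^n\abs{\sp{\lat_i}{\h}}^2 = \tfrac1n\lnorm{\vec{W}\h}^2$ and the left-hand side of \eqref{eq:proofs:local:quadrproc:concentr} equals $\tfrac{1}{\sqrt n}\sup_{\h\in\ssetalt}\abs{\lnorm{\vec{W}\h} - \sqrt n\,t}$. Since $\lat$ is isotropic, $\mean[\abs{\sp{\lat}{\h}}^2] = \lnorm{\h}^2 = t^2$ for every $\h\in\ssetalt\subset t\S^{\d-1}$, so $\sqrt n\,t = \sqrt n\,\lnorm{\h}$ is the correct centering and, crucially, is \emph{constant} over $\ssetalt$. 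Introducing the centered process $X_\h \coloneqq \lnorm{\vec{W}\h} - \sqrt n\,\lnorm{\h}$, the goal becomes a high-probability bound on $\tfrac{1}{\sqrt n}\sup_{\h\in\ssetalt}\abs{X_\h}$, which I would split into a uniform fluctuation part $\sup_\h\abs{X_\h - X_{\h_0}}$ handled by chaining and a single anchor term $\abs{X_{\h_0}}$ for a fixed $\h_0\in\ssetalt$. A crude $\eps$-net argument is deliberately avoided, as it would produce the ambient dimension $\sqrt\d$ in place of the Gaussian mean width $\meanwidth{\ssetalt}$.

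The heart of the argument — and the step I expect to be the main obstacle — is the \emph{sub-Gaussian increment inequality}
\[
	\normsubg{X_\h - X_{\h'}} \leq C \cdot \subgparam^2 \cdot \lnorm{\h - \h'}, \qquad \h, \h' \in \R^\d.
\]
The naive route via the reverse triangle inequality, $\abs{\lnorm{\vec{W}\h} - \lnorm{\vec{W}\h'}} \leq \lnorm{\vec{W}(\h - \h')}$, is far too lossy, since the right-hand side concentrates at order $\sqrt n\,\lnorm{\h-\h'}$; the point is that $\lnorm{\vec{W}\h}$ and $\lnorm{\vec{W}\h'}$ are strongly correlated and their fluctuations largely cancel. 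To exploit this I would factor the difference of squares, $\lnorm{\vec{W}\h}^2 - \lnorm{\vec{W}\h'}^2 = \sum_{i=1}^n \sp{\lat_i}{\h-\h'}\sp{\lat_i}{\h+\h'}$, and divide by $\lnorm{\vec{W}\h} + \lnorm{\vec{W}\h'}$. The numerator is a sum of products of two sub-Gaussian variables, hence sub-exponential with $\psi_1$-scale $\lesssim \subgparam^2\lnorm{\h-\h'}\lnorm{\h+\h'}$ per summand, so Bernstein's inequality controls its deviation from the mean $n\sp{\h-\h'}{\h+\h'}$. The denominator is bounded below by $\gtrsim\sqrt n\,\lnorm{\h+\h'}$ on a high-probability event, a consequence of the single-vector concentration of the norm $\normsubg{\lnorm{\vec{W}\h} - \sqrt n\,\lnorm\h}\lesssim\subgparam^2\lnorm\h$. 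Combining these, and carefully matching the random denominator against the deterministic centering $\sqrt n(\lnorm\h - \lnorm{\h'})$, yields a sub-Gaussian tail of scale $\subgparam^2\lnorm{\h-\h'}$. The delicate bookkeeping lies exactly in dividing by a random quantity and in reconciling the sub-exponential and sub-Gaussian regimes of the numerator.

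With the increment inequality in hand, the uniform fluctuation bound follows from the tail form of the generic chaining / majorizing-measure theorem: for a process with $\normsubg{X_\h - X_{\h'}} \leq K\,\lnorm{\h-\h'}$ one has, with probability at least $1 - e^{-u^2}$,
\[
	\sup_{\h \in \ssetalt}\abs{X_\h - X_{\h_0}} \leq C\,K\,\big(\gamma_2(\ssetalt) + u\cdot\diam(\ssetalt)\big),
\]
where $K = C\subgparam^2$. Here I would invoke Talagrand's theorem to replace $\gamma_2(\ssetalt)$ (with respect to the Euclidean metric) by the Gaussian mean width $\meanwidth{\ssetalt}$ up to absolute constants, and use $\diam(\ssetalt) \leq 2t$ since $\ssetalt\subset t\S^{\d-1}$. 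This produces the fluctuation bound $\lesssim \subgparam^2\big(\meanwidth{\ssetalt} + u\,t\big)$.

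It remains to control the anchor $\abs{X_{\h_0}} = \abs{\lnorm{\vec{W}\h_0} - \sqrt n\,\lnorm{\h_0}}$, which is precisely the single-vector concentration of the norm used above: a Bernstein estimate for $\tfrac1n\lnorm{\vec{W}\h_0}^2 = \tfrac1n\sum_i\sp{\lat_i}{\h_0}^2$ gives $\abs{X_{\h_0}} \lesssim \subgparam^2\,t\,u$ with probability at least $1 - 2e^{-cu^2}$. A union bound merges the anchor and fluctuation events, and after dividing by $\sqrt n$ one obtains $\sup_{\h\in\ssetalt}\abs{(\tfrac1n\sum_i\abs{\sp{\lat_i}{\h}}^2)^{1/2} - t} \lesssim \subgparam^2\big(\meanwidth{\ssetalt} + u\,t\big)/\sqrt n$. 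Finally, rescaling $u$ and enlarging the numerical constant $C''$ absorbs the several tail contributions into the single clean factor $\exp(-u^2)$ claimed in the statement, which completes the proof.
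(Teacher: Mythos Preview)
The paper does not prove this theorem; it is quoted verbatim as \cite[Thm.~1.3]{liaw2016randommat} and used as a black box in Step~3 of the proof of Theorem~\ref{thm:proofs:results:local}. Your outline is nonetheless correct and in fact reproduces the strategy of the cited reference: the key sub-Gaussian increment lemma $\normsubg{X_\h - X_{\h'}} \lesssim \subgparam^2 \lnorm{\h-\h'}$ is exactly their main technical ingredient, and generic chaining together with the majorizing-measure comparison $\gamma_2(\ssetalt) \asymp \meanwidth{\ssetalt}$ is how they conclude.
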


Similarly to Step~2, we apply Theorem~\ref{thm:proofs:local:quadrproc} with $\ssetalt = \atoms^\T \sset_{\targetlatpv,t} - \targetlatpv \subset t\S^{\d-1}$ and $u = \sqrt{C_1 \cdot \subgparam^{-4} \cdot n}$. Hence, with probability at least $1 - \exp(- C_1 \cdot \subgparam^{-4} \cdot n)$, the following holds true for every $\xpv \in \sset_{\targetlatpv,t}$:
\begin{align}
	\sqrt{\quadrterm{\atoms^\T\xpv, \targetlatpv}} &= \Big( \tfrac{1}{n} \sum_{i = 1}^n \abs[\big]{\sp{\lat_i}{\atoms^\T\xpv - \targetlatpv}}^2 \Big)^{1/2} \\*
	&\geq t - C'' \cdot \subgparam^2 \cdot \frac{\meanwidth{\ssetalt} + u \cdot t}{\sqrt{n}} \\
	&= t \cdot \Big( 1 - C'' \cdot \frac{\subgparam^2 \cdot \tfrac{1}{t}\meanwidth{\atoms^\T \sset_{\targetlatpv,t} - \targetlatpv}}{\sqrt{n}} {}-{} C'' \cdot \frac{\subgparam^2 \cdot u}{\sqrt{n}} \Big) \\
	&= t \cdot \Big( 1 - C'' \cdot \underbrace{\frac{\subgparam^2 \cdot \meanwidth[t]{\atoms^\T \sset - \targetlatpv}}{\sqrt{n}}}_{\stackrel{\eqref{eq:proofs:results:local:meas}}{\leq} 1 / \sqrt{C_2}} {} - {} C'' \sqrt{C_1} \Big) \\
	&\geq t \cdot \underbrace{(1 - C'' / \sqrt{C_2} - C'' \sqrt{C_1})}_{\eqqcolon C_0}.
\end{align}
Finally, we adjust the numerical constants $C_1$ and $C_2$ such that $C_0 > 0$, implying that
\begin{equation}\label{eq:proofs:local:lowerbndquadr}
	\quadrterm{\atoms^\T\xpv, \targetlatpv} \geq C_0^2 \cdot t^2 \quad \text{for all $\xpv \in \sset_{\targetlatpv,t}$ \ .}
\end{equation}

\subsubsection*{Step 4: Bounding the Excess Risk and Conclusion}

We now assume that the events of Step~2 and Step~3 have indeed occurred (with probability at least $1 - 2\exp(- C_1 \cdot u^2) - 2\exp(- C_1 \cdot n) - \exp(- C_1 \cdot \subgparam^{-4} \cdot n)$). 
Hence, the lower bounds of \eqref{eq:proofs:local:lowerbndmultipl} and \eqref{eq:proofs:local:lowerbndquadr} yield
\begin{align}
	\exloss(\xpv, \target\xpv) &= \quadrterm{\atoms^\T\xpv, \targetlatpv} + \multiplterm{\atoms^\T\xpv, \targetlatpv} \\*
	&\geq C_0^2 \cdot t^2 - 2 \cdot \max\{1, C'\} \cdot t \cdot t_0 \\
	&= t \cdot \underbrace{(C_0^2 \cdot t - 2 \cdot \max\{1, C'\} \cdot t_0)}_{\stackrel{\eqref{eq:proofs:results:local:implbound}}{>} 0} > 0 \quad \text{for all $\xpv \in \sset_{\targetlatpv,t}$ \ ,} \label{eq:proofs:local:excesspos}
\end{align}
where $C_3$ needs to be appropriately adjusted in \eqref{eq:proofs:results:local:implbound}, depending on the numerical constants $C_0$ and $C'$.

Since $\exloss(\solu\xpv, \target\xpv) \leq 0$ for every minimizer $\solu\xpv$ of \eqref{eq:results:bounds:klasso}, we can conclude that $\solu\xpv$ cannot belong to $\sset_{\targetlatpv,t}$.
Next, on the same event, assume that there is a minimizer $\solu\xpv$ of \eqref{eq:results:bounds:klasso} with $\solu\xpv \in \sset_{\targetlatpv,>t}$. By the convexity of $\sset$, it is not hard to see that the line segment $\lambda \mapsto \operatorname{Seg}(\lambda) \coloneqq \lambda \target\xpv + (1 - \lambda)\solu\xpv$ intersects $\sset_{\targetlatpv,t}$ at a point $\xpv' \coloneqq \lambda' \target\xpv + (1 - \lambda')\solu\xpv$ with $\lambda' \in \intvcl{0}{1}$. Thus,
\begin{align}
	&\exloss(\operatorname{Seg}(0), \target\xpv) = \exloss(\solu\xpv, \target\xpv) \leq 0, \\*
	&\exloss(\operatorname{Seg}(\lambda'), \target\xpv) = \exloss(\xpv', \target\xpv) \stackrel{\eqref{eq:proofs:local:excesspos}}{>} 0, \\*
	&\exloss(\operatorname{Seg}(1), \target\xpv) = \exloss(\target\xpv, \target\xpv) = 0, 
\end{align}
which contradicts the convexity of the mapping $\lambda \mapsto \exloss(\operatorname{Seg}(\lambda), \target\xpv)$.
In turn, every minimizer $\solu\xpv$ of \eqref{eq:results:bounds:klasso} must belong to $\sset_{\targetlatpv,<t}$, which implies the desired error bound $\lnorm{\atoms^\T\solu\xpv - \targetlatpv} < t$. \hfill \qedsymbol

\begin{remark}[Related approaches]\label{rmk:proofs:local:techniques}
	The above proof strategy loosely follows the learning framework of Mendelson from \cite{mendelson2014learning,mendelson2014learninggeneral}.
	Based on a similar decomposition of the excess risk as in \eqref{eq:proofs:local:excessdecomp}, the key idea of Mendelson's approach is to show that the quadratic term dominates the multiplier term except for a small neighborhood of the expected risk minimizer.
	Thus, according to the convexity argument of Step~4, one can conclude that the empirical risk minimizer belongs to this small neighborhood, which in turn yields an error bound.
	
	While we are able to invoke a concentration result for the quadratic term due to the sub-Gaussianity of $\lat$, a major concern of \cite{mendelson2014learning,mendelson2014learninggeneral} is that one can already establish lower bounds under a so-called small ball condition.
	This technique is referred to as \emph{Mendelson's small ball method} and allows for proving learning guarantees under much weaker moment assumptions on the input data.
	
	On the other hand, as pointed out in Subsection~\ref{subsec:literature:statlearn}, a key difference to \cite{mendelson2014learning,mendelson2014learninggeneral} is that we do not restrict to the expected risk minimizer as target vector but permit any choice $\targetlatpv \in \atoms^\T\sset$.
	This particularly explains why the multiplier term $\multiplterm{\cdot, \targetlatpv}$ needs to be treated somewhat differently in our setup.
	Apart from that, it is worth mentioning that the above analysis also improves the related approaches of \cite{plan2015lasso,genzel2016estimation}. These works on \emph{Gaussian} single-index models handle the multiplier term by a simpler argument based on Markov's inequality, which eventually leads to a more pessimistic probability of success.
\end{remark}

\subsection{Proof of Proposition~\ref{prop:results:setup:modeldecomp}}
\label{subsec:proofs:modeldecomp}

\begin{proof}[Proof of Proposition~\ref{prop:results:setup:modeldecomp}]
	Since the covariance matrix of $\x$ is positive semi-definite and of rank $\d$, there exists $\U \in \R^{\p \times \d}$ with orthonormal columns and a diagonal matrix $\D \in \R^{\d \times \d}$ with positive entries such that $\mean[\x\x^\T] = \U \D \D \U^\T$.
	We set $\lat \coloneqq \D^{-1}\U^\T \x$ and $\atoms \coloneqq \U \D \in \R^{\p \times \d}$.
	
	It is not hard to see that $\lat$ is a centered isotropic random vector in $\R^\d$. Indeed, we have $\mean[\lat] = \D^{-1}\U^\T \mean[\x] = \vnull$ and
	\begin{equation}
		\mean[\lat \lat^\T] = \D^{-1}\U^\T \mean[\x \x^\T] \U \D^{-1} = \D^{-1}\U^\T \U \D \D \U^\T \U \D^{-1} = \I{\d} \ .
	\end{equation}
	Finally, let us compute the covariance matrix of $\x - \atoms\lat$:
	\begin{align}
		&\mean[(\x - \atoms\lat)(\x - \atoms\lat)^\T] = \mean[\x\x^\T]  + \mean[\atoms\lat\lat^\T\atoms^\T] - \mean[\x \lat^\T\atoms^\T] - \mean[\atoms\lat\x^\T] \\*
		={} & \U \D \D \U^\T + \U \D \D \U^\T - \mean[\x\x^\T] \U \D^{-1} \D \U^\T - \U\D \D^{-1} \U^\T \mean[\x\x^\T] \\
		={} & 2\U \D \D \U^\T - \U \D \D \U^\T \U \D^{-1} \D \U^\T - \U\D \D^{-1} \U^\T \U \D \D \U^\T \\
		={} & 2\U \D \D \U^\T - 2\U \D \D \U^\T = \vnull.
	\end{align}
	Hence, we conclude that $\x = \atoms\lat$ almost surely.
\end{proof}
	


\section*{Acknowledgments}
{\smaller
The authors thank Chandrajit Bajaj, Peter Jung, Maximillian März, and Alexander Stollenwerk for fruitful discussions, in particular for pointing out potential applications.
M.G.\ is supported by the European Commission Project DEDALE (contract no. 665044) within the H2020 Framework Program and by the Bundesministerium für Bildung und Forschung
(BMBF) through the Berliner Zentrum for Machine Learning (BZML), Project AP4.
G.K. acknowledges partial support by the Bundesministerium für Bildung und Forschung (BMBF) through the Berliner Zentrum for Machine Learning (BZML), Project AP4, RTG DAEDALUS (RTG 2433), Projects P1 and P3, RTG BIOQIC (RTG 2260), Projects P4 and P9, and by the Berlin Mathematics Research Center MATH+, Projects EF1-1 and EF1-4.

}

\renewcommand*{\bibfont}{\smaller}
\printbibliography[heading=bibintoc]

\newpage
\listoftodos

\end{document}